\newtheorem{theorem}{Theorem}[section]
\newtheorem{proposition}[theorem]{Proposition}
\newcommand{\dist}{\operatorname{dist}}
\newcommand{\vol}{\operatorname{Vol}}
\newcommand{\rot}{\operatorname{rot}}
\theoremstyle{definition} 
\newtheorem{mydef}{Definition}[section]
\newtheorem{myrem}{Remark}[section]
\definecolor{ao}{rgb}{0.0, 0.5, 0.0}
\definecolor{tuscanred}{rgb}{0.51, 0.21, 0.21}
\definecolor{babypink}{rgb}{0.96, 0.76, 0.76}
\definecolor{candypink}{rgb}{1.0, 0.67, 0.79}
\definecolor{darkpastelgreen}{rgb}{0.01, 0.75, 0.24}
\definecolor{red(pigment)}{rgb}{0.93, 0.11, 0.14}
\definecolor{unmellowyellow}{rgb}{1.0, 1.0, 0.5}
\definecolor{turquoiseblue}{rgb}{0.0, 1.0, 0.94}
\definecolor{blue(pigment)}{rgb}{0.23, 0.3, 0.92}
\begin{document}
	
	\title{Robustness in the Poisson Boolean model with convex grains}
	
	\author{Peter Gracar, Marilyn Korfhage and Peter M\"orters}

    \date{}
	
	\maketitle

 \vspace{-1cm}
	\begin{abstract}\noindent\textbf{Abstract:} We study the Poisson Boolean model where the grains are random convex bodies with a rotation-invariant distribution. We say that a grain distribution is \emph{dense} if the union of the grains covers the entire space
 and \emph{robust} if the union of the grains has an unbounded connected component irrespective of the intensity of the underlying Poisson process. If the grains are balls of random radius, then density and robustness are equivalent, but in general this is not the case. We show that in any dimension $d\ge2$ there are grain distributions that are robust but not dense, and give general criteria for density, robustness and non-robustness of a grain distribution. We give examples which show that our criteria are sharp in many instances.
	\end{abstract}
	
	\renewcommand{\contentsname}{Contents}
	\tableofcontents
	
	%
	
	%
	\section{Introduction and statement of the results} 
	Take a homogeneous Poisson point process $\mathscr{P}$ in $\mathbb{R}^d$ of dimension $d\ge2$ with positive intensity~$u$ and mark every point $x$ of $\mathscr{P}$ with an independent copy $\widetilde{C}_x$ of a  random convex body $C\subset \mathbb{R}^d$  such that
 the distribution of $C$ is rotation invariant about the origin. We will write $\vol(C)$ for the Lebesgue measure, i.e.\ the volume of the convex body $C$. We denote 
	\begin{equation*}
		C_x:= x+\widetilde{C}_x = \{y\in\mathbb{R}^d \colon y-x \in \widetilde{C}_x\},
	\end{equation*}
	and following~\cite{Hall} let
	$\mathscr{C}$ be the union of the convex bodies $(C_x)_{x\in\mathscr{P}}$, i.e.
		\begin{equation*}
			\mathscr{C}:= \bigcup_{x\in\mathscr{P}} C_x.
		\end{equation*}
	The set $\mathscr{C}$ is called the \emph{Poisson Boolean model} with convex grain $C$ and intensity~$u$. There is also a natural graph $\mathscr G=(\mathscr P, \mathscr E)$ associated with this model, where the points of the Poisson process $\mathscr P$ constitute the vertex set and there is an (unoriented) edge connecting distinct points $x,y\in\mathscr P$ if and only if 
	$C_x \cap C_y\not=\emptyset$. A survey on Boolean models can be found at~\cite{Boolean} and recent or important works on percolation 
 problems for the Boolean model with general convex grains are~\cite{GouereLab, Hall, Roy} and~\cite{Hilario, TU}.
	\medskip
 
	There is little interest in the Boolean model if $\mathscr C=\mathbb R^d$ and we call the grain distribution \emph{dense} if this is the case almost surely, for any Poisson intensity $u>0$. Otherwise it is called \emph{sparse}. We now give a few equivalent characterisations of a grain distribution being sparse. 
 Define
	\begin{itemize}
		\item $M_0$ as the number of grains containing the origin,
		\begin{equation*}
			M_0 := \sum\limits_{x\in\mathscr{P}} \mathbbm{1}_{0\in C_x}.
		\end{equation*}
		\item $N_x$ as the degree of the vertex $x\in \mathscr{P}$ in the graph~$\mathscr G$,
		\begin{equation*}
			N_x := \sum\limits_{\substack{y\in\mathscr{P} \\ y\neq x}}  \mathbbm{1}_{C_x \cap C_y \neq \emptyset} 
		\end{equation*}
		\item $N_A$ as the degree of the set $A\in\mathcal{B}(\mathbb{R}^d)$,
		\begin{equation*}
			N_A:=\sum\limits_{x\in\mathscr{P}} \mathbbm{1}_{A \cap C_x \neq \emptyset}.
		\end{equation*}
		With that we get $N_x =N_{C_x}$ for $x\in\mathscr{P}$.
	\end{itemize}

	\begin{proposition}\label{one}
	Let $\mathscr{P}$ be a homogeneous Poisson point process in $\mathbb R^d$ and mark the points with independent random convex bodies $C\subset \mathbb{R}^d$ containing 
 a ball of fixed radius. 
 Let $\mathscr{P}_0$ be the Palm version of the marked Poisson point process, 
 and denote its law by $\mathbb{P}_0$ and by $\mathbb{E}_0$ the corresponding expectation.  The statement
 \begin{equation}\label{sparse}
 \mathbb{E} [\vol(C)]<\infty
 \end{equation}
 and the following statements are equivalent:\\[3mm]
	\begin{tabular}{lll}
	(a) $\mathbb{P} (\mathscr{C}=\mathbb{R}^d)<1$ 
	\phantom{gobldigobl}&
(b) $\mathbb{P} (\mathscr{C}=\mathbb{R}^d)=0$\phantom{gobldigobl} &
		(c) $\mathbb{P} (0\in \mathscr{C})<1$ \\[3mm]
		(d) $\mathbb{E}_0 [N_{0}]<\infty$ & 
		(e) $\mathbb{P}_0 (N_{0} = \infty)<1$ 
		&	(f) $\mathbb{P}_0 (N_{0} = \infty)=0$ \\[3mm]
		(g) $\mathbb{E} [M_0]<\infty$& (h) $\mathbb{P} (M_0 = \infty)<1$ 
		 &(i) $\mathbb{P} (M_0 = \infty)=0$
		\\[3mm]
	\end{tabular}
	\end{proposition}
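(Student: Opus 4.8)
The plan is to reduce all of (a)--(i) to the single condition \eqref{sparse}, exploiting that the counting variables in question are, thanks to the thinning property of Poisson processes, Poisson random variables (possibly equal to $+\infty$) whose parameter is an explicit multiple of $\mathbb E[\vol(C)]$, or else are controlled by such. I would treat three blocks of statements separately --- (c), (g), (h), (i); then (a), (b); then (d), (e), (f) --- each shown equivalent to \eqref{sparse}; the ball-containment hypothesis enters only in the second and third blocks. Throughout, $r>0$ denotes the fixed radius and $\kappa_d:=\vol(B(0,1))$.

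For the first block, note that $\{x\in\mathscr P:\,0\in C_x\}$ is an independent thinning of $\mathscr P$, a point $x$ being retained with probability $\mathbb P(-x\in\widetilde C)$; hence it is a Poisson process and $M_0$ is Poisson distributed with parameter $u\int_{\mathbb R^d}\mathbb P(-x\in\widetilde C)\,\mathrm dx=u\,\mathbb E[\vol(C)]$, to be read as $+\infty$ (with $M_0=\infty$ a.s.) when $\mathbb E[\vol(C)]=\infty$. Thus $\mathbb E[M_0]=u\,\mathbb E[\vol(C)]$, giving \eqref{sparse}$\Leftrightarrow$(g); the event $\{M_0=\infty\}$ has probability $0$ or $1$ according as $\mathbb E[\vol(C)]$ is finite or not, giving \eqref{sparse}$\Leftrightarrow$(h)$\Leftrightarrow$(i); and since $\{0\in\mathscr C\}=\{M_0\ge 1\}$ with $\mathbb P(M_0\ge1)=1-e^{-u\mathbb E[\vol(C)]}$, also \eqref{sparse}$\Leftrightarrow$(c). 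For the second block, if \eqref{sparse} holds then by stationarity $\mathbb E[\vol(\mathbb R^d\setminus\mathscr C)]=\int_{\mathbb R^d}\mathbb P(y\notin\mathscr C)\,\mathrm dy=\int_{\mathbb R^d}e^{-u\mathbb E[\vol(C)]}\,\mathrm dy=\infty$, so $\mathscr C\neq\mathbb R^d$ a.s., which is (b), and trivially (b)$\Rightarrow$(a). Conversely I would show that \eqref{sparse} failing implies $\mathscr C=\mathbb R^d$ a.s.: pass to the eroded grains $\widetilde C_x\ominus B(0,r/2)$, which are convex bodies satisfying $\widetilde C_x\ominus B(0,r/2)\supseteq\tfrac12\widetilde C_x+\tfrac12 p_x$ whenever $B(p_x,r)\subseteq\widetilde C_x$, hence of volume $\ge 2^{-d}\vol(\widetilde C_x)$; so the eroded Boolean model $\mathscr C^{\flat}:=\bigcup_{x\in\mathscr P}\big(x+(\widetilde C_x\ominus B(0,r/2))\big)$ again has infinite expected grain volume, whence by the argument of the first block $\vol(\mathbb R^d\setminus\mathscr C^{\flat})=0$ a.s. and $\mathscr C^{\flat}$ is dense a.s.; and since $\big(\widetilde C_x\ominus B(0,r/2)\big)\oplus B(0,r/2)\subseteq\widetilde C_x$ we get $\mathscr C\supseteq\mathscr C^{\flat}\oplus B(0,r/2)=\mathbb R^d$ a.s., the $r/2$-neighbourhood of a dense set being all of $\mathbb R^d$.

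For the third block I would invoke the Mecke--Slivnyak theorem: under $\mathbb P_0$ the configuration is $\mathscr P$ together with an extra point at the origin carrying an independent mark $\widetilde C_0\sim C$, and $N_0$ counts the $x\in\mathscr P$ with $C_x\cap\widetilde C_0\neq\emptyset$. Conditioning on $\widetilde C_0=K$, the thinning argument gives that $N_0$ is Poisson with parameter $\mu_K:=u\,\mathbb E\big[\vol(K\oplus(-\widetilde C))\big]$. Since $K\oplus(-L)$ contains a translate of $L$, $\vol(K\oplus(-L))\ge\vol(L)$, so if \eqref{sparse} fails then $\mu_K=\infty$ for a.e.\ $K$ and $\mathbb P_0(N_0=\infty)=1$, i.e.\ (e) fails; with the trivial (d)$\Rightarrow$(f)$\Rightarrow$(e) this yields (e)$\Rightarrow$\eqref{sparse}. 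The remaining implication \eqref{sparse}$\Rightarrow$(d) reduces to showing $\mathbb E\big[\vol(C\oplus(-C'))\big]<\infty$ for independent copies $C,C'$, and for this I would prove the Rogers--Shephard--type bound
\begin{equation*}
\vol\big(K\oplus(-L)\big)\;\le\;\binom{2d}{d}\,\frac{\vol(K)\,\vol(L)}{\kappa_d\,r^{d}}\qquad\text{for convex }K,L\text{ each containing a ball of radius }r.
\end{equation*}
Its proof: put $M:=K\oplus(-L)$ and $f(w):=\vol\big(K\cap(w+L)\big)$; then Brunn--Minkowski shows $f^{1/d}$ is concave on $M$ (since $K\cap(w_\lambda+L)\supseteq(1-\lambda)(K\cap(w_0+L))+\lambda(K\cap(w_1+L))$), $f$ vanishes on $\partial M$, and $\int_M f\,\mathrm dw=\vol(K)\vol(L)$ by Fubini, so the Rogers--Shephard inequality in the form $\int_M f\ge\binom{2d}{d}^{-1}\vol(M)\max_M f$ applies; combining it with $\max_M f\ge\kappa_d r^{d}$ (choose $w$ so that inscribed balls of radius $r$ in $K$ and in $w+L$ coincide) gives the bound. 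With it, $\mathbb E_0[N_0]=u\,\mathbb E[\vol(C\oplus(-C'))]\le u\binom{2d}{d}(\kappa_d r^{d})^{-1}\mathbb E[\vol(C)]^2<\infty$ by independence, which is (d).

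The main obstacle is this last geometric inequality. The naive containment $K\oplus(-L)\subseteq B(0,\operatorname{diam}K+\operatorname{diam}L)$ only yields $\vol(K\oplus(-L))\lesssim(\operatorname{diam}K+\operatorname{diam}L)^{d}$, which is useless because under \eqref{sparse} the diameters need not have a finite $d$-th moment; the point of the Rogers--Shephard estimate is that it is merely quadratic in the volumes, and the ball-containment is precisely what keeps the overlap denominator $\max_w\vol(K\cap(w+L))$ bounded away from $0$. A secondary point needing care is upgrading ``$\vol(\mathbb R^d\setminus\mathscr C)=0$ a.s.'' to ``$\mathscr C=\mathbb R^d$ a.s.'' in the second block, for which the erosion argument is exactly designed; measurability of the events $\{\mathscr C=\mathbb R^d\}$ and $\{0\in\mathscr C\}$ is standard and I would not dwell on it.
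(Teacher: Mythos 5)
Your proof is correct and, for the hardest implication, takes a genuinely different route from the paper's. Blocks~1 and~2 are essentially the same as in the paper (apart from cosmetic differences: the paper proves $\neg(1)\Rightarrow\neg(a)$ by passing to the scaled grains $\eta C$, $0<\eta<1$, rather than the eroded grains $\widetilde C\ominus B(0,r/2)$, and it reads $(1)\Leftrightarrow(a)$ off of Hall's result rather than spelling out the Fubini identity; also, when you write that $\mathbb E[\vol(\mathbb R^d\setminus\mathscr C)]=\infty$ "gives $\mathscr C\neq\mathbb R^d$ a.s.", you are tacitly invoking ergodicity to upgrade $\mathbb P(\mathscr C=\mathbb R^d)<1$ to $=0$, which the paper states explicitly). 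The real divergence is in the implication $(1)\Rightarrow(d)$. The paper's argument is geometric in a different way: it encloses each grain $C_x$ in a rectangle $R_x$ built from the successive diameters, proves $\vol(R)\le 2^d d!\,\vol(C)$, and then bounds $\mathbb E_0[\tilde N_0]$ by covering $R_0$ with $\epsilon$-balls centred on the lattice $(\epsilon/\sqrt d)\mathbb Z^d$ and invoking a lattice-point count (Freyer--Lucas) to control $\#\big((\epsilon/\sqrt d)\mathbb Z^d\cap R_0\big)\lesssim\vol(R_0)$; independence then splits the expectation into a product of two first moments of the volume. Your argument instead computes $\mathbb E_0[N_0]=u\,\mathbb E[\vol(C\oplus(-C'))]$ exactly and controls the Minkowski sum by a Rogers--Shephard-type inequality $\vol(K\oplus(-L))\le\binom{2d}{d}\vol(K)\vol(L)/(\kappa_d r^d)$, which you prove via the concavity of $w\mapsto\vol(K\cap(w+L))^{1/d}$ together with the elementary cone estimate $\int_M g^d\ge\binom{2d}{d}^{-1}\vol(M)\max_M g^d$. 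Both routes deliver the same quadratic bound $\mathbb E_0[N_0]\lesssim_{d,r}\mathbb E[\vol(C)]^2$, and both use the fixed inscribed ball in an essential way (in the paper to normalize the lattice count and the rectangle enclosure; in yours to keep $\max_w\vol(K\cap(w+L))\ge\kappa_d r^d$). Your version sidesteps both the rectangle comparison $\vol(R)\le 2^d d!\,\vol(C)$ and the external citation for the lattice-point count, at the price of proving the Rogers--Shephard variant; it also avoids the paper's initial appeal to the mapping theorem to recentre the grains, since you allow the inscribed ball to sit anywhere in the grain.
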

	\textit{Remark:} {Statement \eqref{sparse} is only about the grain distribution and therefore does not depend on the Poisson intensity $u$.
 Hence all other statements are independent of $u$ as well. The statements $(a), (b)$ and $(c)$ are about the random sets $\mathscr{C}$, the statements 
 $(g), (h)$ and $(i)$ about the covering and $(d), (e), (f)$ about the corresponding random graph. 
	Recall that the grain distribution is sparse if one, and hence all, of the conditions in Proposition~\ref{one} hold for one, and hence all, values of  $u>0$.}
 \pagebreak[3]
	\medskip

 We say that the grain distribution is \emph{robust} if, for all $u>0$, the set $\mathscr{C}$ has an unbounded component, i.e.\ a component of infinite volume. 
 This is easily seen to be equivalent to the fact that,  for every intensity $u>0$, the graph $\mathscr G$ percolates, or in the Palm version there is a positive probability that there exists an infinite self-avoiding path in $\mathscr G$ starting at the origin. Conversely, the grain distribution is \emph{non-robust} if there exists $u_c>0$ such that for all $0<u<u_c$ every component of $\mathscr{C}$ is bounded.
 \pagebreak[3]\medskip
 
 In the case that $C$ is a centred ball of random radius, Gou\'er\'e~\cite{G} has shown that non-robustness is equivalent to sparseness , i.e. to the statements in Proposition~\ref{one}. This is however not true in the case of general convex grains. Teixeira and Ungaretti~\cite{TU} have shown that for grains in $\mathbb R^2$  which are ellipses with a major axis of heavy tailed random length with index $-2<\alpha<-1$ and a minor axis of unit length, for every $u>0$, we have $\mathscr{C}\not=\mathbb{R}^2$ even though  $\mathscr{C}$ has an unbounded component, almost surely.\smallskip
 
 Our main result gives criteria for robustness and non-robustness of convex grains $C$ under a relatively weak regularity assumption. To formulate these criteria we need to define a decreasing sequence of diameters. 
\bigskip

	\textbf{Definition:} Let $K\subset\mathbb R^d$ be a convex body, i.e. a compact convex set with nonempty interior. The \emph{first diameter}, or just \emph{diameter} of $K$, is defined as 
    \begin{equation*}
         D_K^{_{(1)}}:= \operatorname{diam}(K) = \max\{|x-y|\,:\, x,y \in K\}.
    \end{equation*}
    Let $p^{_{(1)}}_{K}$ be the orientation of $D_K^{_{(1)}}$, i.e. 
    {$p^{_{(1)}}_K=\tfrac{x-y}{|x-y|}$  }where $x,y$ are any (measurable) choice of maximizers in the definition of the diameter
    $D_K^{_{(1)}}$. We define the \emph{second diameter} as 
    \begin{equation*}  D_K^{_{(2)}}:=\operatorname{diam}\Bigl(P_{H_{p^{_{(1)}}_K}}(K)\Bigr), 
    \end{equation*} 
    where $P_H(B)$ is the orthogonal projection of $B\subset \mathbb{R}^d$ onto the linear subspace $H\subset\mathbb R^d$, and where \smash{$H_{p^{_{(1)}}_K}$} is the hyperplane perpendicular to $p^{_{(1)}}_{K}$. 
    Denote the orientation of $D_K^{_{(2)}}$ by $p^{_{(2)}}_{K}\!\in H_{p^{_{(1)}}_K}$ and let \smash{$H_{p^{_{(2)}}_K}$} be the hyperplane in \smash{$ H_{p^{_{(1)}}_K}$} perpendicular to $p^{_{(2)}}_{K}$.\medskip
    
    Iterating this procedure, given \smash{$H_{p^{_{(i)}}_K}$} for some $2\leq i <d$, the $(i+1)^{st}$ diameter is 
    \begin{equation*}
        D_K^{_{(i+1)}} := \operatorname{diam} \Bigl( P_{H_{p^{{(i)}}_K}}(K)\Bigr)
    \end{equation*}
    and we denote by $p^{_{(i+1)}}_K$ its orientation.
    By construction we have $D_K^{_{(i+1)}}\leq D_K^{_{(i)}}$ for all $i\in\{1,\dots,d-1\}$. 
    Although the sequence of diameters $D_K^{_{(1)}}, \ldots, D_K^{_{(d)}}$
    thus defined involves the non-unique choice of orientations, our results do not depend on any of these choices.%
\medskip
    
    We assume for our grain distribution that the tails of the random variables $D_C^{_{(i)}}$ are regularly varying with index $-\alpha_i$, i.e.
	\begin{equation*}
	    \lim\limits_{r \rightarrow \infty} \frac{\mathbb{P}(D_C^{_{(i)}}\geq cr)} {\mathbb{P}(D_C^{_{(i)}}\geq r)} = c^{-\alpha_i} \text{ for all $c>1$.}
	\end{equation*}   
We include the case of bounded diameters $D_C^{_{(i)}}$, in which case we put $\alpha_i=\infty$. Note that, by definition, $\alpha_1\leq \alpha_2 \leq \cdots\leq \alpha_d$.
    Our main theorem gives sufficient conditions for a grain distribution to be dense, robust  or non-robust. 
    \begin{theorem}\label{two}
        In the Poisson Boolean model given by a $d$-dimensional random convex body $C$ with rotation invariant distribution, containing a ball of fixed radius such that, for all $1\leq k\leq d$, the $k$-th diameter $D^{_{(k)}}_C$ has regularly varying tail with index $-\alpha_k$, we have that
        \begin{enumerate}[(a)]
            \item the grain distribution is \emph{robust} if 
            there exists $1\leq k \leq d$ such that $\alpha_k < \min\{2k,d\}$;
            \item the grain distribution is \emph{non-robust} if $\vol(C)\in \mathcal{L}^2$ and $\alpha_k > 2k $ for every $1\leq k\leq d$, or if $D^{_{(1)}}_C \in \mathcal{L}^d$.           
        \end{enumerate}
    \end{theorem}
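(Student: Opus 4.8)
\medskip\noindent\textbf{Outline of proof.} The plan is to treat the two parts by entirely different mechanisms.

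\emph{Part (a).} Fix $k$ with $\alpha_k<\min\{2k,d\}$. Since robustness is required for every $u>0$ and, by ergodicity, it suffices to produce a positive probability of an unbounded component, the plan is a renormalisation argument at one large scale $R=R(u)$. Call a grain \emph{$k$-large} if $D_C^{_{(k)}}\ge R$; then its first $k$ successive diameters are all at least $R$, the $k$-plane spanned by its first $k$ principal directions is uniform by rotation invariance, and John's theorem gives that the projection of the grain onto that plane contains a $k$-dimensional ball of radius $\ge R/(2k)$, while the grain still contains a ball of the fixed radius $\rho$ in $\mathbb R^d$. From this one shows, for two independent $k$-large grains $A,B$, that $\vol(A\oplus(-B))\gtrsim R^{\min\{2k,d\}}$ outside an event of relative orientations with probability bounded away from $1$: when $2k<d$ the Minkowski sum contains two radius-$R$ $k$-disks in transverse $k$-planes, thickened by $\rho$, of $d$-volume $\gtrsim R^{2k}\rho^{\,d-2k}$; when $2k\ge d$ it already has $d$-volume of order $R^d$. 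Hence two $k$-large grains with points $x$ in a common box of side $\asymp R$ are neighbours in $\mathscr G$ with probability $\gtrsim R^{\min\{2k,d\}-d}$, while such a box contains a Poisson number of $k$-large grains of mean $\asymp u\,R^{\,d-\alpha_k}$; the mean degree among the $k$-large grains of the box is therefore $\gtrsim u\,R^{\min\{2k,d\}-\alpha_k}$, which tends to infinity with $R$ for every $u>0$. A second-moment estimate then shows that with probability tending to one the $k$-large grains of a box form a single $\mathscr G$-component reaching all of its faces, and that the components of neighbouring boxes link up; a standard comparison of the ``good'' boxes with supercritical site percolation completes the argument, the only extra care being to fit grains that are large in some but not all of the first $k$ directions (and hence span several boxes) into the dependency structure.

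\emph{Part (b).} If $D_C^{_{(1)}}\in\mathcal L^d$, then because $C_x\subseteq\overline B(x,D_{C_x}^{_{(1)}})$ and $C_x\cap C_y\neq\emptyset$ forces $|x-y|\le D_{C_x}^{_{(1)}}+D_{C_y}^{_{(1)}}$, the edge set of $\mathscr G$ is dominated by that of the Poisson Boolean model with balls $B(x,D_{C_x}^{_{(1)}})$, which has a nonempty subcritical phase by Gou\'er\'e~\cite{G} since $\mathbb E[(D_C^{_{(1)}})^d]<\infty$; thus $\mathscr G$ is non-robust. In the remaining case the plan is to show $\mathbb E_0[|\mathcal C_0|]<\infty$ for small $u$, where $\mathcal C_0$ denotes the $\mathscr G$-component of the origin under $\mathbb P_0$, which forces every component of $\mathscr C$ to be bounded. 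Bounding $|\mathcal C_0|$ by the number of self-avoiding paths issuing from the origin and combining the multivariate Mecke formula with the substitution $z_i=x_i-x_{i-1}$ gives the identity
\[
 \mathbb E_0\bigl[\#\{\text{self-avoiding paths of length }n\text{ from }0\}\bigr]=u^{\,n}\,\mathbb E\Bigl[\textstyle\prod_{i=1}^{n}\vol\bigl(G_{i-1}\oplus(-G_i)\bigr)\Bigr],
\]
with $G_0,\dots,G_n$ independent copies of $C$, which reduces the question to iterating the operator $f\mapsto\bigl(A\mapsto\mathbb E_G[\vol(A\oplus(-G))f(G)]\bigr)$ from $f\equiv 1$. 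The key estimates to establish are $\mathbb E_G[\vol(A\oplus(-G))]\le c\,\vol(A)+c'$ and $\mathbb E_G[\vol(A\oplus(-G))\,\vol(G)]\le c\,\vol(A)+c'$, with $c$ depending only on $d$ and the $\alpha_j$, and $c'$ only on $d$, $\mathbb E[\vol(C)]$ and $\mathbb E[\vol(C)^2]$. Since Brunn--Minkowski only bounds $\vol(A\oplus(-G))$ from below, these must come from the diameter structure: uniformly in the relative orientation, a grain $G$ with $j$-th diameter of order $t$ enlarges $\vol(A\oplus(-G))$ by at most a factor of order $t^{\min\{j,\,d-k\}}$ over $\vol(A)$ when $A$ is spread in $k$ directions, so that after the rotation average this factor is integrable against the law of $C$ exactly because $\alpha_j>2j$ for all $j$ (the worst case being $\min\{j,d-k\}=j$), while the contribution of small $A$ and large $G$ is absorbed by $\mathbb E[\vol(C)^2]<\infty$. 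With these bounds the iterated quantity stays affine in $\vol(A)$ with at most geometrically growing coefficients, whence $\mathbb E_0[\#\{\text{paths of length }n\}]\le(Cu)^n$ and $\mathbb E_0[|\mathcal C_0|]<\infty$ for $u<1/C$.

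The hard part in both cases is convex geometry rather than percolation: in (a), the orientation-uniform lower bound $\vol(A\oplus(-B))\gtrsim R^{\min\{2k,d\}}$ that produces the neighbour exponent $\min\{2k,d\}-d$; in (b), the matching upper bounds on $\mathbb E_G[\vol(A\oplus(-G))]$ and $\mathbb E_G[\vol(A\oplus(-G))\vol(G)]$, which is precisely where the tail hypotheses $\alpha_j>2j$ and the moment hypothesis $\vol(C)\in\mathcal L^2$ enter. Once these are in place, the block comparison in (a) and the path expansion in (b) are routine.
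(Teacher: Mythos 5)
Your two parts diverge from the paper's proof in opposite ways: part~(b) is a genuinely different and, I believe, workable route, while part~(a) has a real gap that the paper's construction is specifically designed to avoid.

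For part~(b), the Mecke/telescoping identity $\mathbb E_0[\#\{\text{paths of length }n\}]=u^n\,\mathbb E\bigl[\prod_{i=1}^n\vol(C_{i-1}\oplus(-C_i))\bigr]$ is correct and is an elegant replacement for the paper's argument, which instead replaces grains by circumscribed rectangles, partitions $\mathbb R^d$ into ten regions $\mathcal A_1,\dots,\mathcal A_8$ (plus two boundary cases) around each pair of endpoints, and bounds the contribution of connectors region by region. Your key estimates $\mathbb E_G[\vol(A\oplus(-G))]\le c\,\vol(A)+c'$ and $\mathbb E_G[\vol(A\oplus(-G))\,\vol(G)]\le c\,\vol(A)+c'$ can in fact be obtained cleanly from the rotational mean-value formula $\int_{SO(d)}\vol(A+\rho G)\,d\nu(\rho)=\sum_{j=0}^d c_{d,j}V_j(A)V_{d-j}(G)$ together with $V_j(K)\le c(d,\rho)\,\vol(K)$ for bodies containing a fixed ball $B_\rho$ (via the inscribed pyramid and circumscribed rectangle of diameters), which gives the bound with $c$ of order $\mathbb E[\vol(C)^2]$ and $c'=0$. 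This is sharper and more transparent than the diameter-by-diameter reasoning you sketch (and in fact does not seem to need $\alpha_k>2k$ beyond what is forced by $\vol(C)\in\mathcal L^2$), so I would encourage you to replace the ``$\min\{j,d-k\}$ exponent'' heuristic with the integral-geometric identity. The handling of $D^{(1)}_C\in\mathcal L^d$ via domination by the ball model and Gou\'er\'e is exactly the paper's argument.

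For part~(a), the one-scale block renormalisation is a plausible strategy, but two of the steps you take as ``routine'' are precisely what the paper is working to avoid. First, a high mean degree among the $k$-large grains in a box of side $\asymp R$ does not by itself give a spanning component; you assert ``a second-moment estimate then shows'' connectivity reaching all faces, but a second-moment argument only gives a positive density of connected pairs, and passing from that to a single macroscopic component in a random geometric graph on Poisson points with highly anisotropic, orientation-dependent edge rules is a nontrivial step that you neither state as a lemma nor outline how to prove. Second, and more seriously, a grain with $D^{(k)}\ge R$ can have $D^{(1)}$ arbitrarily larger (the theorem makes no joint assumption on the diameters, and $\alpha_1\le\alpha_k$), so ``$k$-large'' grains typically span unboundedly many boxes; the resulting dependence between blocks has no deterministic range, so Liggett--Schonmann--Stacey-type domination by independent site percolation does not directly apply, and you acknowledge this (``the only extra care\dots'') without resolving it. The paper's proof of~(a) sidesteps both issues by not doing a block comparison at all: it builds one explicit self-avoiding path with grains growing along a geometric threshold sequence $f_{n}=f_{n-1}^{\,\min\{d-k,k\}/(\alpha_k-k)-\epsilon}$, conditioning step by step on disjoint annuli of radii $f_n$, so that each step's success probability is controlled directly and the product over $n$ stays bounded away from zero. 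If you want to keep a block approach, you would at minimum need to truncate grains, show the truncated process still has enough $k$-large grains, and prove a crossing estimate inside a block rather than appeal to a second moment.
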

\smallskip

\noindent\textit{Remarks:}\ \\[-.5cm]
\begin{itemize}
    \item Our criteria only refer to the individual tail probabilities of the diameters and moments of the volume of $C$. In particular, no further assumption is made on the \emph{joint distribution} of the diameters $D^{_{(1)}}_C, \ldots, D^{_{(d)}}_C$.
    \item The condition $\vol(C)\in\mathcal{L}^2$ in $(b)$ implies that $\alpha_k \geq 2k$ for every $1\leq k\leq d$
    and $D_C^{_{(1)}}\in \mathcal{L}^d$ implies $\alpha_1 \geq d$. Note however that $\alpha_k >2k$ does not imply that $\vol(C)\in\mathcal{L}^2$.
    \item The robustness condition in $(a)$ implies $\vol(C)\not\in\mathcal{L}^2$ and $D_C^{_{(k)}}\not\in \mathcal{L}^d$ (and in particular also
$D_C^{_{(1)}}\not\in \mathcal{L}^d$), while the non-robustness condition in $(b)$ implies $\vol(C)\in\mathcal{L}^2$ or 
    $D_C^{_{(1)}}\in \mathcal{L}^d$. One might be tempted to conjecture that these weaker conditions are sufficient. However, it turns out that this is not the case, as \cite{TU} provides an example of a non-robust grain distribution in $\mathbb R^2$
    with $\vol(C)\not\in\mathcal{L}^2$ and $D_C^{_{(1)}}\not\in\mathcal{L}^d$.
\end{itemize}

Proofs of Proposition~\ref{one} and Theorem~\ref{two} will be given in Section~2. We now describe several examples where our 
criteria can be applied. In our first two examples and example four the criteria are sharp up to a boundary case where we expect that robustness depends 
on finer details of the distribution of $C$. The criteria fail to be sharp in our third example. 
More details and proofs relating to the examples can be found in Section~3. Throughout the article, we will allow ourselves a short abuse of notation and refer to \emph{hyperrectangles} (i.e. generalisations of rectangles to higher dimensions) simply as rectangles or boxes, regardless of the choice of dimension.

\subsubsection*{Ellipsoids with long and short axes.}

We sample a random radius $R>1$ from a distribution which has a tail distribution which is regularly varying with 
index~$-\alpha$. For integers $0\leq m \leq d$ we define a random ellipsoid $K$ with centre in the origin and $d-m$ long axes of length $R$ and $m$
short axes of length one. We let $C$ be the random convex set obtained by rotating $K$ about its centre by an independent 
uniform angle $\vartheta\in S^{d-1}$. Then
\begin{itemize}
\item $C$ is sparse if $\alpha \geq d-m$.
\item $C$ is robust if $\alpha < \min\{2(d-m),d\}$.
\item $C$ is non-robust if $\alpha > \min\{2(d-m),d\}$.
\end{itemize}
Observe that in the case $m=0$ the ellipsoid $C$ is a ball of random radius and in this case there is no robust and sparse regime, as observed earlier
by Gou\'ere~\cite{G}. The case $m=1$, $d=2$ corresponds to the case studied by Teixeira and Ungaretti~\cite{TU} and we recover their result. 
Note that in all dimensions $d\ge 2$ whenever $0<m<d$ there exists $\alpha$ such that $C$ is both robust and sparse.

\subsubsection*{Ellipsoids with independent axes.}
We sample $d$ independent random radii $R_1,\ldots, R_d\ge1$ from distributions which have regularly varying 
tail distributions with index~$-\beta_i$. We define a random ellipsoid $K\subset \mathbb R^d$ with axes of length $R_1,\ldots, R_d$
and let $C$ be the random convex set obtained by rotating $K$ about its centre by an independent 
uniform angle $\vartheta\in S^{d-1}$. Then
\begin{itemize}
\item $C$ is sparse if $\beta_i > 1$ for all $1\leq i\leq d$.
\item $C$ is robust if there exists $1\leq i \leq d$ such that $\beta_i<2$.
\item $C$ is non-robust if $\beta_i >2$ for all $1\leq i\leq d$.
\end{itemize}

\subsubsection*{Ellipsoids with strongly dependent axes.}

Let $0\leq \beta_1\leq\ldots\leq \beta_d$ and pick $U\in(0,1)$ uniformly at random. We define a random ellipsoid $K\subset \mathbb R^d$ with axes of length  $U^{-\beta_1}, \ldots, U^{-\beta_d}$ and let $C$ be the random convex set obtained by rotating $K$ about its centre (or indeed any inner point) by an independent 
uniform angle $\vartheta\in S^{d-1}$. Then
\begin{itemize}
\item $C$ is sparse if $\sum_{i=1}^d \beta_i<1$,
\item $C$ is robust if there exists $1\leq k \leq d$ such that $\beta_{d-k+1} >\max\{\frac{1}{2k}, \frac{1}{d}\}$.
\item $C$ is non-robust if $\frac{1}{2k}>\beta_{d-k+1}$ for every $1\leq k \leq d$.
\end{itemize}
In this case, the criteria are not sharp. Choosing \smash{$\beta_d = \frac{1}{\alpha}$} and \smash{$\beta_k = \frac{1}{2\alpha}$}, for $k\neq d$ while $\alpha>0$, we get sparseness for \smash{$\alpha > \frac{d+1}{2}$}, robustness for \smash{$\alpha<  \frac{d}{2}$} and non-robustness if $\alpha> d$ via Theorem \ref{two}. For \smash{$\frac{d}{2}\leq\alpha\leq d$} our robustness criteria are inconclusive. By contrast, the criterion is sharp if $\beta_k= \frac{1}{\alpha}$ for $k>1$ and $\beta_1 =\frac{1}{2\alpha}$ while $\alpha >0$. We get sparseness for $\alpha > d-\frac{1}{2}$, robustness for $\alpha <d$ and non-robustness for $\alpha >d$.

\subsubsection*{Random triangles.}

Let $R>1$ be random with regularly varying tail of index $-\alpha$, for $\alpha >0$. Take the \emph{right} triangle $K\subset 
\mathbb R^2$ such that $R$ is the length of the hypotenuse and $\mathrm{Vol}(K)=\frac14 R^{1+\beta}$, for some $\beta\in(0,1)$. 
Note that this describes the triangle uniquely up to symmetries. Now choose the origin uniformly as one of the corners of $K$
and let $C$ be the random set obtained by a uniform rotation via this point. 
Then 
\begin{itemize}
\item $C$ is sparse if $\alpha> 1+\beta$,
\item $C$ is robust if $\alpha < 2$,
\item $C$ is non-robust if $\alpha >2$.
\end{itemize}

    \section{Proofs}
    In order to prove our result we first formalise our setup. Denote by $\mathcal C^d$ the space of convex bodies in $\mathbb R^d$ with the Hausdorff metric. Recall that we assume that, for some fixed $\epsilon>0$, the $\epsilon$-interior of  $C$
    is nonempty almost surely. We assume that $\mathbb{P}_{C}$ is a law on
    $\mathcal C^d \times \mathbb R^d$ such that, for 
    $\mathbb{P}_{C}$-almost every $(C,m)$ the point $m$ is in the $\epsilon$-interior of  $C$. We further assume that $\mathbb{P}_{C}$ is invariant under simultaneous rotations of 
    $C$ and $m$ about the origin.
 %
We now define the Poisson-Boolean base model, which we use in our proofs.

    \begin{mydef}  
    The \textit{Poisson-Boolean base model} is the Poisson point process on $$\mathcal{S}:=\mathbb{R}^d\times \big(\mathcal C^d\times \mathbb{R}^d \big) $$
    with intensity $$  u \, \lambda \otimes\mathbb P_C $$
    where $\lambda$ is the Lebesgue measure. The corresponding point process is denoted by $\mathcal{X}$. We call any $\textbf{x}\in\mathcal{X}$ a \emph{vertex} and its first component~$x\in\mathbb R^d$
    its \emph{location}. By $\mathscr P$ we denote the Poisson point process of locations. The second component is denoted by $(\tilde C_x, m_x)$ and $\tilde C_x$ is called the \emph{grain at $x$}. We denote its diameters by $D_x^{_{(1)}},\dots,D_x^{_{(d)}}$
    and the corresponding directions by $p_x^{_{(1)}},\dots,p_x^{_{(d)}}$.
    Then we set
    $$C_{x}:=x+\tilde C_x$$
    and define
    $$\mathscr C := \bigcup_{x\in \mathscr P }  C_x,$$
    which is a representation of the \emph{Poisson-Boolean model}. 
    When considering two vertices $\textbf{x},\textbf{y} \in \mathcal{X}$, we say they are connected by an edge, if and only if the sets 
    \smash{$C_{x}$ and $C_{y}$} intersect. We denote this by writing $\textbf{x}\sim \textbf{y}$. If $\textbf{x}$ and $\textbf{y}$ are connected through  $n$ edges we write $\textbf{x}\overset{n}{\sim}\textbf{y}$ and say $\textbf{x}$ and $\textbf{y}$ are connected by a path of length exactly $n$. 
    \end{mydef}

    \subsection{Criteria for density}
In all our proofs we use the notation
$B_\varepsilon(x):= \{y\in\mathbb{R}^d \colon |y-x|\leq\varepsilon\}$. We also write $\eta K:= \{ \eta x \colon x \in K\}$ for the \emph{blow-up} of the set $K\subset\mathbb{R}^d$ about the origin by a factor~$\eta>0$. Let \smash{$(e_i)_{i\in\{1,\dots,d\}}$} be the canonical basis of $\mathbb{R}^d$ and for $\vartheta\in\mathbb{S}^{d-1}$ let {\color{black}$\rot_{\vartheta}$ be an arbitrary but fixed rotation such that $\rot_{\vartheta}(e_1)=\vartheta$. Throughout the proofs we use $c\in(0,\infty)$ as a generic constant which may change its value at every inequality, but is always finite and may depend on $d$, $k$, $\alpha_k$ and on $\varepsilon>0$ appearing in the Potter bounds only. It depends also on $\epsilon>0$ while $\epsilon$ is the radius of the ball which is completely included in $C$.} \smallskip

    \begin{proof}[{Proof of Proposition \ref{one}}]
This proof does not require rotation invariance. By the mapping theorem, see e.g. Theorem~5.1 in~\cite{Last}, the point process given by the points $x+m_x, x\in\mathscr P$ is again a homogeneous Poisson point process. We may therefore assume, without loss of generality, that there is a fixed $\epsilon>0$ such that $B_\epsilon(x)\subset C_x$ for all $x\in\mathscr P$.%
\medskip

    $(a) \Leftrightarrow (b)$ is clear by ergodicity. 
	$(1) \Rightarrow (a) $ follows because, by Section 2 of \cite{Hall}, we have $
			\mathbb{P}(\mathrm{Vol}(\mathbb{R}^d \setminus \mathscr{C})=0) =1$ if and only if $\mathbb{E} [\mathrm{Vol}(C)]=\infty$.
   To also get $ \neg  (1) \Rightarrow \neg  (a) $ from this statement we apply it to 
   $\eta C$, for some fixed $0<\eta<1$, and get that 
   $\mathbb{E} [\mathrm{Vol}(C)]=\infty$ implies $\mathbb{E} [\mathrm{Vol}(\eta C)]
   = \eta^d \mathbb{E} [\mathrm{Vol}(C)]=\infty$ and hence 
   \begin{equation*}
			\mathbb{P}\bigg(\mathrm{Vol}\Big( \mathbb R^d \setminus \bigcup_{x\in\mathscr{P}} (\eta C)_x \Big) =0
   \bigg) =1,
   \end{equation*}
   This event implies $\bigcup_{x\in\mathscr{P}} C_x =\mathbb R^d$. Indeed, if $y\not\in \bigcup_{x\in\mathscr{P}} (\eta C)_x$ there exists a sequence of points $(x_n)$ in $\mathscr{P}$ such that
   the distance of $y$ and $(\eta C)_{x_n}$ goes to zero. If we replace $(\eta C)_{x_n}$ by $C_{x_n}$,
the distance to $y$ is reduced by at least the fixed amount $\epsilon(1-\eta)>0$, because $\operatorname{dist}(x,\eta C)\geq \operatorname{dist}(x,C) + \operatorname{dist}(\partial C, \partial \eta C)$ and $\operatorname{dist}(\partial C, \partial \eta C) \geq (1-\eta)\epsilon$, where $\dist(A,B)$ is the standard Hausdorff distance for $A,B\subset\mathbb{R}^d$. Hence there exists $n$ with $y\in C_{x_n}$. This completes the proof of $ (a) \Leftrightarrow (1) $.
    The implication $(c) \Rightarrow (a) $ is immediate from $\{\mathscr{C} = \mathbb{R}^d\}\subset \{0\in\mathscr{C}\} $.
    \smallskip\pagebreak[3]
    
  %
  To show that $(1) \Leftrightarrow (g)$ we  use Campbell's theorem to calculate
  \begin{align*}
        \mathbb{E}[M_0] &= u \int  \mathbb{P}(0\in x+C) \, \text{d}\lambda(x)
        = u \int  \mathbb{P}(x\in C) \, \text{d}\lambda(x) =u\mathbb{E}[\mathrm{Vol}(C)],
  \end{align*}
  which readily implies the equivalence. 
  \smallskip

  To show that $(d)\Rightarrow (1)$ observe that under the Palm distribution  $\mathscr{P}(C_0\setminus\{0\})=k$ implies that $N_0\geq k$.  Hence $\mathbb{E}_0[N_0] \geq \mathbb{E}_0[\mathscr{P}(C_0)-1] =u \mathbb{E}[\lambda(C)]$, from which the implication follows. 
  For the implication $(1)\Rightarrow (d)$ we enlarge the sets $C_x$, for each $x\in\mathscr P$, to become rectangles $R_x$ taken as the sum of $x$ and the cartesian product of the intervals $[-D_x^{_{(i)}},D_x^{_{(i)}}]$ with respect to the orthonormal basis given by the 
directions~$p^{_{(i)}}_{C_x}$. The enlargement increases the volume of the sets by no more than a constant factor. Indeed, if $R$ is the rectangle constructed from $C$ we have $\vol(R)= 2^d \prod D^{_{(i)}}$. As the convex hull of the points, which define the length  and  orientation of the diameter, is contained in $C$ we can lower bound its volume iteratively via the formula of the volume of hyperpyramids and get
    $$\vol(R) \leq (2^d\cdot d!) \vol(C).$$ 
  We write
  $\tilde{N}_A$ and $\tilde{N}_0$ for the degrees defined like ${N}_A$ and ${N}_0$ but with respect to the enlarged sets. 
  By assumption we have $x+[-\epsilon/\sqrt{d}, \epsilon/\sqrt{d}]^{d}\subset R_x$. As $C_x\subset R_x$ we have
  $N_0\leq \tilde{N}_0$ and we can bound the expected degree of the origin from above,
  \begin{equation*}
      \mathbb{E}_{0}[N_0]\leq \mathbb{E}_0[\tilde{N}_0 ]\leq \mathbb{E}_0 \bigl[\sum\limits_{x\in(\epsilon/\sqrt{d})\mathbb{Z}^d} \mathbbm{1}_{R_0}(x)\tilde{N}_{B_{\epsilon}(x)}\bigr],
  \end{equation*}
 where in the second inequality we use that $[-\epsilon/\sqrt{d}, \epsilon/\sqrt{d}]^{d}\subset R_0$ and hence balls of radius $\epsilon$ centred in the points of $(\epsilon/\sqrt{d})\mathbb{Z}^d \cap R_0$ cover $R_0$. 
Now $\mathbbm{1}_{R_0}(x)$ and $\tilde{N}_{B_{\epsilon}(x)}-\mathbbm{1}_{R_0\cap B_{\epsilon}(x) \not= \emptyset}$ are independent and we get an upper bound of
  \begin{equation*}
   \mathbb{E}_{0}[N_0]\leq  \mathbb{E}_0\bigl[\#((\epsilon/\sqrt{d})\mathbb{Z}^d\cap R_0)\bigr]\mathbb{E}\bigl[\tilde{N}_{B_{\epsilon}(0)}+1\bigr].
  \end{equation*}
By \cite{Freyer} there exists a constant $c(d,\epsilon)$ which only depends on the dimension and $\epsilon$ such that the number of lattice points in $K$ can be bounded from above by $c(d,\epsilon)\vol(K)$. 
  Looking now at the last factor 
  we use
  \begin{equation*}
      \mathbb{E}\bigl[\tilde{N}_{B_{\epsilon}(0)}\bigr] =u\int\mathbb{P}_0(B_{\epsilon}(0) \cap (x+R) \neq \emptyset) \,\text{d}x = u\int\mathbb{P}(B_{\epsilon}(x) \cap R \neq \emptyset) \,\text{d}x.
  \end{equation*}
 As $\{B_{\epsilon}(x) \cap R \neq \emptyset\}\subset \{x\in 2R \}$ we can bound the previous term from above by 
 $$u \mathbb{E}[\vol(2R)]= u 2^{d}\,\mathbb{E}[\vol(R)] \leq u \,2^{2d}\cdot d! \mathbb{E}[\vol(C)].$$
 Hence we get that $\mathbb{E}_{0}[N_0]$ is finite if $\mathbb E[$Vol$(C)]$ is finite. 
\smallskip
  
  The equivalence of  $(g), (h), (i)$ and $(1)$ follows because
		$M_0$ is Poisson distributed with parameter $u\mathbb E[\vol(C)]$ and therefore finite almost surely if and only if the parameter is finite. The implications $(d)\Rightarrow (f)\Rightarrow (e)$ are trivial.  The implications $ (f) \Rightarrow (i)$ and $(e) \Rightarrow (h)$ follow as $N_0 \geq M_0-1$ under the Palm distribution.  Finally, to show the implication $(1)\Rightarrow (c)$ we note that $\mathbb{P}(0\in\mathscr{C})= \mathbb{P}(M_0 \geq 1)<1$ because $M_0$ is Poisson distributed with parameter $u\mathbb E[\vol(C)]<\infty$.
    \end{proof}
    \subsection{Criteria for robustness}
    We now prove Theorem~\ref{two} $(a)$. The idea of the proof is to construct,  for every $u>0$, an infinite self-avoiding path in $\mathscr G$. The path is constructed such that the convex bodies attached to the vertices on the path are growing faster than a given increasing threshold sequence. Note that without loss of generality we can assume that $\alpha_k \geq k$, for all $k\in\{1,\dots,d\}$, since otherwise $\vol(C)\not\in\mathcal{L}^1$ and by Proposition \ref{one} there is nothing left to show. 

    \begin{myrem} In this and the following section we bound the set of orientations which result in an intersection of two convex bodies using the following known geometric property. Consider two points $x,y\in\mathbb{R}^d$ at distance $a:=|x-y|$. Let $A$ be a $d-1$ dimensional set which contains $y$ and lies in the hyperplane through $y$ perpendicular to $x-y$. Denote further by $\lambda_{d-1}$ the $d-1$ dimensional Lebesgue measure. Then, the set of orientations of (infinitely long) lines that go through $x$ and intersect $A$ can be bounded from below by $\tfrac{\lambda_{d-1}(A)}{2^{d-1}a^{{d-1}}} $ and from above by $\tfrac{\lambda_{d-1}(A)}{a^{{d-1}}} $. 
    \label{rem:winkel}
    \end{myrem}

\begin{proof}[Proof of Theorem~\ref{two} (a)]     
As in the proof of Proposition~\ref{one} the point process given by the points $x+m_x, x\in\mathscr P$ is a homogeneous Poisson point process. As done there, we also replace the independent attachments $(C,m)$ by $(C-m,0)$ so that the new grains contain a ball of radius $\epsilon$ around the origin and, by the rotation invariance assumption, their distribution is invariant under rotations around the origin. Reverting to the original notation we may assume henceforth that $B_\epsilon(x) \subset C_x$ for all $x\in\mathscr P$.
\smallskip

For $1\leq k <d$ with $\alpha_k\neq k$ we  define the increasing threshold sequence $(f_{_{n}})_{n\in\mathbb{N}}$~by
        \begin{equation*}
        f_{_{n}}:=\bigl(f_{_{n-1}}\bigr)^{\frac{\min\{d-k,k\}}{\alpha_k-k}-\epsilon},\, \mbox{ for } n\in\mathbb{N},
        \end{equation*} 
        where $f_{_{0}}>1$ can be chosen arbitrarily and will be set large later in the proof, and $0<\epsilon<\frac14$ is such that the exponent in this sequence is strictly bigger than~1 in order to guarantee that the sequence is increasing. This is possible by our assumption that $\alpha_k< \min\{2k,d\}$.  Note that we can without loss of generality take this $\epsilon$ to be the same as in the requirement that $B_{\epsilon}(0)\subset C$, by replacing the larger of the two with the smaller if necessary. We comment on the {cases $k=d$ and $\alpha_k=k$} at the end of the~proof. 
       \medskip

        {Let therefore  $k\in\{1,\dots,d-1\}$ and assume for now that $f_{_{0}}>1$.} Furthermore let
        $$A_0:=\Big\{ \exists \textbf{x}_0\in\mathcal{X}\cap B_{f_{_{0}}}(0)\,:\, D_{x_0}^{_{(k)}}\geq 2^{2(d-1)}f_{_{0}} \Big\}.$$
      We will remark on the factor $2^{2(d-1)}$ later when considering a more general case of $A_0$. As $D^{_{(k)}}_{\phantom{x}}$
        is regularly varying with index $-\alpha_k$, we have that the probability of $A_0$ is
        \begin{align}
            \mathbb{P}(A_0) &= 1- \exp\Bigl(-u\int_{B_{f_{_{0}}}(0)} \mathbb{P}(D^{_{(k)}}_{\phantom{x}}\geq  2^{2(d-1)}f_{_{0}}) \, \text{d}\lambda(x)\Bigr)  \notag \\
            & \geq 1- \exp\bigl(-uc(\varepsilon,d)f_{_{0}}^{d-\alpha_k-\varepsilon}\bigr), \label{a0}
        \end{align}
        where we note that $\varepsilon>0$ was chosen such that $d-\alpha_k-\varepsilon >0$. In the last inequality we have applied the Potter bounds (see \cite[Prop.\ 1.4.1]{Kulik}), which for a non-negative regularly varying random variable $X$ with index $-\alpha<0$  and every $\varepsilon \in(0,\alpha)$  yield a constant $c(\varepsilon)>0$, such that 
        \begin{equation*}
          c(\varepsilon)^{-1}\, x^{-\alpha -\varepsilon} \leq \mathbb{P}(X >x)\leq c(\varepsilon)\,x^{-\alpha +\varepsilon}
        \end{equation*}
        holds for $x\geq \varepsilon.$
        For $\textbf{x}\in\mathcal{X}$ we define the set     
        \begin{equation*}
            O_{_{i}}(\textbf{x}):= \Bigl\{y\in\mathbb{R}^d: |x-y| \in [\tfrac12 f_{_{i}},f_{_{i}}], \measuredangle\big(x-y, \pm p_{x}^{_{(j)}}\big)> \varphi \mbox{ for all }  1\leq j\leq k \Bigr\},
        \end{equation*}
        where $\varphi=2^{-k}\pi$ and $\measuredangle(x,y)$ denotes the angle between vectors $x,y\in\mathbb{R}^d$ and $\pm$ means that the condition on the angles is supposed to hold for the angle between $x-y$ and $+p_{x}^{_{(j)}}$ as well as $-p_{x}^{_{(j)}}$. For $y\in\mathbb{R}^d$ we define 
        \begin{equation*}
            B^{*}_{f_{_{i-1}}}(\textbf{x},y) = P_{H_{x-y}}\Big(C_{x}\cap B_{f_{_{i-1}}}(x)\Big),
        \end{equation*}
        where $H_{x-y}$ is the hyperplane perpendicular to $x-y$ with \smash{$|H_{x-y} \cap \partial B_{f_{_{i-1}}}(x)| =1$} and \smash{$\dist(H_{x-y}, x)<\dist(H_{x-y},y)$}, i.e. the hyperplane that touches the ball of radius~$f_{_{i-1}}$ with $x$ as the centre, see Figure~\ref{Bstern} \vpageref{Bstern}.
        \begin{figure}[ht]
           \begin{minipage}[b]{1\linewidth}
              \includegraphics[width=\linewidth]{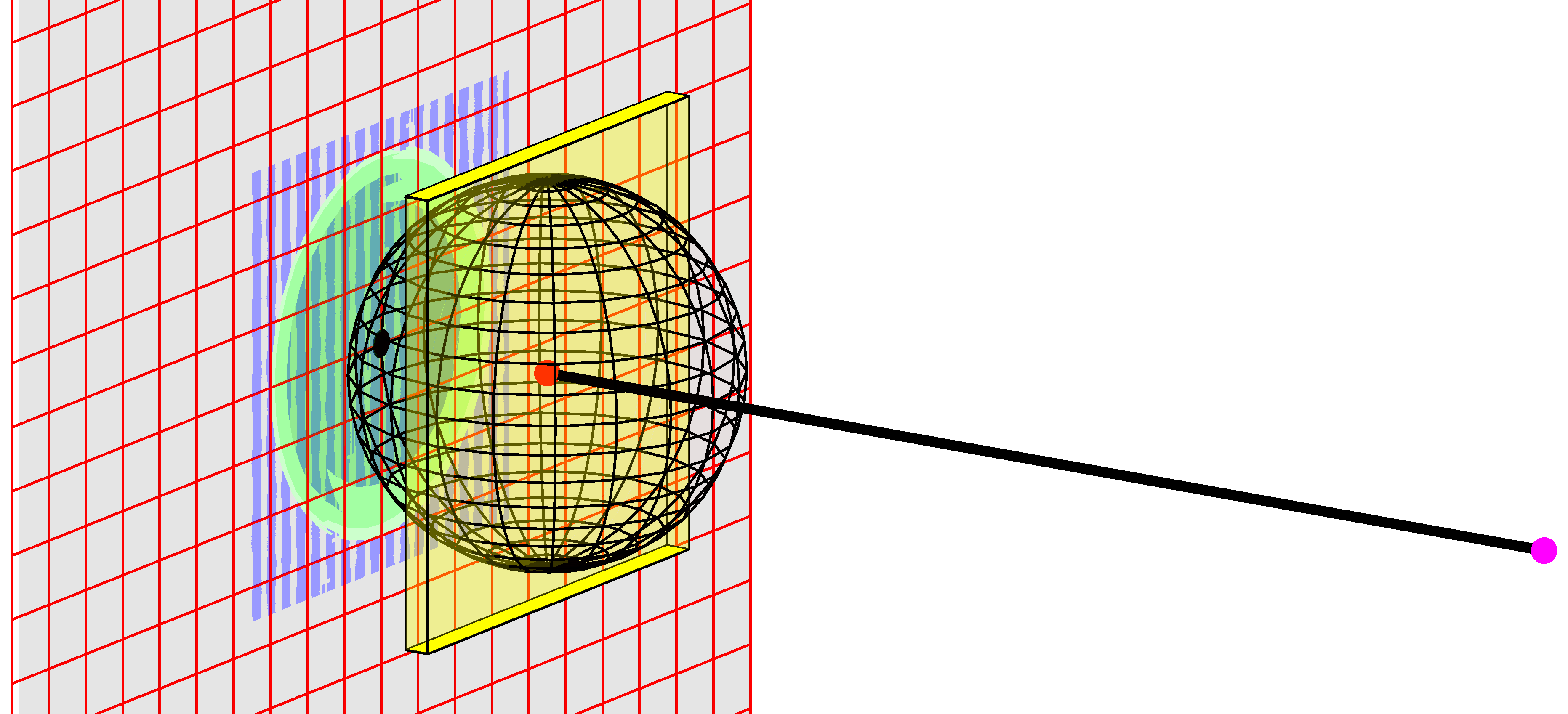}
              \caption{The various sets and  their relationships from the proof of 
              Theorem~\ref{two}(a): The point $y$ is in pink, $x$ in red. The yellow area is $C_x$. The orthogonal projection of $C_x$ is dark blue and $H_{x-y}$ is the grey plane with the red grid. \smash{$B^{*}_{^{f_{_{i-1}}}}(\textbf{x},y)$} is lime green. Drawn in black are the line with orientation $x-y$, the point lying in \smash{$H_{x-y} \cap \partial B_{f_{_{i-1}}}(x)$} and the ball $B_{f_{_{i-1}}}(x)$ containing the location $x$ of the vertex $\textbf{x}$ corresponding to  $C_x$.}
              \label{Bstern}
        \end{minipage}
        \end{figure}
        \smallskip
        
        Finally, for $n\in\mathbb N$, define the events
        \begin{align*}
            A_n  := \Bigg\{ \begin{split}\exists &\textbf{x}_1,\ldots, \textbf{x}_n\in\mathcal{X}\colon \textbf{x}_i\neq\textbf{x}_m \text{ for all } i\neq m, D_{x_i}^{_{(k)}}\geq 2^{2(d-1)}f_{_{i}},\\ & x_i\in O_i(\textbf{x}_{i-1})\text{ and }C_{x_i}\cap B^{*}_{f_{_{i-1}}}(\textbf{x}_{i-1},x_i)\neq \emptyset \text{ for all } 1\leq i\leq n \end{split}\Bigg\}.
        \end{align*}
        Note that $A_n$ is implicitly dependent on $\textbf{x}_0$, but we omit this in the notation to keep it concise. Roughly speaking, $A_n$ is the event that we find a path of length $n$ with the properties that, for every $i\in\{1,\ldots,n\}$, 
        \begin{itemize}
            \item the first $k$ diameters of the convex body $C_{x_i}$ do not fall below the threshold $2^{2(d-1)}f_{_{i}}$. 
            For the choice of factor $2^{2(d-1)}$, it can easily be checked that a convex body with diameters $D^{_{(1)}},\dots,D^{_{(d)}}$ contains a rectangle with edge lengths $2^{2(d-1)}D^{_{(1)}}, \dots ,2^{2(d-1)}D^{_{(d)}}$. 
            \item $x_i$ does not lie ``too close'' to the affine subspace through $x_{i-1}$ spanned by the orientations \smash{$p_{x_{i-1}}^{_{(1)}},\dots,p_{x_{i-1}}^{_{(k)}}$} of the big diameters of $x_{i-1}$.  More precisely, we require the angle between any spanning vector of this subspace through $x_{i-1}$ and the vector $x_{i}-x_{i-1}$ 
            to be larger than $\varphi$. This is 
            in order to keep the condition on the orientation of $\textbf{x}_i$ from becoming too restrictive when we formulate the requirement that $C_{x_{i}}$ intersects $C_{x_{i-1}}$. Finally,
            \item the convex body $C_{x_i}$ intersects a part of a hyperplane ``behind'' the convex body~$C_{x_{i-1}}$, see Figure \ref{Bstern}.
        \end{itemize}
       In addition to these restrictions that ensure that the points and their respective convex bodies are sufficiently close, suitably aligned  and large enough to keep the chain of intersections going, $A_n$ also gives 
        \begin{itemize}
            \item The distance between points $x_{i-1}$ and $x_i$ is at least  $\frac12 f_{_{i}}$ and $x_i\in B_{2f_{_{i}}}(0)$.
        \end{itemize}
        This last property allows us to search for each point $x_i$ in an annulus disjoint from those of the previous points $x_1, \ldots, x_{i-1}$.
        \medskip
        
        We now construct an infinite sequence of points \smash{$\textbf{x}_1, \textbf{x}_2\ldots \in\mathcal{X}$} such that the first $n$ points satisfy the event $A_n$ with a probability  bounded from below. Recall that the existence of $\textbf{x}_0$ satisfying $A_0$ has already been taken care of in~\eqref{a0}. 
         We let $\mathfrak F_n$ be the $\sigma$-algebra
        generated by the restriction of the Poisson-Boolean base model to the points with locations in $B_{2f_{_{n}}}(0)$. Then $A_n$ is 
         $\mathfrak F_n$-measurable and we find $\textbf{x}_{n+1}$, 
assuming that $\textbf{x}_1,\ldots, \textbf{x}_n 
        \in\mathcal{X}$ have been found satisfying $A_n$, such that
         $\textbf{x}_1,\ldots, \textbf{x}_{n+1} 
        \in\mathcal{X}$ satisfy $A_{n+1}$ with a conditional probability bounded from below. Figures \ref{d3m1} and \ref{d3m2} are sketches of the $(n+1)^{st}$ step in the construction.
        \begin{figure}
           \begin{subfigure}[t]{.495\linewidth} 
              \includegraphics[width=\linewidth]{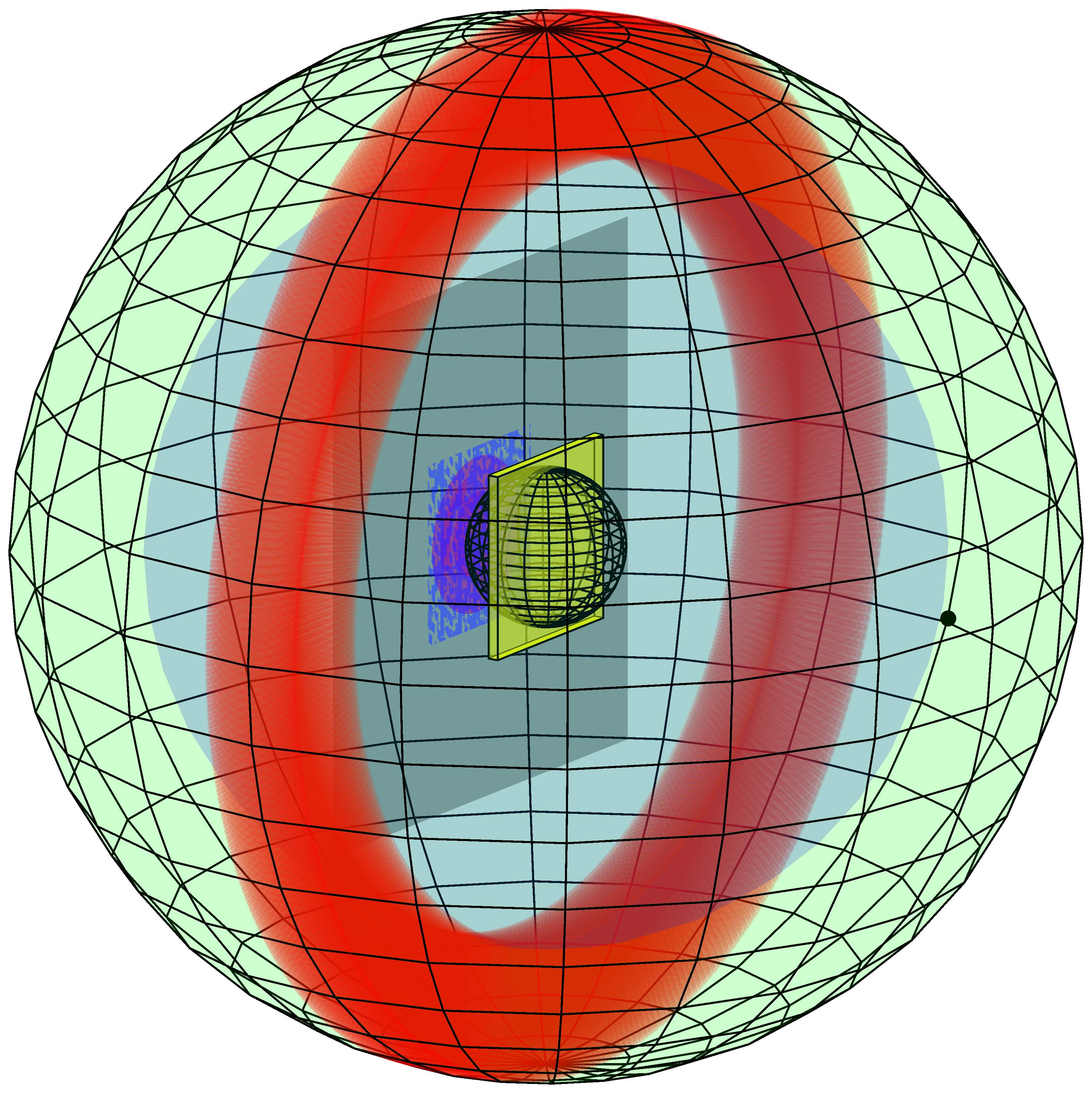}
              \subcaption{$i$-th step of the construction in $\mathbb{R}^3$ with \\$k=2$.}
            \label{d3m1}
           \end{subfigure}
           \hfill
           \begin{subfigure}[t]{.495\linewidth}
              \includegraphics[width=\linewidth]{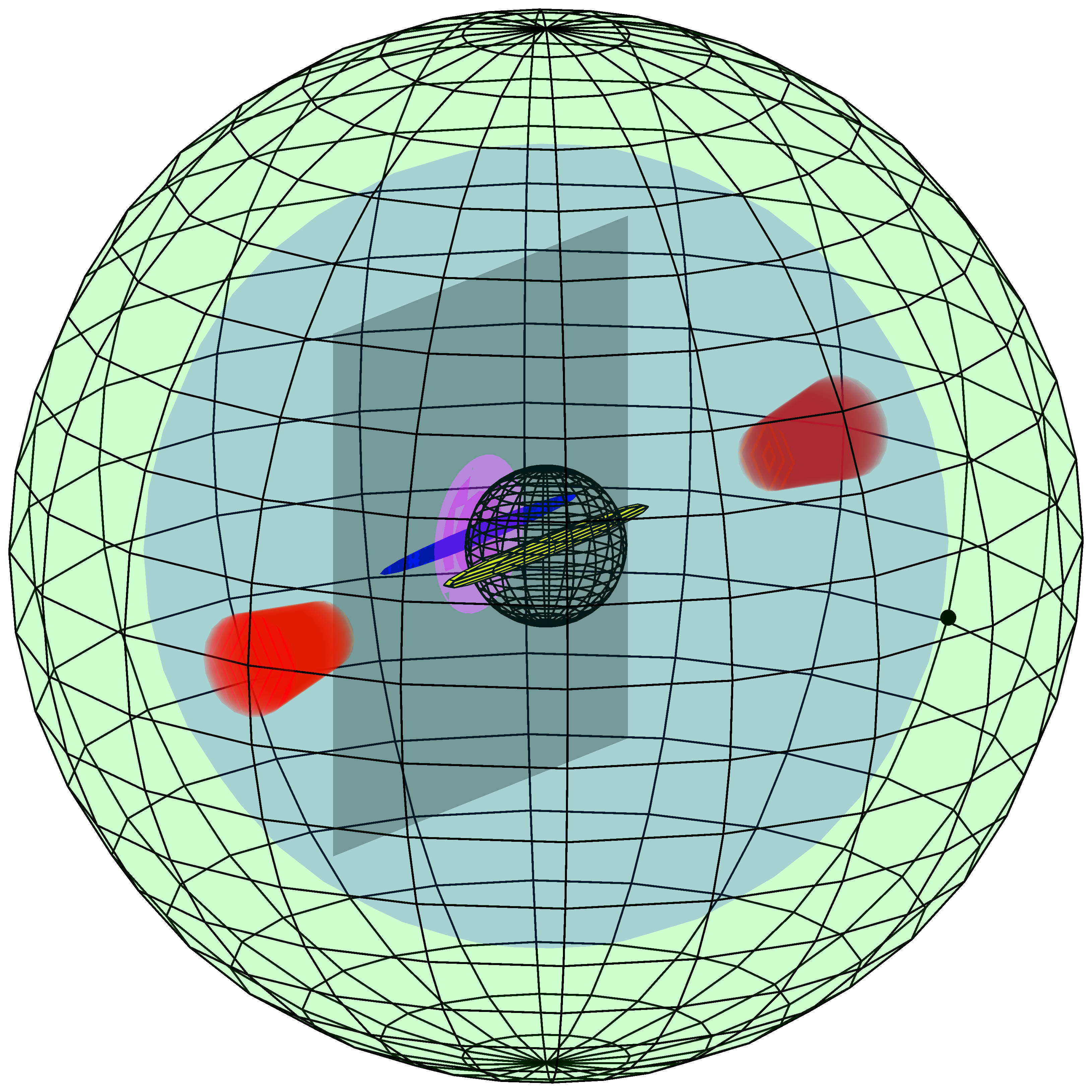}
              \subcaption{$i$-th step of the construction in $\mathbb{R}^3$ with \\$k=1$.}
              \label{d3m2}
           \end{subfigure}
           \caption{The recursive construction of the proof of Theorem~\ref{two}(a). The convex set $C_{x_{i-1}}$ is in yellow. Mint green is used for \smash{$B_{^{f_{_{i}}}}(x_{i-1})\setminus B_{^{f_{_{i}}/2}}(x_{i-1})$}; 
           i.e. the possible area for $x_i$.  The part in which $x_i$ is not permitted to be is in red. Grey is used for $H_{x_{i-1}-x_i}$, with dark blue for the orthogonal projection of $C_{x_{i-1}}$. In pink, the orthogonal projection of $B_{f_{_{i-1}}}(x_{i-1})$. Finally, the black point is a possible position for $x_i$.}     
           \label{construction}
        \end{figure}
  
  On the event $A_n$ we calculate
		\begin{align*}
			\mathbb{P}[A^c_{n+1} \,\mid\, \mathfrak F_n] \leq \mathbb{P}\bigl[
    & \text{ there exists no }  \textbf{x} \in \mathcal{X} \mbox{ with } x\in B_{2f_{_{n+1}}}(0) \setminus
   B_{2f_{_{n}}}(0) \mbox{ such that }\\
   & 
   D_x^{_{(k)}}\geq 2^{2(d-1)}f_{_{n+1}}, \, x\in O_{n+1}(\textbf{x}_{n})\text{ and }C_x\cap B^{*}_{f_{_{n}}}(\textbf{x}_{n},x)\neq \emptyset
   \,\mid\, \mathfrak F_n \bigr].
   \end{align*}
 Recall that $\rot_{\vartheta}$ is a rotation such that $\rot_{\vartheta}(e_1)=\vartheta$. For our purpose it matters only that the shortest diameter is suitably oriented after this rotation.
    For $x,y\in\mathbb{R}^d$ we define $\rho_{x,y}:=\tfrac{x-y}{|x-y|}\in \mathbb{S}^{d-1}$ as the orientation of the vector $x-y$. With that we define   
    \begin{align*}
        Q_{f_{_{n}}}(x,y) := \rot_{\rho_{x,y}}\Bigl(\operatorname{conv}\bigl(&\bigl\{-\tfrac{\epsilon}{2}e_i,\tfrac{\epsilon}{2}e_i\,: \, 1\leq i \leq d-k\bigr\}\\ &\cup \bigl\{-\tfrac{f_{_{n}}}{2}e_m,\tfrac{f_{_{n}}}{2}e_m\, : \, d-k+1\leq m\leq d\bigr\}\bigr)\Bigr) +x
    \end{align*}
     with $\operatorname{conv}(A)$ being the convex hull of the set $A$ 
     and define
     \begin{equation*}
         Q^*_{_{f_{_{n}}}}(x,y):= P_{H_{x-y}}\bigl(Q_{f_{_{n}}}(x,y)\bigr).
     \end{equation*}
    Choosing $y=x_n$ the choice of \smash{$Q_{f_{_{n}}}(x_n,x)$} allows us to 
    obtain a lower bound for the probability that  $D^{_{(k)}}_x\geq 2^{2(d-1)}f_{_{n+1}}$ and \smash{$C_x\cap B^*_{{f_{_{n}}}}(\textbf{x}_n,x)\not=\emptyset$}, uniformly across all $C_x$ satisfying  $D^{_{(k)}}_x\geq 2^{2(d-1)}f_{_{n+1}}$, for all \smash{$x \in O_{n+1}(\textbf{x}_n)$}, by replacing  \smash{$B^*_{{f_{_{n}}}}(\textbf{x}_n,x)$} with the set $Q^*_{{f_{_{n}}}}(x_n,x)$. To see why, note that $Q^*_{_{f_{_{n}}}}(x_n,x)$ has smaller $d-1$ dimensional Lebesgue measure than $B^*_{_{f_{_{n}}}}(\mathbf{x}_n,x)$ for all $\mathbf{x}_n$ that satisfy $D^{_{(k)}}\geq f_{_{n}}$.
\smallskip

   Abbreviating $I_{n+1}:=  \bigl(B_{2f_{_{n+1}}(0)}\setminus B_{2f_{_{n}}(0)}\bigr)\cap O_{n+1}(\textbf{x}_n) $ we get
    \begin{align*}
     & \mathbb{P}[A_{n+1}^c | A_n] \\
     &\leq \mathbb{E}\Bigl[  \mathbb{P}\bigl[ \text{ there exists no }  \textbf{x} \in \mathcal{X} \mbox{ such that } x\in B_{2f_{_{n+1}}}\!\!(0)\setminus B_{2f_{_{n}}}\!\!(0)  \\ &\hspace{35pt} 
     D_x^{_{(k)}}\geq 2^{2(d-1)}f_{_{n+1}}, x\in O_{n+1}(\textbf{x}_n)  \text{ and } C_x\cap B^*_{{f_{_{n}}}}(\textbf{x}_n,x)\neq \emptyset  \,\mid\, \mathfrak F_n \bigr]\mathbbm{1}_{A_n}\Bigr]\big/{\mathbb P}(A_n)\\
     &= {\mathbb{E}\Bigl[\exp\Bigl(-u\!\!\!\!\int\limits_{I_{n+1} }\!\! \mathbb{P}_C( D_{C}^{_{(k)}} \!\!\geq 2^{2(d-1)}f_{_{n+1}},\, (x+C)\cap  B^*_{{f_{_{n}}}}(\textbf{x}_n,x)\neq \emptyset\bigr)\,\text{d}\lambda (x)\Bigr)\mathbbm{1}_{A_n} \Bigr]\big/{\mathbb P}(A_n)}\\
     &\leq {\mathbb{E}\Bigl[\exp\Bigl(-u\!\!\!\!\int\limits_{I_{n+1} }\!\! \mathbb{P}_C( D_{C}^{_{(k)}} \!\!\geq 2^{2(d-1)}f_{_{n+1}},\, (x+C)\cap  Q^*_{f_{_{n}}}(x_n,x)\neq \emptyset\bigr)\,\text{d}\lambda (x)\Bigr)\mathbbm{1}_{A_n} \Bigr]\big/{\mathbb P}(A_n)}.    
   \end{align*}
    We rewrite the last term as
    \begin{align*}
        \mathbb{E}\Bigl[\exp\Bigl(-u \smash{\int\limits_{I_{n+1} }}\!\! \mathbb{P}_C\bigl(  (x+C)\cap  Q^*_{f_{_{n}}}(x_n,x) & \, \neq \emptyset\big|\, D_{C}^{_{(k)}} \!\!\geq 2^{2(d-1)}f_{_{n+1}} \bigr) \\& \times \mathbb{P}_C(D_{C}^{_{(k)}} \!\!\geq 2^{2(d-1)}f_{_{n+1}}) \,\text{d}\lambda (x)\Bigr)\mathbbm{1}_{A_n} \Bigr]\big/{\mathbb P}(A_n)
    \end{align*}
    and focus on the integrand. \emph{First}, we bound the conditional probability from below. We use rotation invariance of the law of $C$ to bound the probability of the intersection from below by the $d-1$ dimensional Lebesgue measure of a subset of $S^{d-1}$ of rotations of $C$ for which  we can ensure a non-empty intersection with $Q^*_{f_{_{n}}}(x_n,x)$. Using that~$C$, having $k$ diameters of size at least \smash{$2^{2(d-1)}f_{n+1}$}, needs to hit a target with $k$ diameters with size of order $f_{n}$, at distance at most $2f_{n+1}$, we find using a simple inductive argument over~$d$ and Remark \ref{rem:winkel} such a set of rotations that has measure  \smash{$c f_{n}^{\min\{d-k,k\}}f_{n+1}^{-d+k}$}.\smallskip\pagebreak[3]
    
    \emph{Second}, we look at the factor $\mathbb{P}_C(D_{C}^{_{(k)}} \!\!\geq 2^{2(d-1)}f_{_{n+1}}) $ and recall that the tail of
    $D_{C}^{_{(k)}}$
    is a regularly varying function with index $-\alpha_k$. Using the Potter bounds we get
    \begin{align*}
    \mathbb{P}_C\bigl(  (x+C)\cap  & Q^*_{f_{_{n}}}(x_n,x)\neq \emptyset\,|\, D_{C}^{_{(k)}} \geq 2^{2(d-1)}f_{_{n+1}} \bigr)  \mathbb{P}_C(D_{C}^{_{(k)}} \!\!\geq 2^{2(d-1)}f_{_{n+1}}) \\ & \geq c \frac{f_{n}^{\min\{d-k,k\}}}{f_{n+1}^{d-k}}f_{n+1}^{-(\alpha_k+\varepsilon)},
    \end{align*}
    where $c>0$ depends only on $\varepsilon, \alpha_k, d,k,\epsilon$. As there exists  a further $c>0$  such that, for $f_{_{0}}$ big enough, points in \smash{$O_{n+1}(\textbf{x}_n)$} have distance bigger than $cf_{_{n+1}}$ from the origin, the volume of $I_{n+1}$ is of order \smash{$f_{_{n+1}}^d$}. This yields for $\mathbb{P}[A_{n+1}^{c}|A_n]$ the bound
        \begin{align*}
            \mathbb{E}&\Bigl[\exp\Bigl(-u\!\!\!\!\int\limits_{I_{n+1} }\!\!  c \frac{f_{n}^{\min\{d-k,k\}}}{f_{n+1}^{d-k}}f_{n+1}^{-(\alpha_k+\varepsilon)}  \,\text{d}\lambda (x)\Bigr)\mathbbm{1}_{A_n} \Bigr]\big/{\mathbb P}(A_n) \\
            &\leq \mathbb{E}\Bigl[\exp\Bigl(-u  c f_{n}^{\min\{d-k,k\}}f_{n+1}^{-(\alpha_k+\varepsilon-k)}\Bigr) \mathbbm{1}_{A_n} \Bigr]\big/{\mathbb P}(A_n).
        \end{align*}
For fixed $0<\varepsilon<(\alpha_k-k)^2\epsilon\big/\big(2\min\{d-k,k\}\big)$ we get the inequality
\begin{equation*}
    -\varepsilon\frac{\min\{d-k,k\}}{\alpha_k-k}+\epsilon(\alpha_k +\varepsilon -k) > \frac{\epsilon}{2}(\alpha_k-k) >0,
\end{equation*}
and combining the above with the definition of our threshold sequence and the Potter bounds we get for $\mathbb{P}[A_{n+1}^c | A_n]$ the upper bound 
    \begin{equation*}
     \exp\bigl(  -u c f_{_{n+1}}^{-(\alpha_k + \varepsilon-k)}f_{_{n}}^{\min\{d-k,k\}} \bigr) \leq \exp\bigl(  -u c f_{_{n}}^{\epsilon(\alpha_k -k)/2}\bigr).
    \end{equation*}
    We can now bound the probability of $A_n$ from below as follows.
    \begin{align*}
        \mathbb{P}(A_n) &= {\mathbb{P}(A_0)\prod_{\ell=0}^{n-1} \mathbb{P}[A_{\ell+1}| A_{\ell}]}\\
        &\geq \Bigl(1-\exp\bigl(-uc f_{_{0}}^{d-\alpha_k-\varepsilon}\bigr)\Bigr) \prod\limits_{\ell=0}^{n-1} \Bigl(1- \exp\bigl(  -u c f_{_{\ell}}^{\epsilon(\alpha_k -k)/2}\bigr) \Bigr) \\
        &= \exp\Bigl\{\log\bigl(1-\exp(-uc f_{_{0}}^{d-\alpha_k-\varepsilon})\bigr) +\sum\limits_{\ell=0}^{n-1} \log \bigl(1-\exp (  -u c f_{_{\ell}}^{\epsilon(\alpha_k -k)/2} )\bigr)\Bigr\}.
    \end{align*}
    Using that for small $x<0$ we have $\log(1+x) \geq 2x$ and $\exp(-x)\geq 1-x$ we get
    \begin{align*}
        \mathbb{P}(A_n) &\geq  1- 2\exp(-uc f_{_{0}}^{d-\alpha_k-\varepsilon} ) - 2\sum\limits_{\ell=0}^{\infty} \exp(-u c f_{_{\ell}}^{\epsilon(\alpha_k -k)/2})\\
        &=  1- 2\exp(-uc f_{_{0}}^{d-\alpha_k-\epsilon} ) - 2\sum\limits_{\ell=0}^{\infty}\exp\Bigl(-u c \bigl\{f_{_{0}}^{\epsilon(\alpha_k-k)/2}\bigr\}^{\bigl(\frac{\min\{d-k,k\}}{\alpha_k-k} -\epsilon \bigr)^\ell} \Bigr).  
    \end{align*}
    Due to our assumption that $k<\alpha_k < \min\{2k,d\}$, the last sum is finite and by choosing $f_{_{0}}$ large enough 
    this lower bound can be made arbitrarily close to $1$. \medskip
    
    If \smash{$\alpha_d <d$} we can look at the classic Boolean model with balls as convex grains with radius $D^{_{(d)}}$. Then this model is dense and therefore also robust. 
    In the case that there exists $k$ such that $\alpha_k=k$, we can choose \smash{$f_{_{n}}=f_{_{0}}^n$} for $n\in\mathbb{N}$ with $f_{_{ 0}}>0$ large enough. Using $(A_n)_{n\in\mathbb{N}}$ as before we get by the same arguments
    \begin{equation*}
        \mathbb{P}(A_0) \geq 1-\exp\bigr( -uf_{_{0}}^{d-k-\varepsilon}\bigr)
    \end{equation*}
    and
    \begin{align*}
    \mathbb{P}\bigl(A_{n+1}^c \,|\, A_n\bigr) &\leq \exp\bigl(-uc f_{_{n+1}}^{-\varepsilon} f_{_{n}}^{\min\{d-k,k\}}\bigr)\\
    &= \exp\bigl(-uc f_{_{0}}^{-\varepsilon(n+1) +\min\{d-k,k\}n}\bigr).
    \end{align*}
    The rest of the calculation can be done analogously to the calculation for $\alpha_k\neq k$ by choosing $\varepsilon>0$ small enough.\end{proof}
    
    %
    \subsection{Criteria for non-robustness}
    In this section we prove non-robustness for our model, that is, we prove that
    if $\vol(C)\in \mathcal{L}^2$ and $\alpha_k > 2k $ for every $1\leq k\leq d$, or if $D^{_{(1)}}_C \in \mathcal{L}^d$ the model is non-robust.
    We start with the first of the two conditions, the proof of which is based on the idea of Section 4 in \cite{Hall}. We show that in the Palm version of the point process the expected number of vertices that are connected by a path to the origin is finite for small enough intensity $u$. The result follows from this. For the second criterion we make a remark an the end of the proof.
    
    \begin{proof}[Proof of Theorem~\ref{two} (b)]
    We prove our claim by induction and use throughout the proof the notation 
    \begin{equation*}
        \sum\limits_{_{\textbf{x}_1,\dots,\textbf{x}_{n}\in\mathcal{X}}}^{\neq}  \prod\limits_{i=1}^{n} \mathbbm{1}_{\textbf{x}_{i-1}\sim \textbf{x}_i} := \big|\big\{\textbf{x}_n\in\mathcal{X}\,:\,\sum\limits_{_{\textbf{x}_1,\dots,\textbf{x}_{n-1}\in\mathcal{X}}} \prod\limits_{i=1}^{n} \mathbbm{1}_{\textbf{x}_{i-1}\sim \textbf{x}_i}>0\big\}\big|,
    \end{equation*}
    that is the number of vertices $\textbf{x}_n$ that are the final vertex of a path of length $n$ starting in the vertex $\textbf{x}_0$. 
    Assuming without loss of generality that the origin is the location of a vertex of $\mathcal{X}$ we are first interested in the expected number of vertices that are connected to the origin via a path of length two. This is given by
    \begin{equation*}
        \mathbb{E}_0 \Bigl[\sum\limits_{_{\textbf{y}\in\mathcal{X}}} \mathbbm{1}_{\textbf{0} \overset{2}{\sim} \textbf{y}} \Bigr] = \mathbb{E}_0 \Bigl[\sum\limits_{_{\textbf{x},\textbf{y}\in\mathcal{X}}}^{\neq} \mathbbm{1}_{\textbf{0}\sim \textbf{x}} \mathbbm{1}_{\textbf{x}\sim \textbf{y}} \Bigr].
    \end{equation*}
    After finding an upper bound for this expectation, we will bound
    \begin{equation*}
        \mathbb{E}_0 \Bigl[\sum\limits_{_{\textbf{x}\in\mathcal{X}}} \mathbbm{1}_{\textbf{0} \overset{n}{\sim}\textbf{x}} \Bigr] = \mathbb{E}_0 \Bigl[\sum\limits_{_{\textbf{x}_1,\dots,\textbf{x}_{n}\in\mathcal{X}}}^{\neq}  \prod\limits_{i=1}^{n} \mathbbm{1}_{\textbf{x}_{i-1}\sim \textbf{x}_i} \Bigr]
    \end{equation*}
    from above for $\textbf{x}_0=\textbf{0}$, first for $n\in\mathbb{N}$ even and after that for $n$ odd, and show that  choosing $u$ small enough ensures that
    \begin{equation*}
        \sum\limits_{n\in\mathbb{N}} \mathbb{E}_0\Bigl[\sum\limits_{\textbf{x}\in\mathcal{X}} \mathbbm{1}_{\textbf{0}\overset{n}{\sim} \textbf{x}}\Bigr] < \infty.
    \end{equation*}
    Note that the upper bound for the case $n=1$ is already done in the proof of Proposition~\ref{one}. To bound the expectations from above we dominate our model by replacing the convex body $C$ with diameters $D^{{(1)}},\dots,D^{{(d)}}$ with rectangles $\bar{R}$ similar to Section 2.1. Instead of $C_x$ we look at rectangles $\bar{R}_x$ taken as the sum of $x$ and the cartesian product of the intervals $[-\bar{D}_x^{_{(i)}}, \bar{D}_x^{_{(i)}}]$ with respect to the orthonormal basis given by directions~$p^{_{(i)}}_{C_x}$,   where we use the notation 
    \begin{equation*}
        \bar{L}:=\min\{n\in\mathbb{N}\,:n>L\}, \text{ for } L\geq 0.
    \end{equation*}
    Note that the tails of $\bar{D}^{(1)},\dots,\bar{D}^{(d)}$ can also be bounded via suitable Potter bounds since $D^{(1)},\dots, D^{(d)}$ are regularly varying.
    For $\textbf{x},\textbf{y}\in\mathcal{X}$ we get
    \begin{equation*}
        \mathbbm{1}_{ C_x\cap C_x \neq \emptyset}\leq  \mathbbm{1}_{ \bar{R}_x\cap \bar{R}_y \neq \emptyset}.
    \end{equation*}
    We therefore get 
    \begin{equation}\begin{split}
         \mathbb{E}_0 \Bigl[\sum\limits_{\textbf{y}\in\mathcal{X}} \mathbbm{1}_{\textbf{0}\overset{2}{\sim} \textbf{y}}\Bigr] 
         &\leq  \mathbb{E}_0 \Bigl[\sum\limits_{\textbf{y}\in\mathcal{X}} \mathbbm{1}_{ y\in 2\bar{R}_{0}}\Bigr] + \mathbb{E}_0 \Bigl[\sum\limits_{\textbf{y}\in\mathcal{X}} \mathbbm{1}_{ y \in (2\bar{R}_{{y}}-y)}\Bigr]\\
         &\hspace{-5pt}+ \, 
         \mathbb{E}_0 \Bigl[ \sum\limits_{{j_{_{0}},j_{_{2}} \in\mathbb{N}^d} }\sum\limits_{{\textbf{x},\textbf{y}\in\mathcal{X}}} \mathbbm{1}_{\textbf{0}\sim \textbf{x}} \mathbbm{1}_{\textbf{x}\sim \textbf{y}}  \mathbbm{1}_{\bar{D}_{0}=j_{_{0}}}\mathbbm{1}_{\bar{D}_{y}=j_{_{2}}} \mathbbm{1}_{y \not\in 2\bar{R}_0} \mathbbm{1}_{0 \not\in 2\bar{R}_y}\mathbbm{1}_{\bar{R}_0\cap\bar{R}_y =\emptyset} \Bigr],
\end{split}\label{eq:two_step}\end{equation}
    bounding the expectation from above by counting every vertex in $2\bar{R}_0$, every vertex $\textbf{y}$ such that $0\in2\bar{R}_y$, and for every other vertex $\textbf{y}$ all the vertices $\textbf{x}$ connected by an edge to $\textbf{0}$ and $\textbf{y}$. In addition, we have rewritten the second term using distributional symmetry.%
    \smallskip%
    
    We now focus on the last term in the expression. Recall that $\rho_{v,w}$ is the orientation of the vector $v-w$.
    Set 
    \begin{align*}
        \mathcal{W}^{-}_{w,v} := \,\rot_{\rho_{v,w}}\bigl((-\infty,0] \times \mathbb{R}^{d-1} \bigr)+w, \qquad
        \mathcal{W}^{+}_{w,v} := \,\rot_{\rho_{v,w}}\bigl([0,\infty) \times \mathbb{R}^{d-1} \bigr) +v, 
          \end{align*}
        and
        $\mathcal{W}_{w,v} := \,\mathbb{R}^d \setminus \bigl(\mathcal{W}^{-}_{w,v} \cup \mathcal{W}^{+}_{w,v} \bigr).$
Define for $\textbf{w},\textbf{v}\in\mathcal{X}$ the sets $ \mathcal{T}_{\textbf{w},\textbf{v}}:= \mathcal{T}(\textbf{w},\textbf{v})\cup \mathcal{T}(\textbf{v},\textbf{w})$ where $\mathcal{T}$ is given as 
    \begin{equation*}
        \mathcal{T}(\textbf{w},\textbf{v}) := \Bigl\{\rot_{\rho_{v,w}}\Bigl( \{\bar{D}_{w}^{_{(d)}}+\!\lambda\}\!\times\!  \bigtimes\limits_{i=1}^{d-1} [-\bar{D}_{w}^{_{(i)}}\!\!-\!\! f_i(\textbf{w},\textbf{v},\lambda) , \bar{D}_{w}^{_{(i)}}\!+\!f_i(\textbf{w},\textbf{v},\lambda)] \Bigr)\!+w:\lambda\in(0,\infty)\Bigr\}
    \end{equation*}
    with 
    \begin{equation*}
    f_i(\textbf{w},\textbf{v},\lambda):= \lambda\frac{ \bar{D}_{v}^{_{(i)}} \!- \!\bar{D}_{w} ^{_{(i)}}}{|v-w| -\bar{D}_{v}^{_{(d)}}-\bar{D}_{w}^{_{(d)}}}\mathbbm{1}_{\bar{D}_{v}^{_{(i)}}\geq \bar{D}_{w}^{_{(i)}}} +(\bar{D}_{v}^{_{(i)}} -\bar{D}_{w}^{_{(i)}} )\mathbbm{1}_{\bar{D}_{v}^{_{(i)}}< \bar{D}_{w}^{_{(i)}}}, \text{ for } 1\leq i <d. 
    \end{equation*}
We define a partition \smash{$\bigl(\mathcal{A}_m(\textbf{w},\textbf{v})\bigr)_{m=1,\dots,8}$} of~$\mathbb{R}^d$ which only depends on the locations $w$ and~$v$ and the corresponding diameters of $\textbf{w}$ and $\textbf{v}$, as  
    \begin{equation}\begin{split}
\mathcal{A}_1(\textbf{w},\textbf{v}) &:= \rot_{\rho_{v,w}}\big( [-\bar{D}_{w}^{_{(d)}} , \bar{D}_{w}^{_{(d)}}]\times\bigtimes\limits_{i=1}^{d-1} [-\bar{D}_{w}^{_{(i)}} , \bar{D}_{w}^{_{(i)}}] \big)+w, \\[-9pt]
    \mathcal{A}_2(\textbf{w},\textbf{v}) & :=\mathcal{A}_1(\textbf{v},\textbf{w}),\\       
     \mathcal{A}_3(\textbf{w},\textbf{v}) &:= \mathcal{T}_{\textbf{w},\textbf{v}} \cap  \mathcal{W}^{-}_{w,v}\setminus\bigl(\mathcal{A}_1(\textbf{w},\textbf{v})\cup \mathcal{A}_2(\textbf{w},\textbf{v})\bigr),\\
   \mathcal{A}_4(\textbf{w},\textbf{v}) &:= \mathcal{T}_{\textbf{w},\textbf{v}} \cap \mathcal{W}^{+}_{w,v}\setminus\bigl(\mathcal{A}_1(\textbf{w},\textbf{v})\cup \mathcal{A}_2(\textbf{w},\textbf{v})\bigr),\\ 
     \mathcal{A}_5(\textbf{w},\textbf{v}) &:= \mathcal{T}_{\textbf{w},\textbf{v}} \cap \mathcal{W}_{w,v}\setminus\bigl(\mathcal{A}_1(\textbf{w},\textbf{v})\cup \mathcal{A}_2(\textbf{w},\textbf{v})\bigr),\\ \mathcal{A}_6(\textbf{w},\textbf{v}) &:= \mathcal{W}^{-}_{w,v}  \setminus \bigl( \mathcal{T}_{\textbf{w},\textbf{v}}\cup \mathcal{A}_1(\textbf{w},\textbf{v})\cup \mathcal{A}_2(\textbf{w},\textbf{v})\bigr),\\
   \mathcal{A}_7(\textbf{w},\textbf{v}) &:=
   \mathcal{W}_{w,v} \setminus \bigl( \mathcal{T}_{\textbf{w},\textbf{v}} \cup \mathcal{A}_1(\textbf{w},\textbf{v})\cup \mathcal{A}_2(\textbf{w},\textbf{v})\bigr),\\
        \mathcal{A}_8(\textbf{w},\textbf{v}) &:=  \mathcal{W}^{+}_{w,v} \setminus \bigl(  \mathcal{T}_{\textbf{w},\textbf{v}} \cup \mathcal{A}_1(\textbf{w},\textbf{v})\cup \mathcal{A}_2(\textbf{w},\textbf{v})\bigr).
    \end{split}\label{eq:partition}
\end{equation}
    
    \begin{figure}[!ht]
    \begin{annotate}{
    \includegraphics[width=0.975\linewidth]{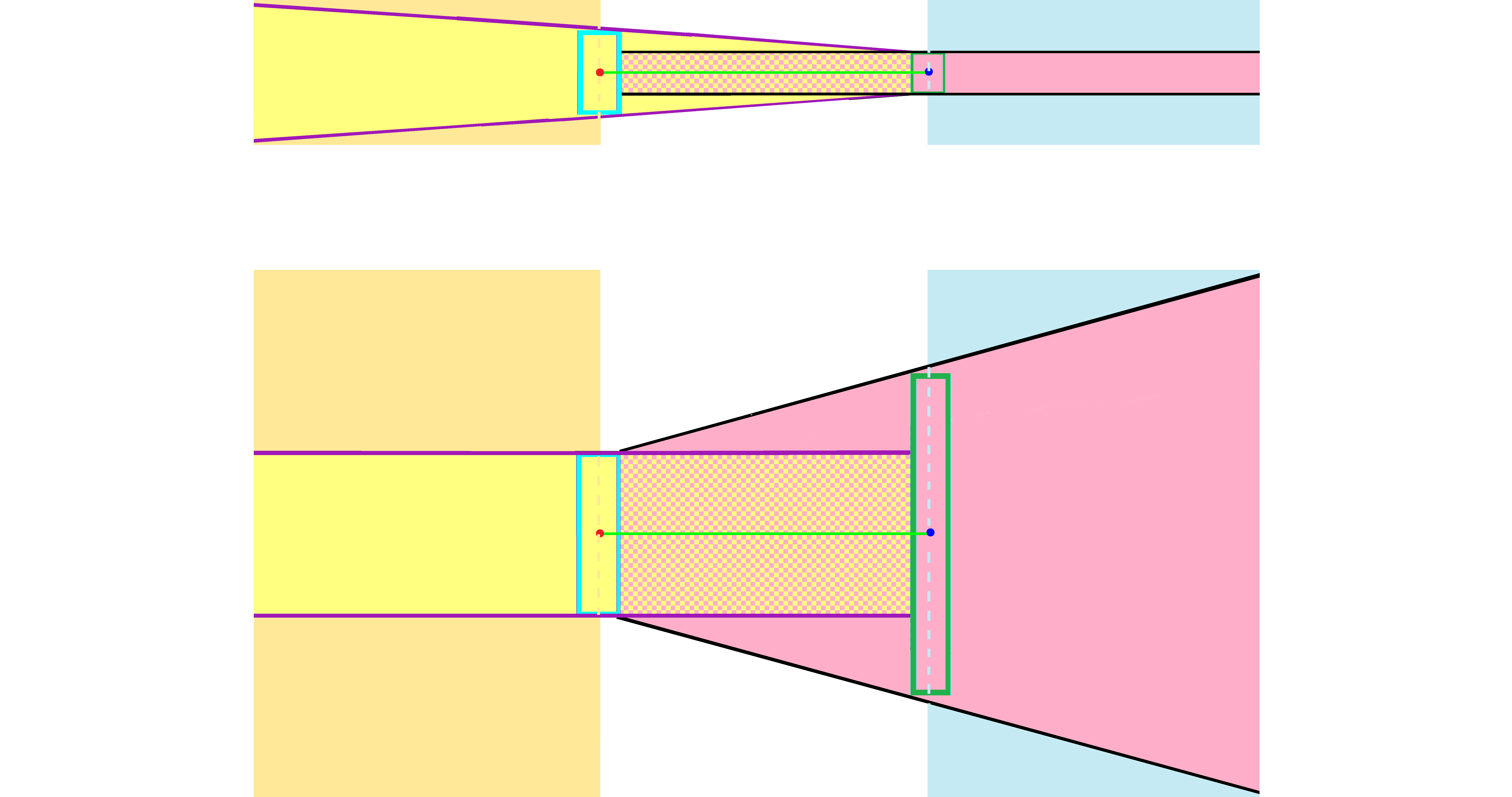}}{0.975}
        \filldraw[draw=candypink,fill=candypink] (7.5,-3.5) rectangle (5.5,-3);
        \node at (6.5,-3.25) {$\mathcal{T}(\textbf{v},\textbf{w})$};
        \filldraw[draw=unmellowyellow,fill=unmellowyellow] (7.5,-2) rectangle (5.5,-2.5);
        \node at (6.5,-2.25) {$\mathcal{T}(\textbf{w},\textbf{v})$};
        \draw[->] (0.2,1.8) -- (-1.5,1.8);
        \draw[->] (2.3,1.8) -- (4,1.8);
        \draw[line width=1.5] (-1.5,2.0) -- (-1.5,1.6);
        \draw[line width=1.5] (4,2.0) -- (4,1.6);
        \draw[dashed,line width=0.5pt] (-1.5,2.75) --  (-1.5,-0.5);
        \draw[dashed,line width=0.5pt] (4,2.95) --  (4,0.95);
        \node at (1.25,1.8) {$\bar{D}_{w}^{_{(3)}}+\lambda$};
        \draw[<->] (5.4,3.15) -- (5.4,3.37);
        \draw[line width=1.5] (5.2,3.15) -- (5.6,3.15);
        \draw[line width=1.5] (5.2,3.37) -- (5.6,3.37);
        \draw[dashed,line width=0.5pt] (4.85,3.15) -- (5.2,3.15);
        \draw[dashed,line width=0.5pt] (4.85,3.37) -- (5.6,3.37);
        \draw[->] (6.5,2.7) .. controls (6.5,3.25) .. (5.6,3.25);
        \node at (5.8,2.4) {$\bar{D}_{w}^{_{(3)}}+\bar{D}_{v}^{_{(1)}} -\bar{D}_{w}^{_{(1)}} $};
        \draw[->] (6,-0.6) -- (6,-1.3);
        \draw[->] (6,0.25) -- (6,0.95);
        \draw[line width=1.5] (5.8,-1.3) -- (6.2,-1.3);
        \draw[line width=1.5] (5.8,0.95) -- (6.2,0.95);
        \draw[dashed,line width=0.5pt] (4,0.95) --  (6,0.95);
        \draw[dashed,line width=0.5pt] (4,-1.3) --  (6,-1.3);
        \node at (5.5,-0.175) {$\bar{D}_{w}^{_{(3)}}+\lambda\frac{ \bar{D}_{v}^{_{(2)}} \!- \!\bar{D}_{w} ^{_{(2)}}}{|v-w| -\bar{D}_{v}^{_{(3)}}-\bar{D}_{w}^{_{(3)}}}$};
        %
        \filldraw[draw=black,fill=white] (-7.5,3.5) rectangle (-5,3);
        \node at (-6.25,3.25) {$\bar{D}_{w}^{_{(1)}} > \bar{D}_{v}^{_{(1)}} $:};
        \filldraw[draw=black,fill=white] (-7.5,1) rectangle (-5,0.5);
        \node at (-6.25,0.75) {$\bar{D}_{w}^{_{(2)}} <\bar{D}_{v}^{_{(2)}} $:};

    \end{annotate}
    \caption{Visualisation of $\mathcal{T}_{\textbf{w},\textbf{v}}$ in 3 dimensions by colouring of the different parts of $\mathcal{T}(\textbf{w},\textbf{v})$ and $\mathcal{T}(\textbf{v},\textbf{w})$. The perspective in the top figure is along $\rot_{\rho_{v,w}}(e_2)$ and along $\rot_{\rho_{v,w}}(e_3)$ in the bottom figure. The turquoise rectangle represents the set \smash{$\rot_{\rho_{v,w}}([-\bar{D}_{w}^{_{(d)}}, \bar{D}_{w}^{_{(d)}}]\times\bigtimes_{i=1}^{d-1} [-\bar{D}_{w}^{_{(i)}} , \bar{D}_{w}^{_{(i)}}])$} and the dark green one represents \smash{$\rot_{\rho_{v,w}}([-\bar{D}_{v}^{_{(d)}}, \bar{D}_{v}^{_{(d)}}]\times\bigtimes_{i=1}^{d-1} [-\bar{D}_{v}^{_{(i)}} , \bar{D}_{v}^{_{(i)}}])$}. 
    The light orange area and the area on the left side of the orange dashed line is $\mathcal{W}^{-}_{w,v}$ and the light blue area and the area on the right side of the blue dashed line $\mathcal{W}^{+}_{w,v}$. The green line is the connection line of $w$ and $v$ given as the red and blue point. 
    The purple lines are $f_{i}(w,v,\lambda)$ and the black lines $f_{i}(v,w,\lambda)$ for $i\in\{2,3\}$ and $\lambda>0$. }
    \label{Twv}
    \end{figure}
    Figure~\ref{Twv} \vpageref{Twv} is a visualisation of $\mathcal{T}_{\textbf{w},\textbf{v}}$ as a union of $\mathcal{T}(\textbf{w},\textbf{v})$  and $\mathcal{T}(\textbf{v},\textbf{w})$. To help with understanding how $\mathcal{T}_{\textbf{w},\textbf{v}}$ relates to the partition $(\mathcal{A}_{m}(\textbf{w},\textbf{v}))_{m=1,..,8}$, see also Figures~\ref{side} to \ref{area} \vpageref{side}.
    In all sub-figures of Figure~\ref{partitionAll3}, $\mathcal{T}_{\textbf{w},\textbf{v}}$ is the union of the black (middle), pink (rightmost) and red (leftmost) shaped volumes. In Figure~\ref{area} the red (left) plane is the boundary of $\mathcal{B}^{_{(1)}}_{w}$ and the pink (right) plane the boundary of $\mathcal{W}^{+}_{w,v}$. It is now possible to recognise the partition $\bigl(\mathcal{A}_m\colon {m=1,\ldots,8}\bigr)$: The set $\mathcal{A}_1(\textbf{w},\textbf{v})$ is the turquoise part and $\mathcal{A}_2(\textbf{w},\textbf{v})$ the dark blue part. Furthermore $\mathcal{A}_3(\textbf{w},\textbf{v}), \mathcal{A}_4(\textbf{w},\textbf{v})$ and $\mathcal{A}_5(\textbf{w},\textbf{v})$ are given respectively by the red, pink and black shapes in the figures. In Figure \ref{side} and \ref{above} we see that $\mathcal{A}_6(\textbf{w},\textbf{v})$ is the white part on the left side of the first orthogonal black line including the line itself, $\mathcal{A}_7(\textbf{w},\textbf{v})$ the white part between both orthogonal black lines and $\mathcal{A}_8(\textbf{w},\textbf{v})$ the white part on the right side of the second orthogonal black line, again including the line itself. Looking at Figure \ref{area}, the first black line is the red plane and the second black line the pink plane, i.e. $\mathcal{A}_6(\textbf{w},\textbf{v})$ is the white part on the left side of the red plane joined with the red plane, $\mathcal{A}_7(\textbf{w},\textbf{v})$ the white part between the red and pink plane and $\mathcal{A}_8(\textbf{w},\textbf{v})$ the white part on the right side of the pink plane joined with the pink plane. 
    \smallskip
    
\begin{figure}[h]
    \begin{subfigure}[t]{.495\linewidth}
        \includegraphics[width=\linewidth]{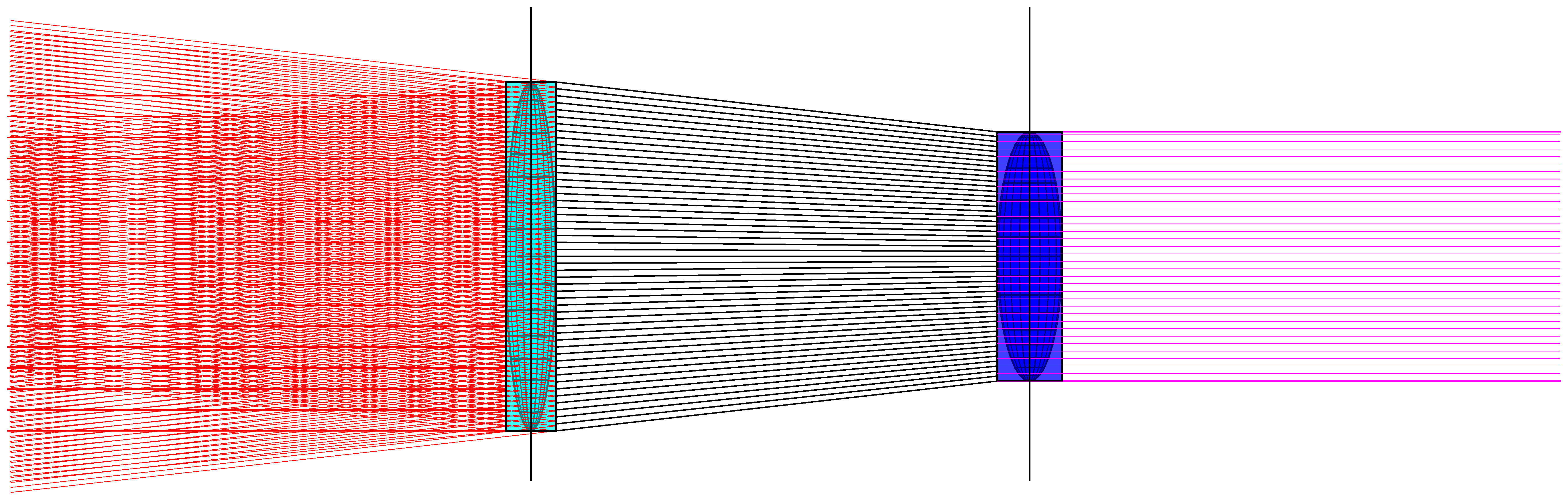}
        \subcaption{The perspective along $\rot_{\rho_{v,w}}(e_2)$.}\label{side}
    \end{subfigure}
    \begin{subfigure}[t]{.495\linewidth}
        \includegraphics[width=\linewidth]{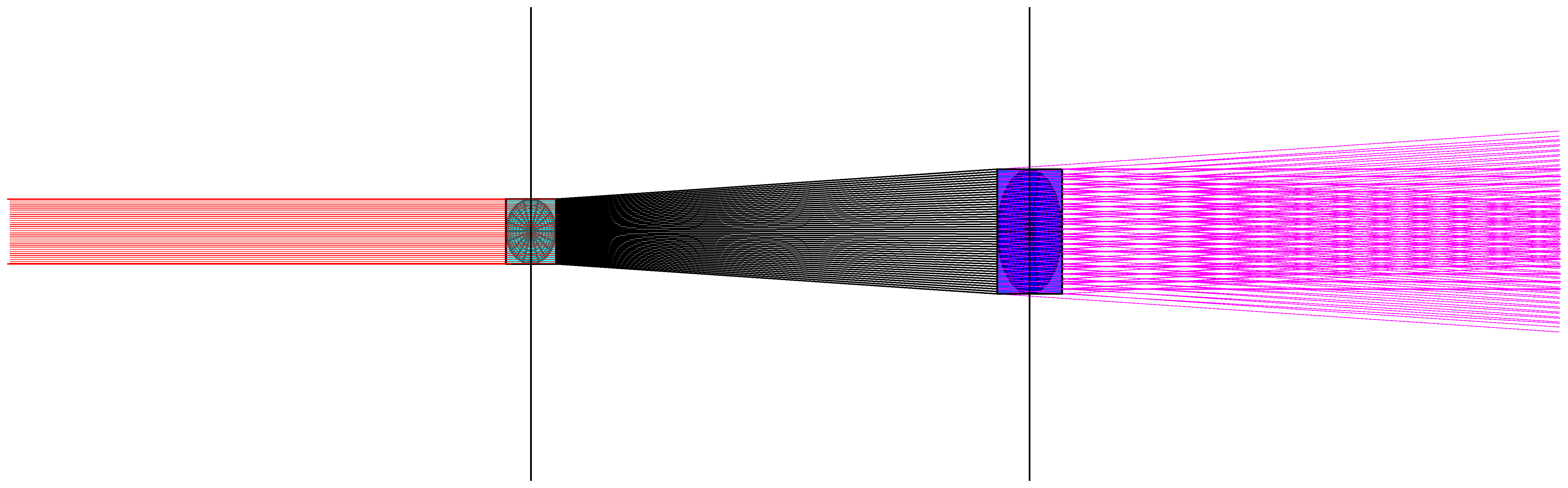}
        \subcaption{The perspective along $\rot_{\rho_{v,w}}(e_3)$.}\label{above}
    \end{subfigure}
    \begin{subfigure}[t]{1\linewidth}
        \begin{annotationimage}{width=12cm}{nonrobust_3d} 
\draw[annotation above = {$\mathcal{A}_1(\textbf{w},\textbf{v})$ at 0.25}] to (0.35 ,0.55 );
\draw[annotation above = {$\mathcal{A}_2(\textbf{w},\textbf{v})$ at 0.75}] to ( 0.65 ,0.45);
\draw[annotation left = {$\mathcal{A}_3(\textbf{w},\textbf{v})$ at 0.9}] to ( 0.1, 0.7);
\draw[annotation above = {$\mathcal{A}_4(\textbf{w},\textbf{v})$ at 0.9}] to ( 0.9,0.4 );
\draw[annotation above = {$\mathcal{A}_5(\textbf{w},\textbf{v})$ at 0.5}] to ( 0.45, 0.55 );
\draw[annotation left = {$\mathcal{A}_6(\textbf{w},\textbf{v})$ at 0.3}] to ( 0.2, 1);
\draw[annotation left = {$\mathcal{A}_6(\textbf{w},\textbf{v})$ at 0.3}] to ( 0.15, 0.35);
\draw[annotation below = {$\mathcal{A}_7(\textbf{w},\textbf{v})$ at 0.4}] to ( 0.55, 0.8);
\draw[annotation below = {$\mathcal{A}_7(\textbf{w},\textbf{v})$ at 0.4}] to ( 0.53, 0.21);
\draw[annotation below = {$\mathcal{A}_8(\textbf{w},\textbf{v})$ at 0.8}] to ( 0.75, 0.7);
\draw[annotation below = {$\mathcal{A}_8(\textbf{w},\textbf{v})$ at 0.8}] to ( 0.73, 0.15);
\end{annotationimage}
\caption{$\mathcal{T}_{\textbf{w},\textbf{v}}$ is the union of $\mathcal{A}_3(\textbf{w},\textbf{v})$, $\mathcal{A}_4(\textbf{w},\textbf{v})$ and $\mathcal{A}_5(\textbf{w},\textbf{v})$.} 
\label{area}
\end{subfigure}
\caption{The partition $\bigl(\mathcal{A}_m(\textbf{w},\textbf{v})\bigr)_{m=1,\dots,8}$ of $\mathbb{R}^3$ from various perspectives.}\label{partitionAll3}
\end{figure}

    We now return to considering the full expression in \eqref{eq:two_step}.
    Using the above partition and setting $p_q$ to be the probability that $(\bar{D}^{(i)})_{i\in\{1,\dots,d\}}$ is equal to 
    $q\in\mathbb{N}^d$, we can upper bound \eqref{eq:two_step} by
    \begin{equation}\begin{split}
         & u   \sum\limits_{j_{_{0}}\in\mathbb{N}^d} p_{j_{_{0}}}
        \prod\limits_{i=1}^d 4j_{_{0}}^{_{(i)}}+
         u   \sum\limits_{j_{_{2}}\in\mathbb{N}^d} p_{j_{_{2}}}\prod\limits_{i=1}^d 4j_{_{2}}^{_{(i)}}  \\
         & \hspace{0pt}+ \sum\limits_{m=1}^8  \mathbb{E}_0 \Bigl[ \sum\limits_{_{j_{_{0}},j_{_{2}} \in\mathbb{N}^d} }\sum\limits_{_{\textbf{x},\textbf{y}\in\mathcal{X}}} \mathbbm{1}_{\textbf{0}\sim \textbf{x}} \mathbbm{1}_{\textbf{x}\sim \textbf{y}}  \mathbbm{1}_{\bar{D}_{0}=j_{_{0}}}\mathbbm{1}_{\bar{D}_{y}=j_{_{2}}} \mathbbm{1}_{x\in\mathcal{A}_m(\textbf{0},\textbf{y})}  \mathbbm{1}_{y \not\in 2\bar{R}_0 }\mathbbm{1}_{0 \not\in 2\bar{R}_y} \mathbbm{1}_{\bar{R}_0\cap\bar{R}_y=\emptyset}\Bigr]\\
         &\hspace{0pt}=:u  4^{d}\,  \sum\limits_{j_{_{0}}j_{_{2}}\in\mathbb{N}^d} p_{j_{_{0}}}p_{j_{_{2}}}\Bigl(\prod\limits_{i=1}^d j_{_{0}}^{_{(i)}}+\prod\limits_{i=1}^d j_{_{2}}^{_{(i)}} \Bigr)  + \sum\limits_{m=1}^8 S_m,
    \end{split}\label{eq:sumS}\end{equation}
    where each individual $S_m$ corresponds to the expectation in \eqref{eq:sumS} within the area $\mathcal{A}_m$.\smallskip

    We abbreviate $\gamma=2j_0^{(d)}$ and get upper bounds for $(S_m)_{m=1,\dots,8}$ as 
    \begin{equation*}        
        S_m \leq \, u^2 \, \sum\limits_{_{j_{_{0}},j_{_{2}}\in\mathbb{N}^d} }p_{j_{_{0}}}p_{j_{_{2}}}\int\limits_{\mathbb{R}^d\setminus B_{\gamma}(0)} \, \int\limits_{\mathcal{A}_m(\textbf{0},\textbf{y})} \mathbb{P}_{0,x,y}(\textbf{0}\sim \textbf{x}, \textbf{x}\sim \textbf{y},\textbf{0}\not\sim\textbf{y},0\not\in2\bar{R}_y) \,\text{d}\lambda (x)\text{d}\lambda(y).
    \end{equation*}
    Looking at $\mathbb{P}_{0,x,y}(\textbf{0}\sim \textbf{x}, \textbf{x}\sim \textbf{y}, \textbf{0}\not\sim\textbf{y},0\not\in2\bar{R}_y) $ we use that the event $\{\textbf{0}\sim \textbf{x}, \textbf{x}\sim \textbf{y}\}$ roughly corresponds to the event that the diameters of $\textbf{x}$ are ``big enough'' and their orientations are ``good enough'' so that intersections of the rectangles $\bar R_0$ and $\bar R_x$, and of $\bar R_x$ and $\bar R_y$ are possible. In addition to that we bound the permissible area for the orientation of the diameters of $\bar{R}_x$ from above by assuming that the largest face of $\bar R_0$, resp.~$\bar R_y$, is perpendicular to the vector $x\in\mathbb{R}^d$, resp.~$y-x\in\mathbb{R}^d$. It can easily be checked that given $j_0$, resp.~$j_2$, the set of rotations that result in an intersection under this assumption is larger than for any other rotation of $\bar R_0$, resp.~$\bar R_y$.  With this we have 
    \begin{equation*}
        \mathbb{P}_{0,x,y}\biggl(\begin{array}{c}\textbf{0}\sim \textbf{x}, \textbf{x}\sim \textbf{y},\\\textbf{0}\not\sim\textbf{y},0\not\in2\bar{R}_y\end{array}\biggr) \leq  \sum\limits_{k=1}^d \mathbb{P}_{0,x,y}\biggl(\begin{array}{c}\bar{D}_{x}^{_{(k)}}\geq \max\bigl(\dist(\bar{R}_{{0}} , x), \dist(\bar{R}_{{y}} , x)\bigr)\,, \\\bar{R}_{{0}}\cap \bar{R}_{{x}}\neq \emptyset, 
        \bar{R}_{{x}}\cap \bar{R}_{{y}}\neq \emptyset,\,\textbf{0}\not\sim\textbf{y},0\not\in2\bar{R}_y\end{array}\biggr).
    \end{equation*}
    \pagebreak[3]
    
    Looking now just at the first summand $S_1$ from \eqref{eq:sumS} and considering a given pair $j_{_{0}},j_{_{2}}\in\mathbb{N}^d$, we can bound the integral of $S_1$ from above by
    \begin{align*}
        %
        \sum\limits_{k=1}^d &\int\limits_{_{\mathbb{R}^d\setminus B_{\gamma}(0)}} \, \int\limits_{_{\mathcal{A}_1(\textbf{0},\textbf{y})}} \mathbb{P}_{0,x,y}\biggl(\begin{array}{c}\bar{D}_{x}^{_{(k)}}\geq \max\bigl(\dist(\bar{R}_{{0}} , x), \dist(\bar{R}_{{y}} , x)\bigr)\,, \\\bar{R}_{{0}}\cap \bar{R}_{{x}}\neq \emptyset, 
        \bar{R}_{{x}}\cap \bar{R}_{{y}}\neq \emptyset,\,\textbf{0}\not\sim\textbf{y}\end{array}\biggr)
        \,\text{d}\lambda (x)\text{d}\lambda(y)\\
        &\leq   \sum\limits_{k=1}^d \int\limits_{\mathbb{R}^d\setminus B_{\gamma}(0)} \, \int\limits_{\mathcal{A}_1(\textbf{0},\textbf{y})} c(|x|+|y|)^{-(\alpha_k- \varepsilon)} \prod\limits_{s=1}^{d-k} \frac{j_{_{2}}^{_{(s)}}}{|x|+|y|} \,\text{d}\lambda (x)\text{d}\lambda(y), 
        \end{align*}
        where we used that $\max\{\dist(\bar{R}_{{0}} , x), \dist(\bar{R}_{{y}} , x)\} \asymp |x|+|y|$ and the above assumption on the orientation of $\bar{R}_x$. We also used the Potter bounds resulting in the $\varepsilon$ term. Note that similar Potter bounds will also appear in the upper bounds of $S_m$ for $m\in\{2,3,\dots,8\}$. Using that the exponent $-(\alpha_k-\varepsilon+d-k)$ is negative we bound $|x|$ from below by $0$ and obtain the following upper bound for the last expression
        \begin{align*}
        %
        %
        \sum\limits_{k=1}^d \prod\limits_{s=1}^{d-k} j_{_{2}}^{_{(s)}} \int\limits_{\mathbb{R}^d\setminus B_{\gamma}(0)} &\, c|y|^{-(\alpha_k- \varepsilon +d -k)}  \lambda( \mathcal{A}_1(\textbf{0},\textbf{y}) )\,\text{d}\lambda(y) \\
        &\leq  c \prod\limits_{s=1}^{d-1} j_{_{2}}^{_{(s)}}  \, \prod\limits_{l=1}^d j_{_{0}}^{_{(l)}}  \sum\limits_{k=1}^d \int\limits_{\mathbb{R}^d\setminus B_1(0)} |y|^{-(\alpha_k- \varepsilon +d-k) }   \,\text{d}\lambda(y) \\
        &=  c \prod\limits_{s=1}^{d-1} j_{_{2}}^{_{(s)}}  \, \prod\limits_{l=1}^d j_{_{0}}^{_{(l)}} \sum\limits_{k=1}^d \int\limits_{1}^{\infty} r^{-(\alpha_k- \varepsilon+d-k-d+1)} \,\text{d} r.
    \end{align*}
    In the inequality we also use that the diameters of $\bar{R}_0$ are all at least $1$, which has as a consequence that $|y|\geq1$.
     Remember that the Potter bounds are given for every $\varepsilon>0$ and so taking $\varepsilon$ small enough, the last integral is finite if $\alpha_k>k$ for all $k\in\{1,\dots,d\}$, which is the case. We consequently have
    \begin{equation}
        S_1 \leq \, cu^2\, \sum\limits_{_{j_{_{0}},j_{_{2}}\in\mathbb{N}^d} }p_{j_{_{0}}}p_{j_{_{2}}}   \prod\limits_{s=1}^{d-1} j_{_{2}}^{_{(s)}}  \, \prod\limits_{l=1}^d j_{_{0}}^{_{(l)}},\label{eq:S1}
    \end{equation}
    and can obtain an analogous upper bound for $S_2$, namely
    \begin{equation}
        S_2  \leq \, cu^2\, \sum\limits_{_{j_{_{0}},j_{_{2}}\in\mathbb{N}^d} }p_{j_{_{0}}}p_{j_{_{2}}}  \prod\limits_{s=1}^{d-1} j_{_{0}}^{_{(s)}}  \, \prod\limits_{l=1}^d j_{_{2}}^{_{(l)}}.\label{eq:S2}
    \end{equation}
    By using similar observations, i.e.\ $\max\{\dist(\bar{R}_{{0}} , x), \dist(\bar{R}_{{y}} , x)\} \asymp |x|+|y|$ in $\mathcal{A}_m(\textbf{0},\textbf{y})$ $m\in\{3,4,6,7,8\}$ and $\max\{\dist(\bar{R}_{{0}} , x), \dist(\bar{R}_{{y}} , x)\} \asymp |y|$ in $\mathcal{A}_5(\textbf{0},\textbf{y})$, the calculation of the upper bounds of $S_m$ for $m\geq 3$ can be related to the bounds for $S_1$ and $S_2$ as follows.
    For $S_3$ we, like before, focus first on the integral with fixed $j_{_{0}},j_{_{2}}\in\mathbb{N}^d$ and get, defining $j:=2(j_{_{0}}^{_{(d)}}+j_{_{2}}^{_{(d)}})$, that
    \begin{align*}
          \int\limits_{\mathbb{R}^d\setminus B_{j}(0)} &\, \int\limits_{\mathcal{A}_3(\textbf{0},\textbf{y})} \mathbb{P}_{0,x,y}(\textbf{0}\sim \textbf{x}, \textbf{x}\sim \textbf{y},\textbf{0}\not\sim\textbf{y},0\not\in2\bar{R}_y)\ \,\text{d}\lambda (x)\text{d}\lambda(y)  \\
        &\leq  c\sum\limits_{k=1}^d \, \int\limits_{_{\mathbb{R}^d\setminus B_{j}(0)} }\int\limits_{\mathcal{A}_3(\textbf{0},\textbf{y})} \prod\limits_{s=1}^{d-k} \frac{\min\{j_{_{0}}^{_{(s)}},j_{_{2}}^{_{(s)}}\}}{|x|+|y|} (|x|+|y|)^{-(\alpha_k- \varepsilon)} \,\text{d}\lambda (x)\text{d}\lambda(y) \\
        &\leq  c\sum\limits_{k=1}^d \prod\limits_{s=1}^{d-k} \min\{j_{_{0}}^{_{(s)}},j_{_{2}}^{_{(s)}}\}\, \int\limits_{_{\mathbb{R}^d\setminus B_{j}(0)} }\int\limits_{\mathcal{A}_3(\textbf{0},\textbf{y})} (|x|+|y|)^{-(\alpha_k- \varepsilon+d-k) } \,\text{d}\lambda (x)\text{d}\lambda(y).
        \end{align*}
        In the first inequality we use the Potter bound to get $(|x|+|y|)^{-(\alpha_k-\varepsilon)}$. The product in the integral appears since we consider orientations of $\bar R_0$ and $\bar R_y$ such that all of the diameters have pairwise the same orientation (i.e.\ the first diameter of $\bar R_0$ has the same orientation as the first diameter of $\bar R_y$, and similarly for the remaining $d-1$ diameters), which gives an upper bound for the largest set of orientations of $\bar R_x$ that result in an intersection with both. The $\min\{j_0^{(s)},j_{2}^{(s)}\}$ part appears as $\bar R_x$ has to intersect $\bar R_0$ and $\bar R_y$ so that the minimum determines the size of this largest set. Together with $|x|+|y|$ and using Remark \ref{rem:winkel} this gives the probability of an appropriate orientation of $\bar R_x$ existing.%
\smallskip

        \begin{figure}[!ht]
    \begin{annotate}{\includegraphics[width=0.9\textwidth]{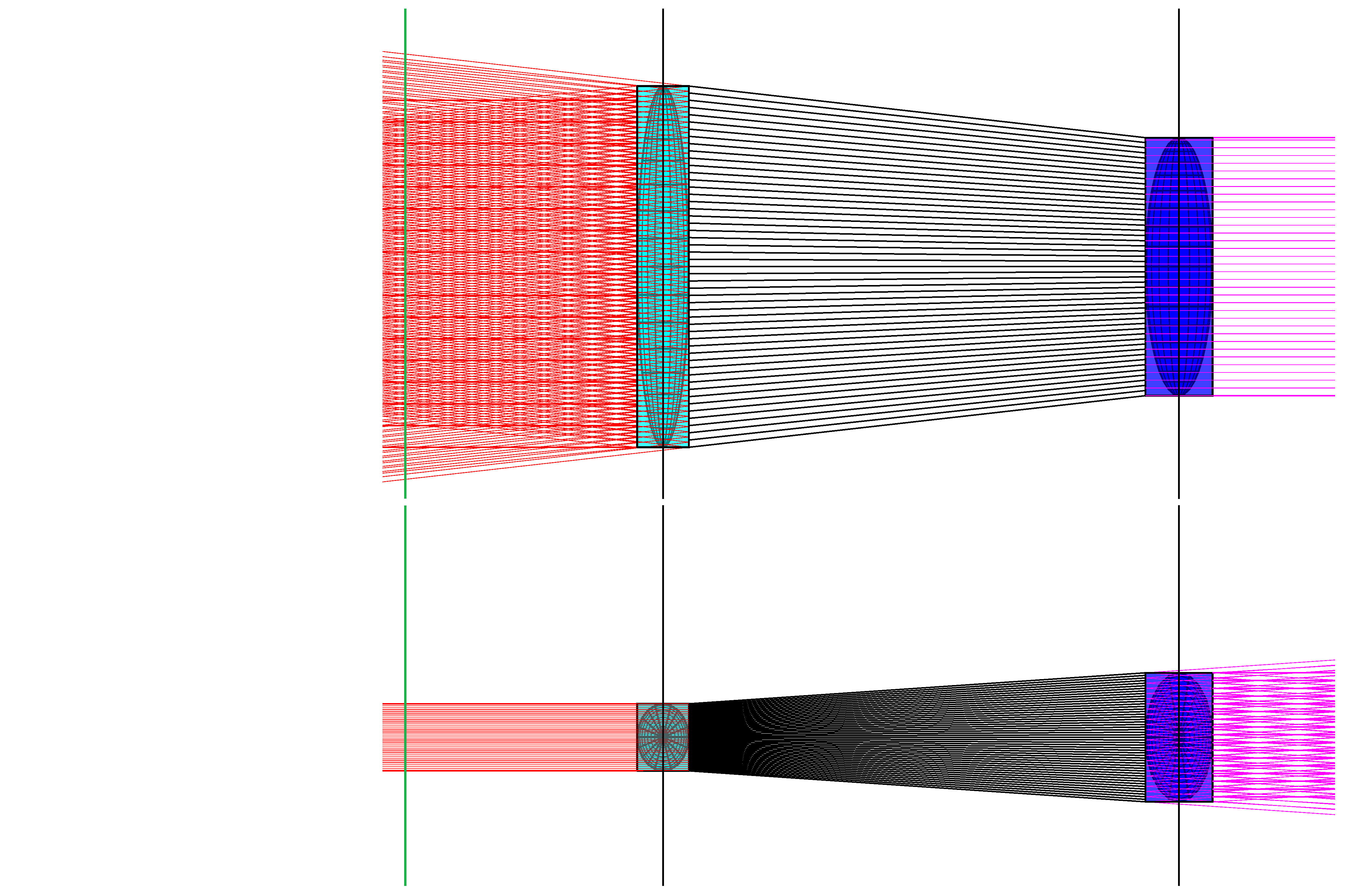}}{1}
        \draw[->] (-1.8*0.9,4.7*0.9) -- (-2.95*0.9,4.7*0.9);
        \draw[line width=1.5pt] (-2.95*0.9,4.3*0.9) -- (-2.95*0.9,5.1*0.9);
        \draw[->] (-1*0.9,4.7*0.9) -- (0.1*0.9,4.7*0.9);
        \node at (-1.4*0.9,4.7*0.9) {$\ell$};
        \draw[->] (4.0*0.9,4.7*0.9) -- (5*0.9,4.7*0.9);
        \draw[line width=1.5pt] (5*0.9,4.3*0.9) -- (5*0.9,5.1*0.9);
        \draw[->] (1.2*0.9,4.7*0.9) -- (0.1*0.9,4.7*0.9);
        \draw[line width=1.5pt] (0.1*0.9,4.3*0.9) -- (0.1*0.9,5.1*0.9);
        \node at (2.625*0.9,4.7*0.9) {$|y|-j/2$};
        \draw[line width=1.5pt] (0.1*0.9,0) -- (0.1*0.9,-0.72);
        \draw[line width=1.5pt] (-0.4,0) -- (-0.4,-0.72);
        \draw[line width=1.5pt] (0.1*0.9,-2.48) -- (0.1*0.9,-1.76);
        \draw[line width=1.5pt] (-0.4,-2.48) -- (-0.4,-1.76);
        \draw[<->] (0.1*0.9,-0.36) -- (-0.4,-0.36);
        \draw[<->] (0.1*0.9,-2.12) -- (-0.4,-2.12);
        \node at (1,-1.3) {$2j_0^{_{(3)}}$};
        \draw[line width=1.1pt] [->] (0.6,-1.25) .. controls (-0.155,-1.25) .. (-0.155,-0.38);
        \draw[line width=1.1pt] [->] (0.6,-1.35) .. controls (-0.155,-1.35) .. (-0.155,-2.10);

        \draw (-1.5*0.9,2.3*0.9) -- (-4.8*0.9, 4.2*0.9);
        \draw (-1.5*0.9,-3.2*0.9) -- (-1.4*0.9, -4.5*0.9 );
        \fill (-1.5*0.9,2.3*0.9) circle (3pt);
        \fill (-1.5*0.9,-3.2*0.9) circle (3pt);
        \node at (-5.5*0.9,4.5*0.9) {$\mathcal{A}_3(\textbf{0},\textbf{y})$};
        \node at (-1.4*0.9,-4.8*0.9) {$\mathcal{A}_3(\textbf{0},\textbf{y})$};
        %
        \draw[dashed,line width=0.5pt] (-2.95*0.9,-2.75*0.9) -- (-4.3*0.9,-2.75*0.9);
        \draw[dashed,line width=0.5pt] (-2.95*0.9,-3.5*0.9) -- (-4.3*0.9,-3.5*0.9);
        \draw[bend left=30] (-4.4*0.9, -3.05*0.9) to (-4.3*0.9,-2.775*0.9);
        \draw[bend left=30] (-4.3*0.9,-3.525*0.9) to (-4.4*0.9, -3.225*0.9);
        \draw[bend left=30] (-4.45*0.9, -3.15 *0.9)to (-4.4*0.9,-3.225*0.9);
        \draw[bend right=30] (-4.45*0.9, -3.15*0.9 )to (-4.4*0.9,-3.025*0.9);
        \node at (-6.2*0.9,-3.15*0.9) {$2\min\Bigl\{j_{_0}^{_{(2)}},j_{_2}^{_{(2)}}\Bigr\}=$};  
        \draw[dashed,line width=0.5pt] (-2.95*0.9, 4.25*0.9) -- (-4.3*0.9,4.25*0.9);
        \draw[dashed,line width=0.5pt] (-2.95*0.9, -0.35*0.9) -- (-4.3*0.9,-0.35*0.9);
        \draw[bend left=15] (-4.4*0.9, 2.07*0.9) to (-4.3*0.9,4.22*0.9);
        \draw[bend left=15] (-4.3*0.9,-0.28*0.9) to (-4.4*0.9, 1.87*0.9);
        \draw[bend left=30] (-4.5*0.9, 1.97*0.9 )to (-4.4*0.9,1.87*0.9);
        \draw[bend right=30] (-4.5*0.9, 1.97 *0.9)to (-4.4*0.9,2.07*0.9);
        \node at (-6.9*0.9,2.47*0.9) {$2\Bigl(\max\Bigl\{j_{_{0}}^{_{(1)}},j_{_{2}}^{_{(1)}}\Bigr\} $};
        \node at (-4.8*0.9,1.97*0.9) {$=$};
        \node at (-6.7*0.9,1.63*0.9) {$+\,\,\frac{\ell|j_{_{0}}^{_{(1)}}-j_{_{2}}^{_{(1)}}|}{|y|-j/2}\Bigr)$};
    \end{annotate}
    \caption{Visualisation of the role of $\ell$ in the calculation for $S_3$, with the perspective along $\rot_{\rho_{v,w}}(e_2)$ in the top and $\rot_{\rho_{v,w}}(e_3)$ in the bottom figure. Note also that $\mathcal{A}_3'(\mathbf{0},\mathbf{y})$ represents the everything ``above'' and ``below'' of the turquoise box including the box itself. Consequently, $\mathcal{A}_3(\mathbf{0},\mathbf{y})\cap \mathcal{A}_3'(\mathbf{0},\mathbf{y})$ represents the two small red areas ``above'' and ``below'' the turquoise box.} 
    \label{fig:ell}
    \end{figure}
    
        The set $\mathcal{A}_3(\textbf{0},\textbf{y})$ can be split in two parts. For that we define
        \begin{equation*}
            \mathcal{A}_{3}^{\prime}(\textbf{0},\textbf{y}):= \rot_{\rho_{y,0}}\bigl([-\bar D ^{(d)}_0,\bar D ^{(d)}_0]\times \mathbb{R}^{d-1} \bigr) + 0
        \end{equation*}
        and look at $\mathcal{A}_{3}(\textbf{0},\textbf{y}) \cap \mathcal{A}_{3}^{\prime}(\textbf{0},\textbf{y}) $ and $ \mathcal{A}_{3}(\textbf{0},\textbf{y}) \setminus \mathcal{A}_{3}^{\prime}(\textbf{0},\textbf{y}) $. 
        Since $-(\alpha_k- \varepsilon+d-k)$ is negative we can bound the first part of $\mathcal{A}_3(\textbf{0},\textbf{y})$ 
        from above via
        \begin{align*}
            c\sum\limits_{k=1}^d& \prod\limits_{s=1}^{d-k} \min\{j_{_{0}}^{_{(s)}},j_{_{2}}^{_{(s)}}\}\, \int\limits_{_{\mathbb{R}^d\setminus B_{j}(0)} }\int\limits_{_{\mathcal{A}_3(\textbf{0},\textbf{y})\cap\mathcal{A}_{3}^{\prime}(\textbf{0},\textbf{y}) }}  (|x|+|y|)^{-(\alpha_k- \varepsilon+d-k) } \,\text{d}\lambda (x)\text{d}\lambda(y)\\
            &\leq c\sum\limits_{k=1}^d \prod\limits_{s=1}^{d-1} \min\{j_{_{0}}^{_{(s)}},j_{_{2}}^{_{(s)}}\}\prod\limits_{t=1}^{d}\max\{j_{_{0}}^{_{(t)}},j_{_{2}}^{_{(t)}}\} \int\limits_{_{\mathbb{R}^d\setminus B_{j}(0)}} |y|^{-(\alpha_k- \varepsilon+d-k) } \text{d}\lambda(y)\\
            &\leq c\sum\limits_{k=1}^d \prod\limits_{s=1}^{d-1} \min\{j_{_{0}}^{_{(s)}},j_{_{2}}^{_{(s)}}\}\prod\limits_{t=1}^{d}\max\{j_{_{0}}^{_{(t)}},j_{_{2}}^{_{(t)}}\} \int\limits_{j}^{\infty} r^{-(\alpha_k- \varepsilon+d-k-d+1) } \text{d}r,
        \end{align*}
        which is finite for $\alpha_k > k$. In the first inequality we use that $|x|\geq0$ and bound the volume of the first part of $\mathcal{A}_3(\textbf{0},\textbf{y})$ from above by $$c\prod\limits_{t=1}^{d} \max\{j_{_{0}}^{_{(t)}},j_{_{2}}^{_{(t)}}\}.$$ 
        To see how, we use that the minimal distance of  $\bar R_0$ and $\bar R_y$ is bigger then $j_0^{_{(d)}}+j_2^{_{(d)}}$ so that the boundary of $\mathcal{A}_3(\textbf{0},\textbf{y})$ has in the direction of the $i$-th diameter of $\mathcal{A}_1(\textbf{0},\textbf{y})$ distance from $\mathcal{A}_1(\textbf{0},\textbf{y})$ given bounded from above by $\max\{j_0^{_{(i)}},j_2^{_{(i)}}\}$ (see Figure \ref{fig:ell}).
\smallskip

        Looking now at $\mathcal{A}_{3}(\textbf{0},\textbf{y}) \setminus \mathcal{A}_{3}^{\prime}(\textbf{0},\textbf{y})$ we are interested in 
        \begin{equation*}
            \ell := \dist\bigl(x,\rot_{\rho_{y,0}}\bigl(\{0\}\times \mathbb{R}^{d-1} + \bar{D}_0^{(d)}e_1\bigr) +0\bigr),
        \end{equation*}
        that is, the distance from $x$ to the hyperplane orthogonal to $y-0$ and intersecting with $\mathcal{A}_1(\mathbf{0},\mathbf{y})$ that is furthest away from $x$ (see again Figure \ref{fig:ell}). This gives us the following upper bound for the second part of $\mathcal{A}_{3}(\textbf{0},\textbf{y})$ 
    \begin{align*}
        & c\sum\limits_{k=1}^d  \prod\limits_{s=1}^{d-k} \min\{j_{_{0}}^{_{(s)}},j_{_{2}}^{_{(s)}}\}\, \!\!\!\!\!\!\!\!\!\!\int\limits_{_{\mathbb{R}^d\setminus B_{j}(0)} } \int\limits_{j_{_{0}}^{_{(d)}}}^{\infty}  (\ell+|y|)^{-(\alpha_k- \varepsilon+d-k) } \prod \limits_{t=1}^{d-1} 2\bigg\{ \min\{j_{_{0}}^{_{(t)}},j_{_{2}}^{_{(t)}}\}\mathbbm{1}_{\min\{j_{_{0}}^{_{(t)}},j_{_{2}}^{_{(t)}}\}=j_{_{0}}^{_{(t)}}} \\
        & \hspace{60pt}+ \Bigl[\max\{j_{_{0}}^{_{(t)}},j_{_{2}}^{_{(t)}}\} + \frac{\ell|j_{_{0}}^{_{(t)}}-j_{_{2}}^{_{(t)}}|}{|y|-j/2} \Bigr]\mathbbm{1}_{ \max\{j_{_{0}}^{_{(t)}},j_{_{2}}^{_{(t)}}\} = j_{_{0}}^{_{(t)}}}\bigg\}\,\text{d}\ell\text{d}\lambda(y) \\
        &\leq c\sum\limits_{k=1}^d \prod\limits_{s=1}^{d-k} \min\{j_{_{0}}^{_{(s)}},j_{_{2}}^{_{(s)}}\} \prod \limits_{t=1}^{d-1}  \max\{j_{_{0}}^{_{(t)}},j_{_{2}}^{_{(t)}}\} \int\limits_{_{\mathbb{R}^d\setminus B_{j}(0)} }\int\limits_{j_{_{0}}^{_{(d)}}}^{\infty}  (\ell+|y|)^{-(\alpha_k- \varepsilon+d-k) } \\
        &\hspace{195pt} \times\smash{\Bigl(1+\frac{\ell}{|y|-j/2}\Bigr)^{d-1} \,\text{d}\ell \text{d}\lambda(y)}
          \end{align*}
    \begin{align*}
        &\leq c\sum\limits_{k=1}^d\prod\limits_{s=1}^{d-1} \min\{j_{_{0}}^{_{(s)}},j_{_{2}}^{_{(s)}}\} \prod \limits_{t=1}^{d-1}  \max\{j_{_{0}}^{_{(t)}},j_{_{2}}^{_{(t)}}\}  \int\limits_{j}^{\infty} r^{d-1} (r/2)^{1-d} \int\limits_{j_{_{0}}^{_{(d)}}}^{\infty} (\ell+r)^{-(\alpha_k- \varepsilon+1-k)} \,\text{d}\ell\text{d}r ,
        %
    \end{align*}
    where we used in the last inequality that $|y|\geq j$ to obtain the term $(r/2)^{1-d}$.
    This integral is finite if $\alpha_k>k+1$, which is the case. In summary we get
    \begin{equation}
        S_3\leq \,  c u^2 \, \sum\limits_{_{j_{_{0}},j_{_{2}}\in\mathbb{N}^d} }p_{j_{_{0}}}p_{j_{_{2}}}  \prod \limits_{s=1}^{d-1} j_{_{0}}^{_{(s)}}j_{_{2}}^{_{(s)}}.\label{eq:S3}
    \end{equation}  
    Using the symmetry of the partition we get the same upper bound for $S_4$. 
    For $S_5$ we~get
    \begin{align*}
        \int\limits_{_{\mathbb{R}^d\setminus B_{\gamma}(0)}}& \, \int\limits_{_{\mathcal{A}_5(\textbf{0},\textbf{y})} }\mathbb{P}_{0,x,y}(\textbf{0}\sim \textbf{x}, \textbf{x}\sim \textbf{y},\textbf{0}\not\sim \textbf{y},0\not\in2\bar{R}_y)  \,\text{d}\lambda (x)\text{d}\lambda(y) \\
        &\leq  c\sum\limits_{k=1}^d \int\limits_{_{\mathbb{R}^d\setminus B_{\gamma}(0) }}\, \int\limits_{_{\mathcal{A}_5(\textbf{0},\textbf{y})}} \prod\limits_{s=1}^{d-k} \frac{\min\{j_{_{0}}^{_{(s)}},j_{_{2}}^{_{(s)}}\}}{|y|} |y|^{-(\alpha_k- \varepsilon)} \,\text{d}\lambda (x)\text{d}\lambda(y). 
        \end{align*}
    As before we used the Potter bound to get $|y|^{-(\alpha_k-\varepsilon)}$, and consider again that the orientations of $\bar{R}_0$ and $\bar{R}_y$ are such that all of the diameters have pairwise the same orientation. Additionally the smallest diameters are taken to have the same orientation as $|y|$. This gives us the largest possible area relative to the location of the connector~$\mathbf{x}$, 
    which we use to obtain an upper bound for the size of the sets of orientations that result in an intersection of $\bar{R}_x$ with both $\bar{R}_0$ and $\bar{R}_y$, giving the stated inequality. For the orientation of $\textbf{x}$ we use Remark \ref{rem:winkel} as before.\smallskip
    
    Using how $\mathcal{A}_5(\textbf{0},\textbf{y})$ is defined, we can bound the last expression from above as
    \begin{align*}
        c\sum\limits_{k=1}^d \int\limits_{_{\mathbb{R}^d\setminus B_{\gamma}(0) }}&\,\prod\limits_{s=1}^{d-k} \frac{\min\{j_{_{0}}^{_{(s)}},j_{_{2}}^{_{(s)}}\}}{|y|} \lambda(\mathcal{A}_5(\textbf{0},\textbf{y}))|y|^{-(\alpha_k- \varepsilon)} \,\text{d}\lambda(y)\\
        &\leq
         c\sum\limits_{k=1}^d \prod\limits_{s=1}^{d-1} \min\{j_{_{0}}^{_{(s)}},j_{_{2}}^{_{(s)}}\}  \int\limits_{\mathbb{R}^d\setminus B_1(0)}|y|  \prod\limits_{t=1}^{d-1} (j_{_{0}}^{_{(t)}}+j_{_{2}}^{_{(t)}}) |y|^{-(\alpha_k- \varepsilon)-(d-k)}\,\text{d} \lambda(y)\\
         & \leq c\sum\limits_{k=1}^d \prod\limits_{s=1}^{d-1} \min\{j_{_{0}}^{_{(s)}},j_{_{2}}^{_{(s)}}\}  \prod\limits_{t=1}^{d-1} (j_{_{0}}^{_{(t)}}+j_{_{2}}^{_{(t)}}) \int\limits_{1}^{\infty} r^{-(\alpha_k- \varepsilon-k)}\,\text{d} r.
    \end{align*}
    As before, the last expression is finite, since $\alpha_k > 2k$ and therefore $\alpha_k>k+1$ is fulfilled. For $t\in\{1,\dots,d-1\}$ we have $j_{_{0}}^{(t)}+j_2^{(t)}\leq 2\max\{j_{_0}^{(t)},j_{_2}^{(t)}\}$, so we can bound $S_5$ from above by
    \begin{equation}
        S_5 \leq \, c u^2\, \sum\limits_{_{j_{_{0}},j_{_{2}}\in\mathbb{N}^d} }p_{j_{_{0}}}p_{j_{_{2}}} \prod\limits_{s=1}^{d-1} j_{_{0}}^{_{(s)}}j_{_{2}}^{_{(s)}}.\label{eq:S5}
    \end{equation}
    It remains to find bounds for $S_6, S_7$ and $S_8$. As before it is enough to find upper bounds for $S_6$ and $S_7$, as the upper bound for $S_6$ is also the upper bound for $S_8$ due to symmetry.
    For the integral of $S_6$ we get 
    \begin{align*}
        &\int\limits_{_{\mathbb{R}^d\setminus B_{\gamma}(0)}} \, \int\limits_{_{\mathcal{A}_6(\textbf{0},\textbf{y})}} \, \mathbb{P}_{0,x,y}(\textbf{0}\sim \textbf{x}, \textbf{x}\sim \textbf{y},\textbf{0}\not\sim\textbf{y},0\not\in2\bar{R}_y)\,\text{d}\lambda (x)\text{d}\lambda(y)
        \end{align*}
   \begin{align}  
        &\hspace{15pt}\leq c\sum\limits_{k=1}^d\, \int\limits_{_{\mathbb{R}^d\setminus B_{\gamma}(0)}} \int\limits_{_{\mathcal{A}_6(\textbf{0},\textbf{y})}} \!\!\! \prod\limits_{s=1}^{d-k} \!\frac{j_{_{0}}^{_{(s)}}j_{_{2}}^{_{(s)}}}{(|x|+|y|)^2 } \max\bigl\{\dist(\bar{R}_{{0}},x), \dist(\bar{R}_{{y}},x), 
        \bigr\}^{\!-(\alpha_k- \varepsilon)} 
        %
        \text{d}\lambda (x)\text{d}\lambda(y) \notag\\   
        &\hspace{15pt}\leq c\sum\limits_{k=1}^d\, \int\limits_{_{\mathbb{R}^d\setminus B_{\gamma}(0)} }\, \int\limits_{_{\mathcal{A}_6(\textbf{0},\textbf{y})}}  \prod\limits_{s=1}^{d-k} \frac{j_{_{0}}^{_{(s)}}j_{_{2}}^{_{(s)}}}{(|x|+|y|)^2 } (|x|+|y|)^{-(\alpha_k- \varepsilon)} \,\text{d}\lambda (x)\text{d}\lambda(y)\label{eq:S6_calc}\\
        &\hspace{15pt}= c \sum\limits_{k=1}^d\,  \prod\limits_{s=1}^{d-k} j_{_{0}}^{_{(s)}}j_{_{2}}^{_{(s)}} \int\limits_{_{\mathbb{R}^d\setminus B_{\gamma}(0)} }\, \int\limits_{_{\mathcal{A}_6(\textbf{0},\textbf{y})}} (|x|+|y|)^{-(\alpha_k- \varepsilon+2d-2k)}\, \text{d}\lambda (x)\text{d}\lambda(y).\notag
    \end{align}
    In the first inequality, the product is an upper bound for the probability that $\bar{R}_0$ and $\bar{R}_y$ intersect with $\bar{R}_x$ under the condition that the diameters are large enough to result in an intersection. As in the previous calculations, we used Remark \ref{rem:winkel}. The second term, namely \smash{$\max\{\dist(\bar{R}_{{0}},x), \dist(\bar{R}_{{y}},x)\}^{\!-(\alpha_k- \varepsilon)}$} comes from the Potter bounds.
    
    \begin{myrem}
        A careful reader might note that for $S_6$ and $S_7, S_8$ below it does not suffice that only the largest diameter of the connector is big. To ensure that $\bar{R}_x$ intersects with $\bar{R}_0$ and $\bar{R}_y$, at least two of the diameters of $\textbf{x}$ have to be large enough. This however has no effect on the calculations, beyond changing the value of~$c$ by a factor of $\frac{d-1}{d-2}$.
    \end{myrem}
    
    Focusing now on the construction of $\mathcal{A}_6(\textbf{0},\textbf{y})$, we begin by reformulating it. Using $j:=2(j_0^{_{(d)}}+j_2^{_{(d)}})$ as before, we define $a_i(\ell)$ for $i\in\{2,\dots,d\}$ and $\ell>0$, as
    \begin{align*}
        a_i(\ell) \!&:= \rot_{\rho_{y,0}}\!\Bigl(e_i\min\{j_{_{0}}^{_{(i-1)}},j_{_{2}}^{_{(i-1)}}\}\mathbbm{1}_{\min\{j_{_{0}}^{_{(i-1)}},j_{_{2}}^{_{(i-1)}}\}= j_{_{0}}^{_{(i-1)}}}\\
        &\hspace{40pt}+\!e_i\Bigl(\max\{j_{_{0}}^{_{(i-1)}},j_{_{2}}^{_{(i-1)}}\} \!+\! \frac{(\ell+j_0^{(d)})|j_{_{0}}^{_{(i-1)}}-j_{_{2}}^{_{(i-1)}}|}{|y|-j/2} \Bigr)\mathbbm{1}_{\max\{j_{_{0}}^{_{(i-1)}},j_{_{2}}^{_{(i-1)}}\}= j_{_{0}}^{_{(i-1)}}}\Bigr)
    \end{align*}
    (see Figure \ref{fig:a-i}; $a_i(\ell)$ is one of the two vertical values, depending on which of the two $i$-th diameters considered is the larger one). With this, we define
    \begin{equation*}
        \mathcal{A}_6^{\prime}(j_0,j_2) := (-\infty,0]\times\bigl(\bigtimes\limits_{i=2}^d \bigl(-\infty,-|a_i(\ell)|\bigr]\cup \bigl[|a_i(\ell)|,\infty\bigr)\bigr).
    \end{equation*}
      \begin{figure}[!ht]
    \begin{annotate}{\includegraphics[width=0.9\textwidth]{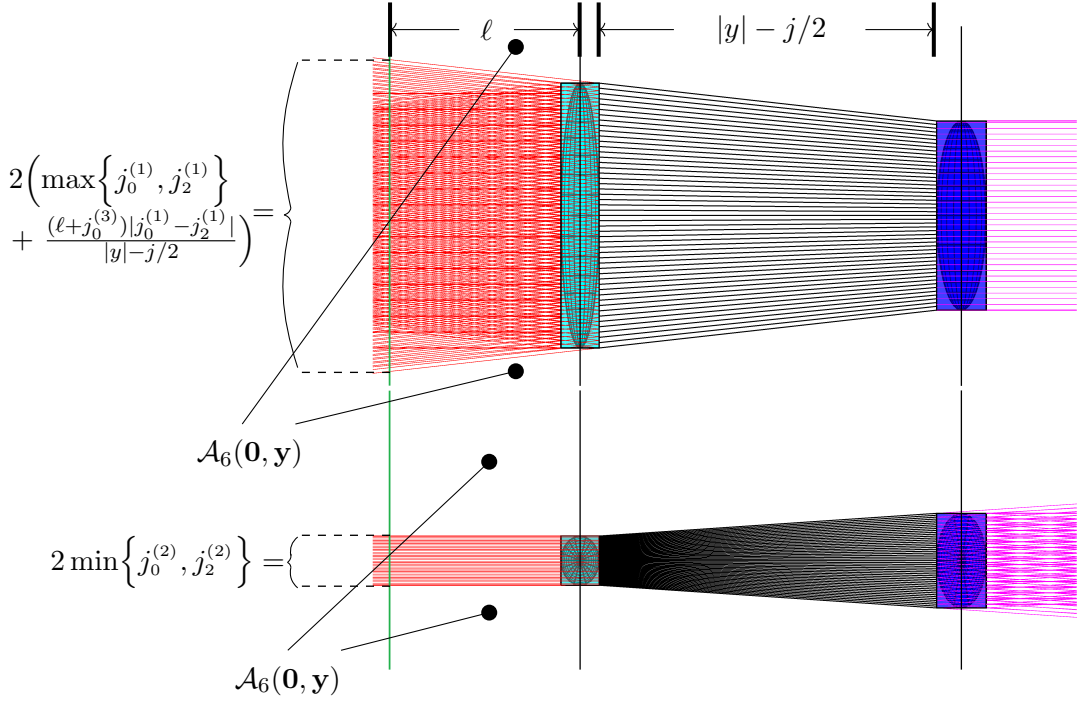}}{1}
        \draw[->] (-1.8*0.9-0.125,4.7*0.9) -- (-2.95*0.9,4.7*0.9);
        \draw[line width=1.5pt] (-2.95*0.9,4.3*0.9) -- (-2.95*0.9,5.1*0.9);
        \draw[->] (-1*0.9-0.125,4.7*0.9) -- (0.1*0.9-0.25,4.7*0.9);
        \draw[line width=1.5pt] (0.1*0.9-0.25,4.3*0.9) -- (0.1*0.9-0.25,5.1*0.9);
        \node at (-1.4*0.9-0.125,4.7*0.9) {$\ell$};
        \draw[->] (4.0*0.9,4.7*0.9) -- (5*0.9,4.7*0.9);
        \draw[line width=1.5pt] (5*0.9,4.3*0.9) -- (5*0.9,5.1*0.9);
        \draw[->] (1.2*0.9,4.7*0.9) -- (0.1*0.9,4.7*0.9);
        \draw[line width=1.5pt] (0.1*0.9,4.3*0.9) -- (0.1*0.9,5.1*0.9);
        \node at (2.625*0.9,4.7*0.9) {$|y|-j/2$};
        \draw (-1,4) -- (-4.6, -1);
        \fill (-1,4) circle (3pt);
        \draw (-1,-0.3) -- (-3.9, -1);
        \fill (-1,-0.3) circle (3pt);
        \node at (-4.5,-1.4) {$\mathcal{A}_6(\textbf{0},\textbf{y})$};
        
        \draw (-1.5*0.9,-1.5) -- (-4, -4);
        \fill (-1.5*0.9,-1.5) circle (3pt);
        \draw (-1.5*0.9,-3.5) -- (-3.3, -4);
        \fill (-1.5*0.9,-3.5) circle (3pt);
        \node at (-4,-4.4) {$\mathcal{A}_6(\textbf{0},\textbf{y})$};
        %

        \draw[dashed,line width=0.5pt] (-2.95*0.9,-2.75*0.9) -- (-4.3*0.9,-2.75*0.9);
        \draw[dashed,line width=0.5pt] (-2.95*0.9,-3.5*0.9) -- (-4.3*0.9,-3.5*0.9);
        \draw[bend left=30] (-4.4*0.9, -3.05*0.9) to (-4.3*0.9,-2.775*0.9);
        \draw[bend left=30] (-4.3*0.9,-3.525*0.9) to (-4.4*0.9, -3.225*0.9);
        \draw[bend left=30] (-4.45*0.9, -3.15 *0.9)to (-4.4*0.9,-3.225*0.9);
        \draw[bend right=30] (-4.45*0.9, -3.15*0.9 )to (-4.4*0.9,-3.025*0.9);
        \node at (-6.2*0.9,-3.15*0.9) {$2\min\Bigl\{j_{_0}^{_{(2)}},j_{_2}^{_{(2)}}\Bigr\}=$};  
        \draw[dashed,line width=0.5pt] (-2.95*0.9, 4.25*0.9) -- (-4.3*0.9,4.25*0.9);
        \draw[dashed,line width=0.5pt] (-2.95*0.9, -0.35*0.9) -- (-4.3*0.9,-0.35*0.9);
        \draw[bend left=15] (-4.4*0.9, 2.07*0.9) to (-4.3*0.9,4.22*0.9);
        \draw[bend left=15] (-4.3*0.9,-0.28*0.9) to (-4.4*0.9, 1.87*0.9);
        \draw[bend left=30] (-4.5*0.9, 1.97*0.9 )to (-4.4*0.9,1.87*0.9);
        \draw[bend right=30] (-4.5*0.9, 1.97 *0.9)to (-4.4*0.9,2.07*0.9);
        \node at (-6.9*0.9,2.47*0.9) {$2\Bigl(\max\Bigl\{j_{_{0}}^{_{(1)}},j_{_{2}}^{_{(1)}}\Bigr\} $};
        \node at (-4.8*0.9,1.97*0.9) {$=$};
        \node at (-6.7*0.9,1.63*0.9) {$+\,\,\frac{(\ell+j_0^{(3)})|j_{_{0}}^{_{(1)}}-j_{_{2}}^{_{(1)}}|}{|y|-j/2}\Bigr)$};
        
    \end{annotate}
    \caption{Visualisation of the role of $\ell$ in the calculation for $S_6$, with the perspective along $\rot_{\rho_{v,w}}(e_2)$ in the top and $\rot_{\rho_{v,w}}(e_3)$ in the bottom figure. Note that $\ell$ is different from $\ell$ in the calculation of $S_3$.} 
    \label{fig:a-i}
    \end{figure}
    \noindent
    Using this we can rewrite $\mathcal{A}_6(\textbf{0},\textbf{y})$ and bound \eqref{eq:S6_calc} from above by 
    \begin{align*}
        c\sum\limits_{k=1}^d\, \prod\limits_{s=1}^{d-k} & j_{_{0}}^{_{(s)}}j_{_{2}}^{_{(s)}} \int\limits_{\mathbb{R}^d\setminus B_{\gamma}(0)} \int\limits_{\rot_{\rho_{y,0}}\bigl(\mathcal{A}_6^{\prime}(j_0,j_2)\bigr)+0}\Bigl(|x|_1+|y| \Bigr)^{-(\alpha_k- \varepsilon+2d-2k)}  \,\text{d}\lambda(x)\text{d}\lambda(y) \\
        &\leq c\sum\limits_{k=1}^d\, \prod\limits_{s=1}^{d-k}  j_{_{0}}^{_{(s)}}j_{_{2}}^{_{(s)}} \int\limits_{\mathbb{R}^d\setminus B_1(0)} \Bigl(\sum\limits_{i=2}^{d} \min\{j_{_{0}}^{_{(i)}},j_{_{2}}^{_{(i)}}\} +|y| \Bigr)^{-(\alpha_k- \varepsilon+d-2k)}  \,\text{d}\lambda(y) \\
        &\leq c\sum\limits_{k=1}^d\, \prod\limits_{s=1}^{d-k} j_{_{0}}^{_{(s)}}j_{_{2}}^{_{(s)}}  \int\limits_{1}^{\infty} r^{d-1}  r^{-(\alpha_k- \varepsilon+d-2k)} dr,
    \end{align*}
    with $|\cdot|_1$ denoting the $1$-norm. In the first inequality we bound the integral from above by integrating over $x$ coordinate by coordinate, using the additivity of the $1$-norm and by increasing the integration area of $y$. Using polar coordinates yields the second inequality.
    The last expression is finite since $\alpha_k>2k$ for $k\in\{2,\dots,d\}$. Therefore the upper bound for $S_6$ is given by
    \begin{equation}
        S_6\leq \, cu^2\, \sum\limits_{_{j_{_{0}},j_{_{2}}\in\mathbb{N}^d} }p_{j_{_{0}}}p_{j_{_{2}}}  \prod\limits_{s=1}^{d-1} j_{_{0}}^{_{(s)}}j_{_{2}}^{_{(s)}}.\label{eq:S6}
    \end{equation}
    
   For $S_7$ we similarly define 
   \begin{equation*}
       \mathcal{A}_7^{\prime}(j_{_{0}},j_{_{2}}):= (0,|y|)\times \Bigl(\bigtimes\limits_{i=2}^{d} \bigl(-\infty,-\min\{j_{_{0}}^{_{(i-1)}},j_{_{2}}^{_{(i-1)}}\}\bigr]\cup\bigl[\min\{j_{_{0}}^{_{(i-1)}},j_{_{2}}^{_{(i-1)}}\},\infty\bigr)\Bigr).
   \end{equation*}
    Using the definition of $\mathcal{A}_{7}(\textbf{0},\textbf{y})$ we bound the integral $S_7$ for fixed $j_{_{0}},j_{_{2}}$ from above by replacing $\mathcal{A}_{7}(\textbf{0},\textbf{y})$ in the bound of the integral with the larger $\mathcal{A}_{7}'(\textbf{0},\textbf{y})$. Moreover we use the $1$-norm as before and get the following upper bound, by using the same considerations about the orientation of $\bar{R}_x$ that result in an intersection with $\bar{R}_0$ and $\bar{R}_y$,  as well as the Potter bounds to obtain 
    \begin{align*}
        c\sum\limits_{k=1}^d\, \prod\limits_{s=1}^{d-k} j_{_{0}}^{_{(s)}}j_{_{2}}^{_{(s)}} &\int\limits_{\mathbb{R}^d\setminus B_{\gamma}(0)} \, \int\limits_{\rot_{\rho_{y,0}}\Bigl( \mathcal{A}_7^{\prime}(j_0,j_2)\Bigr) +0}(|x|_1+|y|)^{-(\alpha_k- \varepsilon+2d-2k)} \,\text{d}\lambda (x)\text{d}\lambda(y) \\
        &\leq c \sum\limits_{k=1}^d\, \prod\limits_{s=1}^{d-k} j_{_{0}}^{_{(s)}}j_{_{2}}^{_{(s)}} \int\limits_{\mathbb{R}^d\setminus B_1(0)} \Bigl(\sum\limits_{i=2}^{d} \min\{j_{_{0}}^{_{(i)}},j_{_{2}}^{_{(i)}}\}+|y|\Bigr)^{-(\alpha_k- \varepsilon+d-2k)}  \text{d}\lambda(y)\\
\end{align*}
  \begin{align*}      
        & \hspace{-40pt}\leq   c\sum\limits_{k=1}^d\, \prod\limits_{s=1}^{d-k} j_{_{0}}^{_{(s)}}j_{_{2}}^{_{(s)}}  \int\limits_{1}^{\infty} r^{d-1}  r^{-(\alpha_k- \varepsilon+d-2k)} \text{d}r.
    \end{align*}
    In the first inequality we integrated over $x$, bounded the integrand from above and increased the integration area of $y$. We see that the last integral is finite since $\alpha_k > 2k$ for $k\in\{2,\dots,d\}$. This yields the following upper bound for $S_7$
    \begin{equation}
        S_7\leq \, cu^2\, \sum\limits_{_{j_{_{0}},j_{_{2}}\in\mathbb{N}^d} }p_{j_{_{0}}}p_{j_{_{2}}}  \prod\limits_{s=1}^{d-1} j_{_{0}}^{_{(s)}}j_{_{2}}^{_{(s)}}.\label{eq:S7}
    \end{equation}
    Putting equations \eqref{eq:S1}, \eqref{eq:S2}, \eqref{eq:S3}, \eqref{eq:S5}, \eqref{eq:S6}, and \eqref{eq:S7} together, we get for $u\in(0,1)$ 
    the bound
    \begin{equation}\begin{split}
        \mathbb{E}_0 \Bigl[\sum\limits_{\textbf{y}\in\mathcal{X}} \mathbbm{1}_{\textbf{0}\overset{2}{\sim} \textbf{y}}\Bigr] \leq cu
        \sum\limits_{_{j_{_{0}},j_{_{2}}\in\mathbb{N}^d} }p_{j_{_{0}}}p_{j_{_{2}}} \biggl(& \,\prod\limits_{i=1}^d j_{_{0}}^{_{(i)}} +\,\prod\limits_{i=1}^d j_{_{2}}^{_{(i)}} +  \prod\limits_{i=1}^d j_{_{0}}^{_{(i)}}\prod\limits_{i=1}^{d-1} j_{_{2}}^{_{(i)}} \\
        &+\prod\limits_{i=1}^d j_{_{2}}^{_{(i)}}\prod\limits_{i=1}^{d-1} j_{_{0}}^{_{(i)}} + 6 \prod\limits_{i=1}^{d-1}j_{_{0}}^{_{(i)}}j_{_{2}}^{_{(i)}}\biggr).
    \end{split}\label{eq:Induction1}\end{equation}
    Since  $\bar{D}^{_{(k)}}\geq 1$ for all $k\in\{1,..,d\}$ we have
    \begin{equation*}
        V^2\geq V:=\prod\limits_{k=1}^d \bar{D}^{_{(k)}}\geq\prod\limits_{k=1}^{d-1} \bar{D}^{_{(k)}}.
    \end{equation*}
     With this we show now that the key bound of this proof is
    \begin{align*}
        \mathbb{E}_0 \Bigl[\sum\limits_{_{\textbf{x}\in\mathcal{X}} } \mathbbm{1}_{\textbf{0} \overset{2n}{\sim}\textbf{x}} \Bigr] &\overset{(IH)}{\leq}u^nc^n   \sum\limits_{_{j_{_{0}},j_{_{2}},\dots,j_{_{2n}}\in\mathbb{N}^d} }\!\!\!p_{j_{_{2n}}}\prod\limits_{k=0}^{n-1} p_{j_{_{2k}}}\biggl( \prod\limits_{i=1}^d j_{_{2k}}^{_{(i)}} +  \prod\limits_{i=1}^d j_{_{2k+2}}^{_{(i)}} +  \prod\limits_{i=1}^d j_{_{2k}}^{_{(i)}}\prod\limits_{i=1}^{d-1} j_{_{2k+2}}^{_{(i)}} \\
        &\hspace{180pt}+\prod\limits_{i=1}^d j_{_{2k+2}}^{_{(i)}}\prod\limits_{i=1}^{d-1} j_{_{2k}}^{_{(i)}} + 6 \prod\limits_{i=1}^{d-1}j_{_{2k}}^{_{(i)}}j_{_{2k+2}}^{_{(i)}} \!\!\biggr) \\
        &\leq  u^n c^n 10^n \mathbb{E}\bigl[V^2\bigr]^{n+1}. 
    \end{align*} 
    Note that the second inequality can easily be checked by using that the diameters of different vertices are iid. For $n=1$, we have just proved the claim in \eqref{eq:Induction1}. We now proceed to show it for $n\geq 2$.\medskip
    
    \textit{Proof of (IH):} 
    We split the expected number of vertices connected to $\textbf{0}$ via the partition \eqref{eq:partition} as we have done for $n=1$ and use induction over $n$. For that let $n\geq 2$. We obtain
    \begin{align*}
       \mathbb{E}_0 \Bigl[\sum\limits_{_{\textbf{x}\in\mathcal{X}}} \mathbbm{1}_{\textbf{0} \overset{2n}{\sim}\textbf{x}} \Bigr] \!&=\!  \, \mathbb{E}_0 \Bigl[\sum\limits_{_{\textbf{x}_2,\textbf{x}_4,\dots,\textbf{x}_{2n}\in\mathcal{X}}}^{\neq}  \prod\limits_{i=1}^n \mathbbm{1}_{\textbf{x}_{2i-2} \overset{2}{\sim}\textbf{x}_{2i}} \Bigr]\\[3mm]  &\hspace{-25pt}\leq\!\smash{\sum\limits_{m=1}^{8}\mathbb{E}_0 \Bigl[\sum\limits_{_{\substack{\textbf{x}_2,\textbf{x}_4,\dots,\textbf{x}_{2n-2}\in\mathcal{X}\\ j_{_{0}},j_{_{2}},\dots,j_{_{2n}}\in\mathbb{N}^d}} }\prod\limits_{i=0}^{n} \mathbbm{1}_{\bar{D}_{{x}_{2i}}=j_{_{2i}}}\mathbbm{1}_{\textbf{x}_{2i-2} \overset{2}{\sim}\textbf{x}_{2i}} 
       \mathbbm{1}_{x_{2n}\not\in2\bar{R}_{{x}_{2n-2}}} \mathbbm{1}_{x_{2n-2}\not\in2\bar{R}_{{x}_{2n}}}}
       \\
       &\hspace{155pt}  \times\mathbbm{1}_{x_{2n-1}\in\mathcal{A}_m(\textbf{x}_{2n-2}, \textbf{x}_{2n})}
        \mathbbm{1}_{\bar{R}_{{x}_{2n-2}}\cap \bar{R}_{{x}_{2n}} = \emptyset}  \Bigr]  \\
        \end{align*}   
    \begin{align*} 
       &\quad+  
       \!\mathbb{E}_0 \Bigl[\sum\limits_{_{\substack{\textbf{x}_2,\textbf{x}_4,\dots,\textbf{x}_{2n-2}\in\mathcal{X}\\ j_{_{0}},j_{_{2}},\dots,j_{_{2n}}\in\mathbb{N}^d}}} \prod\limits_{i=0}^{n} \mathbbm{1}_{\bar{D}_{{x}_{2i}}=j_{_{2i}}}\mathbbm{1}_{\textbf{x}_{2i-2} \overset{2}{\sim}\textbf{x}_{2i}} \mathbbm{1}_{x_{2n}\in2\bar{R}_{{x}_{2n-2}}} \Bigr]  \\
       &\quad+\!\mathbb{E}_0 \Bigl[\sum\limits_{_{\substack{\textbf{x}_2,\textbf{x}_4,\dots,\textbf{x}_{2n-2}\in\mathcal{X}\\ j_{_{0}},j_{_{2}},\dots,j_{_{2n}}\in\mathbb{N}^d}} }\prod\limits_{i=0}^{n} \mathbbm{1}_{\bar{D}_{{x}_{2i}}=j_{_{2i}}}\mathbbm{1}_{\textbf{x}_{2i-2} \overset{2}{\sim}\textbf{x}_{2i}} \mathbbm{1}_{x_{2n-2}\in2\bar{R}_{{x}_{2n}}} \Bigr]  \!=:\! \sum\limits_{m=1}^{10} S_m^{_{(n)}}
    \end{align*}
   Using $\mathbb{P}_{r}$ as shorthand for $\mathbb{P}_{\textbf{0},\textbf{x}_1,\dots,\textbf{x}_r}$, we can write $S_m^{_{(n)}}$, for $m\in\{1,\dots,8\}$, as

   \begin{align*}
        S_m^{_{(n)}}\!=\! \smash{u^{2n}\!\!\!\!\!\!\!\!\!\sum\limits_{_{j_{_{0}},j_{_{2}},\dots,j_{_{2n}}\in\mathbb{N}^d} }\prod\limits_{i=0}^n p_{j_{_{2i}}} \!\!\int\limits_{\mathbb{R}^d}\!\!\dots\!\int\limits_{\mathbb{R}^d}}\! \mathbb{P}_{2n}&\Biggl(\begin{array}{c}\bigcap_{i=1}^{2n} \{\textbf{x}_{i-1} \sim\textbf{x}_{i}\}\cap\{\textbf{x}_{2n-2}\not\sim \textbf{x}_{2n}\}\\ \{x_{2n-2}\not\in2\bar{R}_{x_{2n}}\}\cap\{x_{2n}\not\in2\bar{R}_{x_{2n-2}}\}\end{array}\Biggr) \\
        &\hspace{10pt}\times\hspace{-3pt}\mathbbm{1}_{x_{2n-1}\in\mathcal{A}_m(\textbf{x}_{2n-2}, \textbf{x}_{2n})}\,\text{d}\lambda(x_1) \dots \text{d}\lambda(x_{2n}),
    \end{align*}
    while for $m\in\{9,10\}$ we have 
    
    \begin{align*}
        S_9^{_{(n)}}\!&=\! u^{2n}\hspace{10pt}\smash{\sum\limits_{_{j_{_{0}},j_{_{2}},\dots,j_{_{2n}}\in\mathbb{N}^d} }\prod\limits_{i=0}^n p_{j_{_{2i}}} \!\!\int\limits_{\mathbb{R}^d}\!\!\dots\!\int\limits_{\mathbb{R}^d}}\! \mathbb{P}_{2n}(\bigcap_{i=1}^{2n-2} \{\textbf{x}_{i-1} \sim\textbf{x}_{i}\}\cap\{x_{2n}\in2\bar{R}_{{x}_{2n-2}}\}) \\
        &\hspace{270pt}\text{d}\lambda(x_1) \dots \text{d}\lambda(x_{2n})\\
        \intertext{and}
        S_{10}^{_{(n)}}\!&=\! u^{2n}\hspace{10pt}\smash{\sum\limits_{_{j_{_{0}},j_{_{2}},\dots,j_{_{2n}}\in\mathbb{N}^d} }\prod\limits_{i=0}^n p_{j_{_{2i}}} \!\!\int\limits_{\mathbb{R}^d}\!\!\dots\!\int\limits_{\mathbb{R}^d}\!} \mathbb{P}_{2n}(\bigcap_{i=1}^{2n-2} \{\textbf{x}_{i-1} \sim\textbf{x}_{i}\}\cap\{x_{2n-2}\in2\bar{R}_{{x}_{2n}}\}) \\
        &\hspace{270pt}\text{d}\lambda(x_1) \dots \text{d}\lambda(x_{2n}).
    \end{align*}
    Defining now $(F_{_m}^{_{(n)}})_{m\in\{1,\dots,10\}}$ for $n\in\mathbb{N}$ as 
    \begin{align*}
        F_{1}^{_{(n)}}\!\!&=\!\!\prod\limits_{s=1}^{d-1}j_{_{2n-2}}^{_{(s)}}\prod\limits_{s=1}^{d}j_{_{2_{n}}}^{_{(s)}},\,\,\,\,\,        F_{2}^{_{(n)}}\!\!=\!\!\prod\limits_{s=1}^{d}j_{_{2n-2}}^{_{(s)}}\prod\limits_{s=1}^{d-1}j_{_{2n}}^{_{(s)}},\,\,\,\,\,
        F_{9}^{_{(n)}}\!\!=\!\!\prod\limits_{s=1}^{d}j_{_{2n-2}}^{_{(s)}},\,\,\,\,\, F_{10}^{_{(n)}}\!\!=\!\!\prod\limits_{s=1}^{d}j_{_{2n}}^{_{(s)}},\\
        F_m^{_{(n)}}&= \prod\limits_{s=1}^{d-1}j_{_{2n-2}}^{_{(s)}}\prod\limits_{s=1}^{d-1}j_{_{2n}}^{_{(s)}},\, \text{ with } 3\leq m\leq8,
    \end{align*}
    we claim that $S_m^{_{(n)}}$ can be bounded from above for $m\in\{1,\dots,10\}$ and $n\in\mathbb{N}$ by
    \begin{align*}
        S_m^{_{(n)}}& \leq u^n c^{n} \hspace{-10pt}\sum\limits_{_{j_{_{0}},j_{_{2}},\dots,j_{_{2n}}\in\mathbb{N}^d}}\hspace{-10pt} p_{j_{_{2n}}} p_{j_{_{2n-2}}} \prod\limits_{k=0}^{n-2} p_{j_{_{2k}}}\biggl( \prod\limits_{i=1}^d j_{_{2k}}^{_{(i)}} +  \prod\limits_{i=1}^d j_{_{2k+2}}^{_{(i)}} +  \prod\limits_{i=1}^d j_{_{2k}}^{_{(i)}}\prod\limits_{i=1}^{d-1} j_{_{2k+2}}^{_{(i)}}  \\
        &\hspace{185pt}+\prod\limits_{i=1}^d j_{_{2k+2}}^{_{(i)}}\prod\limits_{i=1}^{d-1} j_{_{2k}}^{_{(i)}} + 6 \prod\limits_{i=1}^{d-1}j_{_{2k}}^{_{(i)}}j_{_{2k+2}}^{_{(i)}} \biggr) F_m^{_{(n)}}.
    \end{align*}
    Observe that the case $n=1$ was shown in \eqref{eq:Induction1}, so it remains to consider $n>1$ and focus on the induction step. We show the claim for $m\in\{1,9,10\}$ and note that the inequalities for $m \in\{2,3,\dots,8\}$ can be proved similarly to $m=1$. We first focus on the integral terms of $S_m^{_{(n)}}$ before looking at the rest. Moreover, we roughly do the following. First, we look in every integral at the last three vertices in the path of length $2n$. We count how many vertices can connect $\bar{R}_{x_{2n-2}}$ with $\bar{R}_{x_{2n}}$ and ignore the rest of the path. After that we integrate over $x_{2n}$ before we move on to the rest by using the induction hypothesis. Note that in the following calculations we use the same consideration for $\textbf{x}_{2n}$, $\textbf{x}_{2n-1}$ and $\textbf{x}_{2n-2}$ as we do for $\textbf{y}$, $\textbf{x}$, and $\textbf{0}$ in the case $n=1$. In addition to that we define similarly to the calculation for $n=1$ the value $\tilde{\gamma}:=2j_{2n-2}^{(d)}$. \smallskip
    
    \underline{$m=1$:} We rewrite the integral of $S_1^{(n)}$ and bound it from above so we get 
    \begin{align*}
    \smash{\int\limits_{\mathbb{R}^d}\dots\hspace{-10pt}\int\limits_{\mathbb{R}^d\setminus B_{\tilde{\gamma}}(x_{2n-2})} 
    \int\limits_{_{\mathcal{A}_{1}(\textbf{x}_{2n-2},\textbf{x}_{2n})}}}&\hspace{-20pt} \mathbb{P}_{2n}\big(\bigcap_{i=1}^{_{2n}} \{\textbf{x}_{i-1} \sim \textbf{x}_{i}\}\cap \{\textbf{x}_{2n-2}\not\sim \textbf{x}_{2n}\}\big)  \text{d}\lambda(x_1) \dots \text{d}\lambda( x_{2n-2}) \\
    &\hspace{200pt}\text{d}\lambda(x_{2n})\text{d}\lambda(x_{2n-1}) \\[5mm]
    &\hspace{-110pt}\leq c \int\limits_{\mathbb{R}^d}\dots\int\limits_{\mathbb{R}^d\setminus B_{\tilde{\gamma}}(x_{2n-2})} \int\limits_{_{\mathcal{A}_{1}(\textbf{x}_{2n-2},\textbf{x}_{2n})}} \hspace{-18pt} \mathbb{P}_{2n}\big(\bigcap_{i=1}^{_{2n-2}} \{\textbf{x}_{i-1} \sim\textbf{x}_{i}\} \cap\{\textbf{x}_{2n-2}\not\sim \textbf{x}_{2n}\}\big) \\ 
    &\hspace{-55pt} \times \sum\limits_{k=1}^d (|x_{2n-1}-x_{2n-2}|+|x_{2n}-x_{2n-1}|) ^{-(\alpha_k- \varepsilon+d-k)} \prod\limits_{s=1}^{d-k} j_{_{2n}}^{_{(s)}}\text{d}\lambda(x_1)\dots \\
    &\hspace{155pt} \text{d}\lambda( x_{2n-2})\text{d}   \lambda(x_{2n})\text{d}\lambda(x_{2n-1}).
    \end{align*}
    The inequality is derived as in the case $n=1$ by using that the connections between $\textbf{x}_{2n}$, $\textbf{x}_{2n-1}$ and $\textbf{x}_{2n-2}$ are conditionally independent of the preceding $2n-3$ vertices. We are now covering the cases where the $k$-th diameter of $\bar{R}_{x_{2n-1}}$ is big enough, i.e. $$\max\{\dist(\bar{R}_{x_{2n}},x_{2n-1}),\dist(\bar{R}_{x_{2n-2}},x_{2n-1})\} \asymp |x_{2n}-x_{2n-1}|+|x_{2n-2}-x_{2n-1}|.$$ Using this and the Potter bounds yields the sum over $k$ and $\alpha_k-\varepsilon$ in the exponent of $|x_{2n-1}-x_{2n-2}|+|x_{2n}-x_{2n-1}|$. We assume also that the orientation of $\bar{R}_{x_{2n}}$ is such that the largest face of this rectangle is perpendicular to the vector $x_{2n}-x_{2n-1}$ to get an upper bound for the set of orientations that result in an intersection of $\bar{R}_{x_{2n-1}}$ with $\bar{R}_{x_{2n}}$. Note that here we again use Remark \ref{rem:winkel}. In the next step we use substitution and  \smash{$\lambda\bigl(\mathcal{A}_1(\textbf{x}_{2n-2},\textbf{x}_{2n})\bigr)=c\prod_{s=1}^{d} j_{_{2n-2}}^{_{(s)}} $} which leads to the following upper bound
    \begin{align*}    
    c&\smash{\int\limits_{\mathbb{R}^d}\dots \hspace{-10pt}\int\limits_{\mathbb{R}^d\setminus B_{\tilde{\gamma}}(x_{2n-2})} \int\limits_{\bigl(\mathcal{A}_{1}(\textbf{x}_{2n-2},\textbf{x}_{2n})-x_{2n-2}\bigr)} } \hspace{-6pt}\mathbb{P}_{2n-2}\Big(\bigcap_{i=1}^{_{2n-2}} \{\textbf{x}_{i-1} \sim\textbf{x}_{i}\}\Big) \\[5pt]
    &\hspace{20pt} \times \sum\limits_{k=1}^d (|\tilde{x}_{2n-1} | + |\tilde{x}_{2n}|) ^{-(\alpha_k- \varepsilon)-d+k} \prod\limits_{s=1}^{d-k}j_{_{2n}}^{_{(s)}} \, \text{d}\lambda(x_1) \dots \text{d}\lambda( x_{2n-2}) \text{d}\lambda(\tilde{x}_{2n})\text{d}\lambda(\tilde{x}_{2n-1}) \\
    &\leq   c \prod\limits_{s=1}^{d-1} j_{_{2n}}^{_{(s)}}\prod\limits_{s=1}^{d} j_{_{2n-2}}^{_{(s)}}  \int\limits_{\mathbb{R}^d}\dots\int\limits_{\mathbb{R}^d} \ \mathbb{P}_{2n}\Big(\bigcap_{i=1}^{_{2n-2}} \{\textbf{x}_{i-1} \sim\textbf{x}_{i}\}\Big)   \,\text{d}\lambda(x_1) \dots \text{d}\lambda( x_{2n-2}).  
    \end{align*}
    Together with the induction hypothesis this leads to the claimed upper bound for $S_1^{(n)}$.\smallskip

    \underline{$m=9$:} To bound the integral of $S_9^{(n)}$ from above we count every vertex inside $2\bar{R}_{x_{2n-2}}$, i.e. we assume that every vertex in this set is connected to $\textbf{x}_{2n-2}$ via a path of length~$2$. Therefore the integral of $S_9^{(n)}$ can be bounded from above via
    \begin{align*}
 \int\limits_{\mathbb{R}^d}\dots\int\limits_{\mathbb{R}^d}&\mathbb{P}_{2n-2}\Big(\bigcap_{i=1}^{_{2n-2}} \{\textbf{x}_{i-1} \sim\textbf{x}_{i}\}\cap\{x_{2n}\in2\bar{R}_{x_{2n-2}}\}\Big)   \text{d}\lambda(x_1) \dots \text{d}\lambda(x_{2n-2}) \text{d}\lambda(x_{2n}) 
 \end{align*}
  \begin{align*}      
        &\leq c\prod\limits_{s=1}^d j_{_{2n-2}}^{_{(s)}}
        \int\limits_{\mathbb{R}^d}\dots\int\limits_{\mathbb{R}^d} \mathbb{P}_{2n-2}\Big(\bigcap_{i=1}^{_{2n-2}} \{\textbf{x}_{i-1} \sim\textbf{x}_{i}\}\Big)  \text{d}\lambda(x_1) \dots \text{d}\lambda(x_{2n-2}).
        \end{align*}
        Using now the induction hypothesis leads the required upper bound
        \begin{align*}
        S_{9}^{_{(n)}} &\leq  u^nc^{n} \sum\limits_{_{j_{_{0}},j_{_{2}},\dots,j_{_{2n}}\in\mathbb{N}^d}} p_{j_{_{2n}}}p_{j_{_{2n-2}}} \prod\limits_{k=0}^{n-2} p_{j_{_{2k}}}\biggl( \prod\limits_{i=0}^d j_{_{2k}}^{_{(i)}} +  \prod\limits_{i=1}^d j_{_{2k+2}}^{_{(i)}} +  \prod\limits_{i=1}^d j_{_{2k}}^{_{(i)}}\prod\limits_{i=1}^{d-1} j_{_{2k+2}}^{_{(i)}} \\
        &\hspace{178pt}+\prod\limits_{i=1}^d j_{_{2k+2}}^{_{(i)}}\prod\limits_{i=1}^{d-1} j_{_{2k}}^{_{(i)}} + 6 \prod\limits_{i=1}^{d-1}j_{_{2k}}^{_{(i)}}j_{_{2k+2}}^{_{(i)}} \biggr)  \prod\limits_{s=1}^d j_{_{2n-2}}^{_{(s)}} .
    \end{align*}
    
    \underline{$m=10$:} Except for a single step the calculation is similar to the case $m=9$. Looking at the integral of $S_{10}^{(n)}$ we want to count every vertex $\textbf{x}_{2n}$ such that $x_{2n-2}\in2\bar{R}_{x_{2n}}$. We therefore assume that each such vertex is connected to $\textbf{x}_{2n-2}$ via a path of length~$2$. In the first step we rewrite the integral by using the distributional symmetry of $\bar{R}$. So we get for the integral of $S_{10}^{(n)}$ the following
        \begin{align*}
        \int\limits_{\mathbb{R}^d}\dots\int\limits_{\mathbb{R}^d} &\mathbb{P}_{2n}\Big(\bigcap_{i=1}^{2n-2} \{\textbf{x}_{i-1}\sim\textbf{x}_{i}\}\cap\{x_{2n-2}\in 2\bar{R}_{{x}_{2n}}\}\Big)  \, \text{d}\lambda(x_1) \dots \text{d}\lambda(x_{2n})\\
        &\leq c\prod\limits_{s=1}^d j_{_{2n-2}}^{_{(s)}}\int\limits_{\mathbb{R}^d}\dots\int\limits_{\mathbb{R}^d} \mathbb{P}_{2n-2}\Big(\bigcap_{i=1}^{_{2n-2}} \{\textbf{x}_{i-1} \sim\textbf{x}_{i}\}\Big)   \,\text{d}\lambda(x_1) \dots \text{d}\lambda(x_{2n-2}).
        \end{align*}
        The inequality follows from the distributional symmetry of $\bar{R}$. 
        Using now the induction hypothesis leads to the claimed upper bound
    \begin{align*}
        S_{10}^{(n)}&\leq   u^n c^{n} \hspace{-15pt}\sum\limits_{_{j_{_{0}},j_{_{2}},\dots,j_{_{2n}}\in\mathbb{N}^d}} p_{j_{_{2n}}}p_{j_{_{2n-2}}} \prod\limits_{k=0}^{n-2} p_{j_{_{2k}}} \biggl(\, \prod\limits_{i=0}^d j_{_{2k}}^{_{(i)}} +  \prod\limits_{i=1}^d j_{_{2k+2}}^{_{(i)}} +  \prod\limits_{i=1}^d j_{_{2k}}^{_{(i)}}\prod\limits_{i=1}^{d-1} j_{_{2k+2}}^{_{(i)}} \\
        &\hspace{180pt}+\prod\limits_{i=1}^d j_{_{2k+2}}^{_{(i)}}\prod\limits_{i=1}^{d-1} j_{_{2k}}^{_{(i)}} + 6 \prod\limits_{i=1}^{d-1}j_{_{2k}}^{_{(i)}}j_{_{2k+2}}^{_{(i)}} \biggr) \prod\limits_{s=1}^d j_{_{2n-2}}^{_{(s)}}  .
    \end{align*}
    Putting everything together, this gives us an upper bound for the expected number of vertices that are connected via a path of length~$2n$ to the origin, for $n\in\mathbb{N}$, that is
    \begin{align*}
        \mathbb{E}_0 \Bigl[\sum\limits_{\textbf{x}\in\mathcal{X}} \mathbbm{1}_{\textbf{0} \overset{2n}{\sim}\textbf{x}} \Bigr] &\leq   u^n c^{n}\sum\limits_{_{j_{_{0}},j_{_{2}},\dots,j_{_{2n}}\in\mathbb{N}^d}} p_{j_{_{2n}}} \prod\limits_{k=0}^{n-1} p_{j_{_{2k}}}\biggl( \,\prod\limits_{i=1}^d j_{_{2k}}^{_{(i)}} +  \prod\limits_{i=1}^d j_{_{2k+2}}^{_{(i)}} +  \prod\limits_{i=1}^d j_{_{2k}}^{_{(i)}}\prod\limits_{i=1}^{d-1} j_{_{2k+2}}^{_{(i)}} \\
        &\hspace{170pt}+\prod\limits_{i=1}^d j_{_{2k+2}}^{_{(i)}}\prod\limits_{i=1}^{d-1} j_{_{2k}}^{_{(i)}} + 6 \prod\limits_{i=1}^{d-1}j_{_{2k}}^{_{(i)}}j_{_{2k+2}}^{_{(i)}} \!\!\biggr).
    \end{align*}
    It remains to show that there exists a suitable upper bound also for the expected number of vertices with distance $2n+1$ to the origin, for $n\in\mathbb{N}$. We have 
    \begin{align*}
        \mathbb{E}_0 \Bigl[\sum\limits_{\textbf{x}\in\mathcal{X}} \mathbbm{1}_{\textbf{0} \overset{2n+1}{\sim}\textbf{x}} \Bigr]
        &=  \mathbb{E}_0 \Bigl[\sum\limits_{_{\substack{\textbf{x}_2,\textbf{x}_4,\dots,\\\textbf{x}_{2n},\textbf{x}_{2n+1}\in\mathcal{X}}}}^{\neq} \prod\limits_{i=1}^n \mathbbm{1}_{\textbf{x}_{2i-2} \overset{2}{\sim}\textbf{x}_{2i}}  \mathbbm{1}_{x_{2n+1}\in  2\bar{R}_{{x}_{2n}}}\Bigr] \\
        \end{align*}
  \begin{align*}  
        & \quad+\mathbb{E}_0 \Bigl[\sum\limits_{_{\substack{\textbf{x}_2,\textbf{x}_4,\dots,\\\textbf{x}_{2n},\textbf{x}_{2n+1}\in\mathcal{X}}}}^{\neq} \prod\limits_{i=1}^n \mathbbm{1}_{\textbf{x}_{2i-2} \overset{2}{\sim}\textbf{x}_{2i}}  \mathbbm{1}_{x_{2n}\in  2\bar{R}_{{x}_{2n+1}}}\Bigr] \\ 
        & \quad+ \mathbb{E}_0 \Bigl[\sum\limits_{_{\substack{\textbf{x}_2,\textbf{x}_4,\dots,\\\textbf{x}_{2n},\textbf{x}_{2n+1}\in\mathcal{X}}}}^{\neq} \prod\limits_{i=1}^n \mathbbm{1}_{\textbf{x}_{2i-2} \overset{2}{\sim}\textbf{x}_{2i}}  \mathbbm{1}_{x_{2n}\not\in  2\bar{R}_{{x}_{2n+1}}} \mathbbm{1}_{x_{2n+1}\not\in  2\bar{R}_{{x}_{2n}}}\Bigr]. 
    \end{align*}
    To find an upper bound for this expression we are again looking at the individual summands. The first summand can be rewritten as
    \begin{align*} 
        u^{2n+1} \!\!\!\!\!\!\!\!\!&\smash{\sum\limits_{_{\substack{j_{_{0}},j_{_{2}},\dots,\\j_{_{2n}},j_{_{2n+1}}\in\mathbb{N}^d}}} \prod\limits_{i=0}^{n+1} p_{j_{_{2i}}} \int\limits_{\mathbb{R}^d}\dots\int\limits_{\mathbb{R}^d}}\int\limits_{\bar{R}_{{x}_{2n}}} \hspace{-7pt}\mathbb{P}_{2n}\Big(\cap_{k=1}^n \{\textbf{x}_{2(k-1)} \overset{2}{\sim}\textbf{x}_{2k}\}\Big) \text{d}\lambda(x_1) \dots \text{d}\lambda (x_{2n+1}) \\
        &\leq  \, c^{n+1} u^{n+1} \!\!\!\!\!\!\!\!\!\smash{\sum\limits_{_{\substack{j_{_{0}},j_{_{2}},\dots,\\j_{_{2n}},j_{_{2n+1}}\in\mathbb{N}^d}} }}
        \prod\limits_{s=1}^d j_{_{2n}}^{_{(s)}} p_{j_{_{2n+1}}} p_{j_{_{2n}}}\prod\limits_{k=0}^{n-1} p_{j_{_{2k}}}\biggl( \prod\limits_{i=1}^d j_{_{2k}}^{_{(i)}} +  \prod\limits_{i=1}^d j_{_{2k+2}}^{_{(i)}} +\prod\limits_{i=1}^d j_{_{2k}}^{_{(i)}}\prod\limits_{i=1}^{d-1} j_{_{2k+2}}^{_{(i)}} \\
        &\hspace{230pt}+\prod\limits_{i=1}^d j_{_{2k+2}}^{_{(i)}}\prod\limits_{i=1}^{d-1} j_{_{2k}}^{_{(i)}} + 6 \prod\limits_{i=1}^{d-1}j_{_{2k}}^{_{(i)}}j_{_{2k+2}}^{_{(i)}} \biggr).
    \end{align*}
    We obtain the inequality by first using that \smash{$\lambda(\bar{R}_{x_{2n}}) = c\prod_{s=1}^d j_{2n}^{(s)}$} and then the previously shown result for the expected number of vertices that are connected via a path of length~$2n$ from the origin. Similar to the previous calculations we get by using the distributional symmetry of $\bar{R}$ for the second summand
     \begin{align*}
        \mathbb{E}_0 &\Bigl[\sum\limits_{_{\substack{\textbf{x}_2,\textbf{x}_4,\dots,\\\textbf{x}_{2n},\textbf{x}_{2n+1}\in\mathcal{X}}}}^{\neq} \prod\limits_{i=1}^n \mathbbm{1}_{\textbf{x}_{2i-2} \overset{2}{\sim}\textbf{x}_{2i}}  \mathbbm{1}_{x_{2n}\in  2\bar{R}_{{x}_{2n+1}}}\Bigr] \\
        &= \mathbb{E}_0 \Bigl[\sum\limits_{_{\substack{\textbf{x}_2,\textbf{x}_4,\dots,\\\textbf{x}_{2n},\textbf{x}_{2n+1}\in\mathcal{X}}}}^{\neq} \prod\limits_{i=1}^n \mathbbm{1}_{\textbf{x}_{2i-2} \overset{2}{\sim}\textbf{x}_{2i}}  \mathbbm{1}_{x_{2n+1}\in  (2\bar{R}_{{x}_{2n+1}} - x_{2n+1} + x_{2n}) }\Bigr]\\
        &\leq   c^{n+1} u^{n+1} \!\!\!\!\!\!\!\!\!\smash{\sum\limits_{_{\substack{j_{_{0}},j_{_{2}},\dots,\\j_{_{2n}},j_{_{2n+1}}\in\mathbb{N}^d}} }}
        \prod\limits_{s=1}^d j_{_{2n+1}}^{_{(s)}} p_{j_{_{2n+1}}} p_{j_{_{2n}}} \prod\limits_{k=0}^{n-1} p_{j_{_{2k}}}\biggl( \prod\limits_{i=1}^d j_{_{2k}}^{_{(i)}} +  \prod\limits_{i=1}^d j_{_{2k+2}}^{_{(i)}} +  \prod\limits_{i=1}^d j_{_{2k}}^{_{(i)}}\prod\limits_{i=1}^{d-1} j_{_{2k+2}}^{_{(i)}} \\
        &\hspace{230pt}+\prod\limits_{i=1}^d j_{_{2k+2}}^{_{(i)}}\prod\limits_{i=1}^{d-1} j_{_{2k}}^{_{(i)}} + 6 \prod\limits_{i=1}^{d-1}j_{_{2k}}^{_{(i)}}j_{_{2k+2}}^{_{(i)}} \biggr).
    \end{align*}
    Looking now at the last summand we get
    \begin{align*}
         \mathbb{E}_0 &\Bigl[\sum\limits_{_{\textbf{x}_2,\textbf{x}_4,\dots,\textbf{x}_{2n}\in\mathcal{X}}}^{\neq} \prod\limits_{i=1}^n \mathbbm{1}_{\textbf{x}_{2i-2} \overset{2}{\sim}\textbf{x}_{2i}}  \mathbbm{1}_{x_{2n}\not\in  2\bar{R}_{{x}_{2n+1}}} \mathbbm{1}_{x_{2n+1}\not\in  2\bar{R}_{{x}_{2n}}}\Bigr]\\
        &= u^{2n+1} \smash{\sum\limits_{_{j_{_{0}},j_{_{2}},\dots,j_{_{2n}}\in\mathbb{N}^d} }
        p_{j_{_{2n+1}}}\prod\limits_{m=0}^{n}p_{j_{_{2m}}}  \int\limits_{\mathbb{R}^d} \dots  \int\limits_{\mathbb{R}^d}} \mathbb{P}_{2n+1}\Big(\cap_{i=1}^{_{2n+1}} \{\textbf{x}_{i-1} \sim\textbf{x}_{i}\} \Big)\mathbbm{1}_{x_{2n}\not\in  2\bar{R}_{{x}_{2n+1}}} \\
        &\hspace{220pt} \times \mathbbm{1}_{x_{2n+1}\not\in  2\bar{R}_{{x}_{2n}}} \,\text{d} \lambda (x_{1}) \dots\text{d}\lambda(x_{2n+1})  \\
        &\leq  c u^{2n+1} \hspace{-10pt}\sum\limits_{_{j_{_{0}},j_{_{2}},\dots,j_{_{2n}}\in\mathbb{N}^d} }
        p_{j_{_{2n+1}}}\prod\limits_{m=0}^{n}p_{j_{_{2m}}} \int\limits_{\mathbb{R}^d} \dots  \int\limits_{\mathbb{R}^d}\int\limits_{B_1(x_{2n})} \hspace{-5pt} \mathbb{P}_{2n}\Big(\cap_{i=1}^{2n} \{\textbf{x}_{i-1} \sim\textbf{x}_{i}\} \Big) \\[7pt]
        &\hspace{80pt}\times\smash{\sum\limits_{k=1}^d (|x_{2n} - x_{2n+1}| )^{-(\alpha_k- \varepsilon)}  \prod\limits_{s=1}^{d-k} \frac{j_{_{2n}}^{_{(s)}}}{|x_{2n} - x_{2n+1}|} \,\text{d} \lambda (x_{1}) \dots\text{d}\lambda(x_{2n+1})}
        \end{align*}
        We get the inequality since we first start again by looking at the last connection of the path. We therefore first count how many vertices are connected to $\textbf{x}_{2n}$ by an edge. As before we look at the different cases that for $k\in\{1,...d\}$ the $k$-th diameter of $\bar{R}_{x_{2n+1}}$ is big enough so that an intersection of $\bar{R}_{x_{2n+1}}$ and $\bar{R}_{x_{2n}}$ is possible. For that we use similar considerations as before, namely
        \begin{equation*}
            \dist(\bar{R}_{x_{2n}},x_{2n+1}) \asymp |x_{2n} - x_{2n+1}|.
        \end{equation*}
        This leads to the sum with the index $k$ and to the term with the exponent $-(\alpha_k-\varepsilon)$. Moreover we bound the $(d-1)$ dimensional Lebesgue measure of the set of orientations that results in an intersection from above by assuming again that the $\bar{R}_{x_{2n}}$ is oriented in such a way that the largest face of it is perpendicular to the orientation of ${x_{2n+1}-x_{2n}}$. This and the use of Remark~\ref{rem:winkel} leads to the product term. Using now substitution we can rewrite the last expression to 
        \begin{align*}
        c u^{2n+1} \hspace{-10pt}&\sum\limits_{_{j_{_{0}},j_{_{2}},\dots,j_{_{2n}}\in\mathbb{N}^d} }
        p_{j_{_{2n+1}}}\prod\limits_{m=0}^{n}p_{j_{_{2m}}} \prod\limits_{s=1}^{d-1} j_{_{2n}}^{_{(s)}}  \int\limits_{\mathbb{R}^d} \dots  \int\limits_{B_1(0)}  \mathbb{P}_{2n}\Big(\cap_{i=1}^{2n} \{\textbf{x}_{i-1} \sim\textbf{x}_{i}\} \Big) \\
        &\hspace{170pt}\times\sum\limits_{k=1}^d (|\tilde{x}_{2n+1}| )^{-(\alpha_k- \varepsilon +d-k)} \,\text{d} \lambda (x_{1}) \dots\text{d}\lambda(\tilde{x}_{2n+1})\\
        &\leq cu^{2n+1} p_{j_{_{2n+1}}}\prod\limits_{m=0}^{n}p_{j_{_{2m}}}\prod\limits_{s=1}^{d-1} j_{_{2n}}^{_{(s)}} \int\limits_{\mathbb{R}^d}\dots\int\limits_{\mathbb{R}^d}\int\limits_{1}^{\infty}\mathbb{P}_{2n}\Big(\cap_{i=1}^{2n} \{\textbf{x}_{i-1} \sim\textbf{x}_{i}\} \Big) \\
        &\hspace{180pt}\times\sum\limits_{k=1}^d r^{-(\alpha_k- \varepsilon +d-k)+d-1}\,\text{d} \lambda (x_{1}) \dots\text{d}\lambda({x}_{2n}) \text{d}r\\
        &\leq c u^{2n+1} p_{j_{_{2n+1}}}\prod\limits_{m=0}^{n}p_{j_{_{2m}}}\prod\limits_{s=1}^{d-1} j_{_{2n}}^{_{(s)}} \int\limits_{\mathbb{R}^d}\dots\int\limits_{\mathbb{R}^d}\mathbb{P}_{2n}\Big(\cap_{i=1}^{2n} \{\textbf{x}_{i-1} \sim\textbf{x}_{i}\} \Big)  \,\text{d} \lambda (x_{1}) \dots\text{d}\lambda({x}_{2n})
        \end{align*}
        In the first inequality we use polar coordinates and in the second inequality we integrate over $r$, which leads to a finite term since $\alpha_k>2k$ for all $k\in\{1,...,d\}$. Using now \textit{(IH)} leads to the following upper bound
        \begin{align*}
        u^{n+1}  c^{n+1}\sum\limits_{_{j_{_{0}},j_{_{2}},\dots,j_{_{2n}}\in\mathbb{N}^d} }p_{j_{_{2n}}} \prod\limits_{s=1}^{d} j_{_{2n}}^{_{(s)}}\prod\limits_{k=0}^{n-1} p_{j_{_{2k}}}\biggl( \prod\limits_{i=1}^d j_{_{2k}}^{_{(i)}} +&  \prod\limits_{i=1}^d j_{_{2k+2}}^{_{(i)}} +  \prod\limits_{i=1}^d j_{_{2k}}^{_{(i)}}\prod\limits_{i=1}^{d-1} j_{_{2k+2}}^{_{(i)}} \\
        &+\prod\limits_{i=1}^d j_{_{2k+2}}^{_{(i)}}\prod\limits_{i=1}^{d-1} j_{_{2k}}^{_{(i)}} + 6 \prod\limits_{i=1}^{d-1}j_{_{2k}}^{_{(i)}}j_{_{2k+2}}^{_{(i)}} \biggr). 
    \end{align*}
    All together this yields the upper bound for the expected number of vertices that are connected to the origin via a path of length $2n+1$ 
    \begin{equation*}
                \mathbb{E}_0 \Bigl[\sum\limits_{\textbf{x}\in\mathcal{X}} \mathbbm{1}_{\textbf{0} \overset{2n+1}{\sim}\textbf{x}} \Bigr]
                \leq u^{n+1}c^{n+1}10^{n+1} \mathbb{E}\bigl[V^2\bigr]^{n+2}
    \end{equation*}
    Choosing now $u< (10c\mathbb{E}[V^2])^{-2}$
    gives us $
        \sum\limits_{n\in\mathbb{N}} \mathbb{E}_0\Bigl[\sum\limits_{\textbf{x}\in\mathcal{X}} \mathbbm{1}_{\textbf{0}\overset{n}{\sim} \textbf{x}}\Bigr] < \infty$,
    as desired.\smallskip
    
    We therefore have non-robustness if the second moment of $\vol(C)$ is finite and $\alpha_k >2k$ for all $k\in\{1,\dots,d\}$. 
    The second criterion of Theorem \ref{two} (b) is true by dominating the Poisson-Boolean base model with a Boolean model with balls of radius $D^{_{(1)}}$.
    \end{proof}
    \section{Examples}
    In this section we take a closer look at the examples presented in Section~1 and identify explicitly the parameter regimes given by Theorem \ref{two}. As we will see, the criteria in the theorem are sharp for some of the examples, but not all.
    \subsection{Ellipsoids with long and short axes}
    We consider the case with $m$ short axes of length one and $d-m$ long axes of length~$R$. As stated in Proposition \ref{one} it is known that the model is dense if the volume of the convex body has infinite expectation. This is equivalent to requiring that $R^{d-m}$ has infinite expectation which is guaranteed if $\alpha < d-m$. \smallskip
    
    Looking now at the first part of Theorem \ref{two} we see that this is equivalent to $\alpha < \min \{2(d-m), d\}$. This is clear by looking at the tail indices of the distribution of the diameters. Let $k\leq d-m$. We have for $x>0$ and $k\leq d-m$ that
    \begin{equation*}
        \mathbb{P}(D^{_{(k)}} \geq x) = \mathbb{P}(R \geq x),
    \end{equation*}
    i.e. $D^{{(k)}}$ has a regularly varying tail distribution with index $-\alpha$. For $k> d-m$ we have $D^{{(k)}}=1$ a.s. So $D^{{(k)}}$ has a tail distribution with index $-\infty$ for $k>d-m$.  If we are now asking for $\alpha_k< \min\{2k,d\}$ for some $k\in \{1,\dots,d\}$ one can see that this is the same as requiring $\alpha < 2(d-m)$ if $d-m \leq m $ or $\alpha < d$ if $d-m>d/2$. This is equivalent to $\alpha < \min\{2(d-m),d\}$.\smallskip
    
    The last criterion of Theorem \ref{two}, namely that the second moment of the volume exists or that the diameter i.e. $D^{_{(1)}}$ is in $\mathcal{L}^d$ gives us $\alpha > 2(d-m)$ in order to have finite second moment of the volume and $\alpha>d$ to ensure $D^{_{(1)}}\in\mathcal{L}^d$. In other words, we require $\alpha >  \min\{2(d-m), d\}$. {Note that this inequality being satisfied also implies the condition on the $\alpha_k$ for $k\in\{1,\dots,d\}$ as required in the second part of Theorem \ref{two}} Ignoring the boundary cases, we see that our criterion is sharp.
    \subsection{Ellipsoids with independent axes}
    In this setting we have random ellipsoids which are generated via independent random radii $R_1,\dots,R_d$ with regularly varying tail indices $-\beta_1,\dots,-\beta_d$. Without loss of generality we have $\beta_i \leq \beta_{i+1}$ for all $i\in\{1,\dots,d-1\}$. 
    We are now interested in the tail indices of the diameters $D^{_{(1)}},\dots,D^{_{(d)}}$. For $x>0$ we have
    \begin{equation*}
        \mathbb{P}(D^{_{(1)}}\geq x) = 1-\mathbb{P}(D^{_{(1)}} < x) = 1- \prod\limits_{k=1}^d (1- \mathbb{P}(R_k \geq x))
    \end{equation*}
    by using the independence of the radii. Using now the calculation rule of regularly varying functions (see for example \cite{Bingham}) 
    we have that $D^{_{(1)}}$ is regularly varying with index $\beta_1$. Using again the independence of the radii and considering the event $\{D^{_{(k)}}\geq x\}$ leads us to the tail indices of the other diameters; more precisely $D^{_{(k)}}$ is regularly varying with tail index $\sum_{i=1}^k \beta_i$. 
    Using Theorem \ref{two} we get that the grain distribution is sparse if $\vol(C) \in \mathcal{L}^1$. In our case we have due to independence 
    \begin{equation*}
        \mathbb{E}[\vol(C)] = \prod\limits_{i=1}^d \mathbb{E}[R_i],
    \end{equation*}
    which is finite if $\min\limits_{i=1}^d \beta_i >1$. \pagebreak[3]
    
    The first criterion of Theorem \ref{two} gives the condition for the $\alpha_k:= \sum_{i=1}^k \beta_i$ to ensure robustness. This can be equivalently stated as
    \begin{enumerate}[(i)]
        \item There exists $1\leq k \leq d$ such that $\alpha_k < \min\{2k,d\}$.
        \item There exists $1\leq i\leq d$ such that $\beta_i <2$.
    \end{enumerate}
    (ii) follows from (i) due to the fact that if $\beta_i >2$ for all $i$ then we have $\alpha_k = \sum_{i=1}^k \beta_i> 2k $ for every $k\in\{1,\dots,d\}$. The other direction is also true. If $\alpha_k \geq \min\{2k,d\}$ for every $k\in\{1,\dots,d\}$ we have in particular that $\beta_1 = \alpha_1 \geq 2$. Due to $\beta_{i+1}\geq \beta_i$ we have $\beta_i \geq2 $ for every $i\in\{1,\dots,d\}$.\smallskip
    
    The last criterion of the theorem is equivalent to $\beta_i>2$ for every $i\in\{1,\dots,d\}$. If $\beta_i>2$ then the volume of $C$ has finite second moment and $\sum_{i=1}^k \beta_i > 2k$ for every $k$. And if the second moment of the volume is finite then we have $\beta_i >2$ for every $i\in\{1,\dots,d\}$ by leaving out the boundary case. The condition on the maximal diameter does not apply here, since $D^{_{(1)}}\in\mathcal{L}^d$ if $\beta_1 >d$, so $\beta_1>2$ also holds. 
    \subsection{Ellipsoids with strongly dependent axes}
    In this model we have ellipsoids with axes of length $U^{-\beta_k}$ with $k\in\{1,\dots,d\}$, $U$ uniformly distributed on $(0,1)$, and $0\leq \beta_1\leq ..\leq \beta_d$. For $x>1$ we get that
    \begin{equation*}
        \mathbb{P}(U^{-\beta_k} \geq x) = \mathbb{P}( U < x^{-1/\beta_k})= x^{-\frac{1}{\beta_k}},
    \end{equation*}
    i.e.\ the diameter $D^{_{(k)}}$ has regularly varying tail distribution with index $-1/\beta_{d-k+1}$ for $k\in\{1,\dots,d\}$.    
    By the criterion of Proposition \ref{one} we see that the model is sparse if $\sum_{i=1}^d \beta_{i} <1$. This is true due to the fact that $\vol(C)$ has regularly varying tail distribution with index $( \sum_{i=1}^d \beta_i )^{-1}$. \smallskip
    
    The robustness criterion of Theorem \ref{two}, namely $\alpha_k < \min\{2k,d\}$ leads to the condition $\beta_{d-k+1} > \max\{\frac{1}{2k},\frac{1}{d}\}$. Furthermore, the criterion for non-robustness is given by the condition $\alpha_k > 2k$ for every $k$, which leads to $\frac{1}{2k} > \beta_{d-k+1}$ for every $k$. 
    
    \subsection{Random triangles}
    By our triangle construction $D^{_{(1)}}$ is the length of the hypotenuse, i.e. $D^{_{(1)}}=R$ is regularly varying with tail index $-\alpha=:-\alpha_1$. As the volume of a triangle is half of the product of one side and the corresponding height and the height orthogonal to the hypotenuse is the second diameter, we get
    \begin{equation}
        \lambda(C) =\tfrac14 R^{1+\beta}= \tfrac12D^{_{(1)}}D^{_{(2)}},
    \end{equation}
    so that $D^{_{(2)}}=\frac12 R^{\beta}$. Our proposition states that the model is sparse if $\lambda(C)\in\mathcal{L}^1$. In this example this is the case whenever $\alpha > \beta+1$. To obtain robustness, it suffices that $D^{_{(1)}}\not\in\mathcal{L}^2$, for which $\alpha <2$ is sufficient, and that there exists  $k\in\{1,2\}$  such that $\alpha_k<2k$, which also holds if $\alpha <2$ as $\alpha_1=\alpha$. To obtain non-robustness, it suffices that the second moment of the volume is finite and the condition of the tail indices is fulfilled, which holds if $\alpha > 2+2\beta$,  
    or that the second moment of the diameter is finite, which holds if $\alpha >2$. This leads to non-robustness if $\alpha>2$.

\end{document}